\documentclass[hidelinks,onefignum,onetabnum]{siamart251216}

\usepackage{graphicx,color}
\usepackage{amssymb,amsfonts}
\usepackage{mathrsfs}
\usepackage{subfigure}
\usepackage{url}
\usepackage{booktabs}
\usepackage{array}
\usepackage{xcolor}
\usepackage{mathtools} 		
\usepackage{upgreek}
\usepackage{dsfont}
\usepackage{calc}
\usepackage[shortlabels]{enumitem}
\definecolor{lightblue}{RGB}{90,170,255}
\definecolor{myblue}{RGB}{0,90,160}
\usepackage{textcomp}
\usepackage{algorithm}
\usepackage{algpseudocode}
\usepackage{aliascnt}
\usepackage{makecell} 

\graphicspath{{./}{./images/}}

\def\BibTeX{{\rm B\kern-.05em{\sc i\kern-.025em b}\kern-.08em
    T\kern-.1667em\lower.7ex\hbox{E}\kern-.125emX}}

\headers{Direct Search Methods for Online Nonconvex Optimization}{G. Robert and G. Bianchin}

\newsiamthm{problem}{Problem}
\newsiamremark{remark}{Remark}
\newsiamremark{assumption}{Assumption}
\newsiamremark{example}{Example}

\crefname{theorem}{Theorem}{Theorems}
\Crefname{theorem}{Theorem}{Theorems}
\crefname{lemma}{Lemma}{Lemmas}
\Crefname{lemma}{Lemma}{Lemmas}
\crefname{definition}{Definition}{Definitions}
\Crefname{definition}{Definition}{Definitions}
\crefname{corollary}{Corollary}{Corollaries}
\Crefname{corollary}{Corollary}{Corollaries}
\crefname{proposition}{Proposition}{Propositions}
\Crefname{proposition}{Proposition}{Propositions}
\crefname{problem}{Problem}{Problems}
\Crefname{problem}{Problem}{Problems}
\crefname{remark}{Remark}{Remarks}
\Crefname{remark}{Remark}{Remarks}
\crefname{assumption}{Assumption}{Assumptions}
\Crefname{assumption}{Assumption}{Assumptions}
\crefname{example}{Example}{Examples}
\Crefname{example}{Example}{Examples}
\crefname{equation}{equation}{equations}
\Crefname{equation}{Equation}{Equations}
\crefname{figure}{Fig.}{Figs.}
\Crefname{figure}{Fig.}{Figs.}
\crefname{table}{Table}{Tables}
\Crefname{table}{Table}{Tables}
\crefname{algorithm}{Algorithm}{Algorithms}
\Crefname{algorithm}{Algorithm}{Algorithms}
\crefname{section}{Section}{Sections}
\Crefname{section}{Section}{Sections}

\newcommand{\mc}{\mathcal}

\newcommand{\real}{\mathbb{R}}

\newcommand{\naturalpos}{\mathbb{N}_{>0}}
\newcommand{\naturalnneg}{\mathbb{N}_{\geq 0}}

\newcommand*{\QEDB}{\hfill\ensuremath{\square}}

\newcommand*{\QEDBL}{\hfill\ensuremath{\blacksquare}}

\usepackage{pifont} 
\renewcommand{\ast}{\text{\ding{86}}}

\newcommand{\evenset}[1]{[#1]_{\mathrm{2}}}
\newcommand{\Neventwo}{\mathbb{N}^{\mathrm{even}}_{\ge 2}}
\newcommand{\Nevennneg}{\naturalnneg^{\mathrm{even}}}

\DeclareMathAlphabet{\mymathbb}{U}{BOONDOX-ds}{m}{n}

\usepackage{colonequals}

\def\defeq{\colonequals}

\newcommand{\gbmargin}[1]{}
\newcommand{\blue}[1]{#1}

\newcounter{cond}

\title{Direct Search Methods for Online Nonconvex Optimization under Inexact Bandit Feedback%
\thanks{This work was supported in part by the FRFS WEL-T Investigator Programme under Grant WELT-X.8004.25 and in part by the ARC under Grant 24/29-140.}}

\author{Gaspar Robert\thanks{ICTEAM Institute, Universit\'e 
catholique de Louvain,
Belgium (\email{gaspar.robert@uclouvain.be}).}
\and Gianluca Bianchin\thanks{ICTEAM Institute, Universit\'e catholique de Louvain, WEL Research Institute,
Belgium (\email{gianluca.bianchin@uclouvain.be}).}}

\hypersetup{
  pdftitle    = {Direct Search Methods for Online Nonconvex Optimization under
                 Inexact Bandit Feedback},
  pdfauthor   = {Gaspar Robert and Gianluca Bianchin},
  pdfsubject  = {Time-varying optimization under inexact zeroth-order (bandit)
                 feedback: a randomized two-point direct-search algorithm with
                 iteration-complexity guarantees, and its application to online
                 optimal equilibrium selection in control systems},
  pdfkeywords = {bandit optimization; time-varying optimization; direct-search
                 methods; online nonconvex optimization; feedback optimization;
                 zeroth-order optimization; 90C26; 90C56; 90C60; 93C40},
  pdfcreator  = {LaTeX with hyperref},
}

\begin{document}

\maketitle

\begin{abstract}
Optimization under zeroth-order (i.e., bandit) feedback is central to many 
engineering problems where the analytic forms of objectives and/or 
constraints are unavailable. In modern applications, such as online control 
and online learning, optimization problems often evolve with time, requiring 
adaptive optimization methodologies. 
Yet, existing methods in this setting are largely confined to 
adaptations of methodologies developed for time-invariant or first-order 
optimization, and thus often rely on gradient surrogates that fail to fully 
exploit the zeroth-order structure of the available information.
In this paper, we propose a randomized two-point direct-search algorithm for nonconvex time-varying optimization and derive iteration-complexity bounds under both constant and diminishing probing ratios. 
The resulting analysis yields explicit stationarity bounds in terms of the 
temporal variability of the problem and possible oracle errors. 
Our complexity bounds recover the complexity of existing zeroth-order methods 
in the time-invariant setting, while extending direct-search methods beyond static settings. 
As an illustrative application, we show that the methodology is naturally suited to solve optimal (equilibrium-selection) control problems for  dynamical systems. In this setting, the analysis yields explicit stationarity bounds in terms of the temporal variability of the problem, measured through the effects of plant dynamics and exogenous disturbance variations. 
\end{abstract}

\begin{keywords}
Bandit optimization, time-varying optimization, direct-search methods,
online nonconvex optimization, feedback optimization.
\end{keywords}

\begin{MSCcodes}
90C26, 90C56, 90C60, 93C40
\end{MSCcodes}

\gbmargin{Things to do in the SIOPT file: update intro; update caption of the table; search and remove $\mathcal U$}
 
\section{Introduction}
\label{sec:intro}

\subsection{Motivation}
In many modern applications, optimization problems are 
inherently dynamic: objectives and constraints evolve as new data arrives or 
as the environment changes. Examples arise in machine 
learning~\cite{YC-YZ:20,AR-KS:13}, signal processing~\cite{FJ-AR:13}, 
robotics~\cite{AD-VC-AG-GR-FB:23}, and industrial control~\cite{CB-BS-BD:09}. 
In such settings, repeatedly solving each problem to optimality is 
computationally impractical. This perspective motivates online (time-varying) 
optimization, where decisions are refined incrementally as the problem 
evolves. When successive instances exhibit regularity (e.g., bounded 
variation), past information can be leveraged 
to accelerate updates, enabling lightweight methods that perform only a few 
operations per step, or often only a single iteration.

In this work, we seek online solutions to the sequence of optimization problems
\begin{align}\label{eq:optimization_main}
    \min_{u\in\real^p}\ \Phi_t(u),
    \qquad t\in\naturalnneg, 
\end{align}
where $u \in \real^p$, and $\Phi_t: \real^p \rightarrow \real$ for each $t.$
We are motivated by applications where the objective is accessible only 
through inexact, zeroth-order oracle evaluations:
\begin{align}\label{eq:oracle_inexact}
(t, u) \mapsto \tilde{\Phi}_t(u),
\end{align}
where $\tilde{\Phi}_t:\real^p \to \real$ models a proxy for $\Phi_t(\cdot)$.
The precise assumptions on the objective sequence and the inexact oracle are
stated in \Cref{sec:prob_formulation}.
This setting arises, for instance, in control systems, robotics, and
industrial process optimization, where the objective depends on the response
of an unknown dynamical system and can only be evaluated through online
measurements. It is also relevant in machine learning and signal processing,
where the relationship between the decision variables and the loss may be
unknown, and gradient information may be unavailable or prohibitively
expensive to compute. Another representative example is portfolio
optimization, where an investor only observes the realized return associated
with the selected portfolio, rather than the entire return function.
It is worth noting that \eqref{eq:oracle_tv_problem} corresponds to the
\emph{bandit information} setting \cite{AF-AK-HM:04}, in which only the reward 
\emph{value} associated with the selected decision, namely
$\widetilde{\Phi}_t(u_t)$, is observed. This contrasts with the
\emph{full-information} setting, where the entire reward \textit{function}
$\Phi_t(\cdot)$ is revealed to the agent after each~round.

A central limitation of existing approaches in this setting is that they 
typically guarantee convergence only to an approximate stationary 
point~\cite{YN-VS:17}. This limitation stems from algorithmic strategies that 
seek to emulate first-order methods using noisy gradient estimates, obtained, 
for example, through finite differences or random perturbations, when exact 
gradient information is unavailable~\cite{IS-DS-JM:19}.
In this work, we depart from this paradigm and propose the first direct-search 
method for time-varying optimization under bandit feedback. Rather than 
approximating a higher-order oracle, the proposed method fully exploits the 
zeroth-order nature of the available information. Specifically, it probes the 
loss along random directions and updates the iterates through an accept--reject 
rule.

While inspired by recent advances in zeroth-order optimization for static 
problems~\cite{EB-EG-PR:20}, our approach addresses challenges that do not 
arise in the static setting. In static optimization, several oracle evaluations 
may be performed while the objective remains unchanged. By contrast, in 
time-varying problems, only a single oracle query is available before the 
objective evolves. We therefore develop a direct-search scheme that operates 
under a two-query-per-step budget. Moreover, motivated by the applications, our 
analysis explicitly accounts for inaccuracies in the zeroth-order oracle. As a 
representative application, we consider online optimal equilibrium selection in 
control systems and show that it naturally fits within the proposed framework.

\subsection{Related works} 
Online Convex Optimization (OCO)  was launched by the seminal 
work~\cite{MZ:03}, which proposed an online gradient descent method for 
unconstrained OCO and established an $O(\sqrt{T})$ regret bound. 
This result was later improved in~\cite{EH-AA-SK:07}. While these early 
works considered static offline benchmarks, subsequent contributions extended 
these to dynamic environments with time-varying comparators, establishing 
sublinear dynamic regret guarantees~\cite{OB-YG-AZ:15,EH-RW:15,AM-SS-AJ-AR:16}. 
Relative to this body of literature, our focus is the nonconvex setting. 

Most of the OCO literature relies on first-order information. Bandit OCO was 
initiated by the work of Flaxman et al.~\cite{AF-AK-HM:04}, where a one-point gradient 
estimator was proposed and an $O(T^{3/4})$ regret bound was established. 
Later works developed improved algorithms for multi-point feedback settings and 
obtained tighter regret bounds~\cite{AA-OD-LX:10}. More recently, bandit OCO has
also been studied using two-point gradient estimators~\cite{IS-DS-JM:19}, 
accounting for switching costs, predictions, and dynamic environments
\cite{NCA-AW-SB-LLA:15,NCA-JC-ZL-AG-AW:16}. Relative to these techniques, our 
approach does not seek to mimic a first-order method (relying on gradient 
surrogates), but rather seeks a direct-search methodology. 

Concerning Online Non-Convex Optimization (ONCO), early contributions 
\cite{EH-SK:12} established an $O(\sqrt{T})$ regret guarantee for the
special case of submodular losses. A different line of work was pursued in
\cite{LZ-TY-RJ-ZZ:15}, which considered a structured class of non-convex
bandit problems where each loss is obtained by composing a non-increasing
scalar function with a slowly varying linear mapping. The proposed method
achieves a regret of $\tilde O(\mathrm{poly}(p)\,T^{2/3})$, with $p$ denoting
the dimension of the decision space. The authors of \cite{WK-MB-CT-AB:15,OAM-RM:10,LY-LD-MH-CT-WW:18} leverage exponential-weighting techniques to 
obtain regret bounds of order $O(\sqrt{T\log T})$.
Constrained variants have been considered in \cite{DS-IS-JM-MZ:18,YY-ED-AB-SL:22}. 
Most existing works in ONCO adopt regret as the primary performance metric, as 
convergence to a global minimizer is generally unattainable~\cite{EH-KS-CZ:17}. 
In this work, we instead focus on stationarity guarantees, which provide a more 
general characterization of optimization performance. As a byproduct, we show in \Cref{sec:regret_analysis} that the proposed 
framework also admits a local-regret interpretation and yields regret bounds 
comparable to those established in the ONCO literature.

Our approach builds on direct-search methods for static optimization
\cite{TGK-RML-VT:03,ARC-KS-LNV:09,LNV:13,MOD-LNV:16}, including randomized
variants with improved complexity guarantees for smooth nonconvex
problems~\cite{SG-CWR-LNV-ZZ:15,EB-EG-PR:20}. Despite this progress, the
existing theory remains confined to static objectives and exact function
evaluations, assumptions that exclude many online applications. Extending
direct search beyond this regime is nontrivial: the objective may change
between consecutive queries, while oracle inaccuracies can mask genuine
descent. This work overcomes these limitations by developing the first
direct-search framework for time-varying optimization with inexact bandit
feedback, together with guarantees that explicitly quantify the effects of
temporal drift, oracle error, and a limited query budget.

Finally, our work is motivated by control applications and connects with the 
feedback optimization literature \cite{GB-JC-JP-ED:21-tcns,AH-SB-GH-FD:20}. 
Particularly relevant are the descent-based approach of \cite{AM-GB:25b}, which 
emulates a first-order method, and the direct-search approach of \cite{GG:26}, 
which instead assumes time-scale separation between plant and optimization.

\subsection{Statement of contributions}
The main contributions are threefold. 
First, we propose a direct-search algorithm for nonconvex time-varying 
optimization under inexact zeroth-order information. Unlike static  direct-search 
methods, the proposed approach uses only two function evaluations per step; unlike 
existing descent-based methods, it does not rely on gradient surrogates.
Second, we establish non-asymptotic stationarity guarantees explicitly accounting 
for temporal drifts of the loss and oracle inexactness, and derive 
iteration-complexity bounds under both constant and diminishing probing ratios. 
Since global optimality is generally unattainable in nonconvex 
optimization, even in offline settings \cite{EH-KS-CZ:17}, our analysis focuses 
on stationarity (providing guarantees on the expected gradient norm), and recovers 
regret bounds as a special case. Third, we show how the framework naturally 
applies to the control of dynamical systems, yielding convergence guarantees that quantify 
the interplay between optimization progress, temporal drift, and plant-induced 
oracle errors.

\subsection{Comparison with related methods}
\label{sec:comparison_related}

\Cref{tab:comparison_tv} positions the guarantees established in this paper
against representative first-order and zeroth-order methods. Two points are
worth emphasizing. First, in the absence of temporal drift and oracle
inexactness ($D_T=B_T=0$), Rows~6--7 reduce to Rows~3--4: the proposed method
recovers the dimension and accuracy scalings of static direct search, and in
particular the classical zeroth-order iteration complexity
$\mathcal{O}(p\varepsilon^{-2})$. Second, the last column exhibits the trade-off
induced by the probing schedule: the constant-probing regime accommodates the
same linear drift growth $D_T=\mathcal{O}(T)$ as the local-regret framework
of~\cite{EH-KS-CZ:17} (Row~5), whereas the diminishing-probing regime attains
the sharper $\mathcal{O}(\sqrt{T}\log T)$ scaling, at the price of requiring
$D_T,B_T=\mathcal{O}(\sqrt{T})$.

\begin{table*}[t]
\centering
\setlength{\tabcolsep}{1pt}
\renewcommand{\arraystretch}{1.25}
\resizebox{\textwidth}{!}{%
\begin{tabular}{clccccc}
\toprule
& \textbf{Method} 
& \textbf{Oracle} 
& \textbf{Admissible drift} 
& \textbf{Complexity} 
& \makecell{\textbf{Stationarity} \\  
$\min_t \mathbb{E}\|\nabla \Phi_t(u_t)\|$}\\
\midrule

\multicolumn{6}{c}{\textbf{Time-invariant}} \\
\midrule

1
& Gradient Descent~\cite{YN:18}
& 1st 
& $D_T = 0$, $B_T=0 $
& $\mathcal{O}(\varepsilon^{-2})$
& $\mathcal{O}(\frac1{\sqrt{T}})$\\

2
& 
Descent-based~\cite{YN-VS:17}
& 0th
& $D_T = 0$, $B_T=0 $
& $\mathcal{O}(p\varepsilon^{-2})$
& $\mathcal{O}(\frac1{\sqrt{T}})+\mathcal{O}(p \delta )$\\

3
& 
Direct search STP~\cite{EB-EG-PR:20} (constant $\delta$)
& 0th 
& $D_T = 0$, $B_T=0 $
& $\mathcal{O}(p\varepsilon^{-2})$
& $\mathcal{O}\left(\sqrt{p}(\frac{1}{\delta T} + \delta)
\right)$\\

4
& 
Direct search STP~\cite{EB-EG-PR:20}  ($\delta_t=\frac{1}{\sqrt{t+1}}$)
& 0th 
& $D_T = 0$, $B_T=0 $
& $\mathcal{O}\!\left(
p\varepsilon^{-2}
\log^2\!\left(\frac{\sqrt p}{\varepsilon}\right)
\right)$ & --\\
\midrule

\multicolumn{6}{c}{\textbf{Time-varying}} \\
\midrule

5
& 
Sliding-window gradient descent~\cite{EH-KS-CZ:17}
& 1st 
& $D_T = \mc O(T)$
& -- 
& $\mathcal{O}(L_{\Phi} \sqrt{T})$\\[7pt]

6
& 
\textbf{This work (constant $\delta$)}
& 0th
& \makecell{$D_T = \mc O(T)$, \\$B_T=\mc O(T)$}
& $\mathcal{O}(p\varepsilon^{-2})$
& $\mathcal{O}\left(\sqrt{p}(\frac{1}{\delta T} + \delta
+
\frac{D_T+B_T}{\delta T})
\right)$\\[7pt]

7
& 
\textbf{This work ($\delta_t=\frac{1}{\sqrt{t+1}}$)}
& 0th
& \makecell{$D_T = \mc O(\sqrt{T})$,\\ $B_T=\mc O(\sqrt{T})$}
& $\mathcal{O}\!\left(
p\varepsilon^{-2}
\log^2\!\left(\frac{\sqrt p}{\varepsilon}\right)
\right)$
& 
$\mathcal{O}\!\left(\sqrt{p}\,\frac{\log T}{\sqrt{T}}\right)
+
\mathcal{O}\!\left(\sqrt{p}\,\frac{D_T+B_T}{\sqrt{T}}\right)$\\

\bottomrule
\end{tabular}%
}
\caption{Comparison of stationarity guarantees for first-order and zeroth-order
methods. Here, \(p\) denotes the decision dimension, \(T\) the iteration
horizon, \(\delta_t\) the probing radius, \(D_T\) the cumulative temporal drift
defined in \eqref{eq:objective_drift}, and \(B_T\) the cumulative oracle
inexactness defined in \eqref{as:inexact_oracle}. The results reported in Rows~6--7 follow from
\eqref{eq:main-cor-const-tv} together with
\eqref{eq:constant_delta}, and from
\eqref{eq:split_terms}, respectively.
Rows~6--7 show that our method recovers the same dimension and accuracy scalings 
in the absence of drift and oracle errors (see Rows~3--4), while extending the 
analysis to time-varying objectives through the explicit dependence on
\(D_T\) and \(B_T\). Moreover, a comparison between Row~6 and Row~2--3 shows that 
our method retains the classical zeroth-order iteration complexity 
\(\mathcal O(p\varepsilon^{-2})\), matching both the static STP method (Row~3) 
and descent-based zeroth-order schemes (Row~2). This also recovers the well-known dimension-dependent penalty incurred by randomized zeroth-order 
methods relative to deterministic first-order methods (Row~1) \cite{YN-VS:17}.
The constant-probing regime (Row~6) achieves the same
\(\mathcal O(T)\) scaling while accommodating the same linear drift growth
\(D_T=\mathcal O(T)\). In contrast, the diminishing-probing regime (Row~7)
yields the improved bound \(\mathcal O(\sqrt{T}\log T)\), at the price of
requiring slower growth of the cumulative drift and oracle inexactness.
}
\label{tab:comparison_tv}
\end{table*}

\subsection{Paper organization and notation}

\Cref{sec:prob_formulation} formulates the time-varying optimization problem and introduces the oracle models. \Cref{sec:tv_algorithms} presents the proposed direct-search algorithm and its convergence analysis. \Cref{sec:feedback_opt_description} specializes the framework to feedback optimization and online equilibrium selection in dynamical systems. 
\Cref{sec:extensions} discusses heuristic extensions. 
Numerical results are reported in \Cref{sec:simulation_results}, and concluding remarks are given in \Cref{sec:conclusions}.

We denote by $\naturalnneg \defeq \{0,1,2,\dots\}$ and 
$\naturalpos \defeq \{1,2,\dots\}$ the sets of nonnegative and positive 
integers, respectively. 
We further denote by $\Nevennneg \defeq \{2k \mid k \in \naturalnneg\}$ and 
$\Neventwo \defeq \{2k \mid k \in \naturalpos\}$ the sets of nonnegative even integers and even integers greater than or equal to $2$, respectively. 
For any $T \in \Neventwo$, we define $\evenset{T} \defeq \{0,2,\dots,T-2\}$.
For $u \in \real,$ we denote by $(u)_+ := \max\{u,0\}$.
Given a set $A$, we denote by $\mathbf{1}_{A}$ its indicator function, 
defined as $\mathbf{1}_{A}=1$ if the event $A$ occurs, and 
$\mathbf{1}_{A}=0$ otherwise. 
A function $f(T)$ is said to be $O(T^{-\alpha})$ if there exist
constants $c>0$ and $T_0 \ge 0$ such that
$f(T)\le c\,T^{-\alpha},$ 
$\forall\, T\ge T_0.$
A function $f(T)$ is said to be $f(T)=\Omega(T^{-\alpha})$ if there 
exist constants $c>0$ and $T_0\ge 0$ such that
$f(T)\ge c\,T^{-\alpha},\
\forall\, T\ge T_0.$

\section{Problem formulation}
\label{sec:prob_formulation}

We work under the following standard regularity assumption for the loss function 
in~\eqref{eq:optimization_main}, which has been widely adopted in the related 
literature (see, e.g.,~\cite{EH:16,YZ-YZ-KJ-MZ:22}).

\begin{assumption}[Properties of the loss]
\label{as:smoothness}
For each $t \in \naturalnneg$, the gradient of $\Phi_t(\cdot)$ is Lipschitz 
continuous with constant $L_{\nabla \Phi}$. 
Moreover, $\Phi_t(\cdot)$ is bounded from below\footnote{When each function 
$\Phi_t(\cdot)$ is bounded below by $\Phi_{\mathrm{low},t}$ we take 
$\Phi_{\mathrm{low}}$ as the minimum value.} by
$\Phi_{\mathrm{low}} \in \mathbb{R}$.
\QEDB \end{assumption}


For $t,T\in\naturalnneg$ and $u_t \in \real^p$, define the one-step temporal drift of the objective in~\eqref{eq:optimization_main} and its cumulative counterpart as
\begin{align}\label{eq:objective_drift}
d_t
&\defeq
\sup_{u \in \mathbb{R}^p} |\Phi_{t+1}(u)-\Phi_t(u)|,
&
D_T
&\defeq
\sum_{t=0}^{T-1} d_t .
\end{align}
Throughout, we assume\footnote{In many control applications, the input $u$ is constrained to a bounded set $\mathcal U\subset\real^p$, in which case one may instead define $d_t\defeq\sup_{u\in\mathcal U}|\Phi_{t+1}(u)-\Phi_t(u)|$. Here, we consider the unconstrained domain $\real^p$ to avoid introducing the projection step required to enforce $u_t\in\mathcal U$.} that $d_t < \infty$ for all $t \in \naturalnneg$, which implies $D_T < \infty$ for any finite $T \in \naturalnneg$.

\begin{example}[Illustrative drift models]
We provide representative instances of $\Phi_t$ and the associated drift
$d_t$ in~\eqref{eq:objective_drift}.

\emph{(i) Time-varying exogenous signal.}
Consider $\Phi_t(u)=\Psi(u,w_t),$ where $w_t \in \real^m$ is a time-varying 
disturbance or reference signal.
If $\Psi$ is $L_{\Psi,w}$-Lipschitz in its second argument, then
\[
d_t
=
\sup_{u \in \mathbb{R}^p}
|\Psi(u,w_{t+1})-\Psi(u,w_t)|
\le
L_{\Psi,w}\|w_{t+1}-w_t\|.
\]
This setting naturally appears in feedback optimization problems (see 
\Cref{sec:feedback_opt_description} for a more detailed discussion). 

\emph{(ii) Time-varying linear perturbation.}
Consider
$\Phi_t(u)=\Phi(u)+a_t^\top u,$
where $a_t \in \real^p$ is time varying. Then
\[
d_t
=
\sup_{u \in \real^p}
|(a_{t+1}-a_t)^\top u|.
\]
Thus
$d_t
\le
\sup_{u \in \real^p}\|u\|
\,\|a_{t+1}-a_t\|.$

\emph{(iii) Moving minimizer model.}
Consider
$\Phi_t(u)=\varphi(u-u_t^\star),$
where $\varphi:\real^p \to \real$ is $L_\varphi$-Lipschitz continuous and
$u_t^\star$ denotes a time-varying optimizer. Then
\[
d_t
=
\sup_{u \in \mathcal{U}}
|\varphi(u-u_{t+1}^\star)-\varphi(u-u_t^\star)|
\le
L_\varphi
\|u_{t+1}^\star-u_t^\star\|.
\]
This model is standard in time-varying optimization, where the optimal 
solution trajectory drifts over time.
\QEDB
\end{example}







In this work, we focus on the \emph{bandit} setting; that is, scenarios where
algorithms to solve~\eqref{eq:optimization_main} may have access only to 
zeroth-order evaluations of the objective:
\begin{align}\label{eq:oracle_tv_problem}
(t, u) \mapsto \Phi_t(u).
\end{align}
Nonconvex optimization under bandit feedback has been extensively studied in 
the literature; see, e.g., \cite{AA-OD-LX:10,YZ-YZ-KJ-MZ:22}. In many 
applications, however, exact oracle evaluations of the 
form~\eqref{eq:oracle_tv_problem} are unavailable
(see Example~\ref{ex:illustrative_inexact_oracles} and 
\Cref{sec:feedback_opt_description} for representative examples). Instead, one 
only has access to an \emph{inexact} zeroth-order oracle as in 
\eqref{eq:oracle_inexact}. We assume that $\tilde{\Phi}_t(\cdot)$ is continuous 
for each $t \in \naturalnneg$.

In analogy with~\eqref{eq:objective_drift}, for 
$t,T\in\naturalnneg$, and $u \in \real^p$, we define the oracle error sequence and its cumulative counterpart as
\begin{equation}
    b_t
    :=
    \sup_{u\in\mathbb{R}^p}
    \left|
        \widetilde{\Phi}_t(u)-\Phi_t(u)
    \right|,
    \qquad
    B_T
    :=
    \sum_{t=0}^{T-1}b_t.
    \label{as:inexact_oracle}
\end{equation}
Throughout, we assume that \(b_t<\infty\) for all \(t\in\naturalnneg\); 
consequently, \(B_T<\infty\) for every finite \(T\in\naturalnneg\).

\begin{example}[Illustrative oracle-error models]
\label{ex:illustrative_inexact_oracles}
We provide representative instances of the oracle approximation 
$\tilde{\Phi}_t$.

\emph{(i) Gradient-free smoothing oracle.}
Suppose the oracle is constructed from a smoothed version of the objective, as commonly done in bandit optimization \cite{AF-AK-HM:04,XC-KJRL:19}:
$\tilde{\Phi}_t(u)
=
\mathbb E_v\!\left[\Phi_t(u+\delta v)\right],$
where $v$ is a random perturbation and $\delta>0$ is a smoothing radius. 
If $\Phi_t$ is $L$-Lipschitz continuous, then
\[
b_t
= 
\sup_{u\in\mathbb{R}^p}\left|
\mathbb E_v[\Phi_t(u+\delta v)]-\Phi_t(u)
\right|
\le
L\,\delta\,\mathbb E[\|v\|].
\]

\emph{(ii) Feedback optimization with transient measurements.}
Consider a control setting in which $\Phi_t(u_t)$ measures the performance of a 
plant in steady state; that is 
$\Phi_t(u_t)=\Psi(u_t,y_{\rm ss}(u_t,w_t)),$
where $y_{\mathrm{ss}}(u_t,w_t)$ models the input-to-output response of the 
plant at steady state and $w_t$ represents unknown disturbances. Because $w_t$
is unknown, one could approximate the steady-state plant output with the 
instantaneous output; that is, $y_{\mathrm{ss}}(u_t,w_t)\approx y_{t+1}$, leading to the oracle approximation
$\widetilde\Phi_t(u_t)=\Psi(u_t,y_{t+1}).$ 
This pointwise error is studied in detail in \Cref{sec:feedback_opt_description}.

\emph{(iii) Adversarial oracle corruption.}
Suppose the oracle is corrupted by an unknown time-varying perturbation:
\[
\tilde{\Phi}_t(u)
=
\Phi_t(u)+\Delta_t(u),
\]
where $\Delta_t(u)$ is selected adversarially, possibly to deteriorate the
performance of the optimization algorithm. Supposing that the adversary has 
limited perturbation capabilities, $|\Delta_t(u)| \leq \beta_t,$
and therefore $b_t
=
\sup_{u\in\mathbb{R}^p}|\Delta_t(u)|
\leq \beta_t$.
This setting is common in online convex optimization and robust learning,
where the objective sequence may be generated by a strategic environment.
\QEDB
\end{example}

We formalize the problem of interest as follows.

\begin{problem}[Design direct-search methods for time-varying optimization under bandit feedback]
\label{prob:time_varying_inexact_design}
Design an optimization algorithm, relying on inexact zeroth-order oracle 
information \eqref{eq:oracle_inexact}, to solve the time-varying optimization 
problem~\eqref{eq:optimization_main}.
\QEDB
\end{problem}

\begin{algorithm}[t]
\caption{Online two-point direct search}
\label{alg:two_point_ds_tv}
\begin{algorithmic}[1]
\Require Initial iterate $u_0 \in \real^p$, probing ratios 
$\{\delta_t\}_{t \ge 0}$, oracle 
$\tilde \Phi_t$

\State $t \gets 0$
\While{stopping criterion not met}
    \State \label{alg:two_point_ds_tv:l1} Evaluate $\tilde \Phi_t(u_t)$

    \State \label{alg:two_point_ds_tv:l2} Draw 
    $v_t \sim \mathcal{N}(0,\frac{1}{p}I_p)$ and define $u_{t+1} := u_t + \delta_t v_t$
    
    \State 
    \label{alg:two_point_ds_tv:l3}
    Evaluate 
    $\tilde \Phi_{t+1}(u_{t+1})$

    \State \label{alg:two_point_ds_tv:l4} Update the decision as
    $    u_{t+2} =
        \begin{cases}
            u_{t+1}, & \text{if } \tilde \Phi_{t+1}(u_{t+1}) \le \tilde \Phi_t(u_t), \\
            u_t, & \text{otherwise.}
        \end{cases}$

    \State \label{alg:two_point_ds_tv:l5} $t \gets t + 2$
\EndWhile
\end{algorithmic}
\end{algorithm}


\section{Online direct search method for time-varying optimization}
\label{sec:tv_algorithms}

\subsection{Method description and stationarity guarantees}
A direct search method that addresses \Cref{prob:time_varying_inexact_design} 
is presented in \Cref{alg:two_point_ds_tv}.  
At each iteration, the algorithm first evaluates the objective function at the 
\textit{current decision} $u_t$ (line~\ref{alg:two_point_ds_tv:l1}).  
It then generates a random search direction $v_t$, constructs a 
\textit{candidate decision} by perturbing $u_t$ along this direction with 
\textit{probing radius} (or \textit{exploration ratio}) $\delta_t$, and 
evaluates the objective at this perturbed point 
(line~\ref{alg:two_point_ds_tv:l2}).  
An accept/reject step follows, where the candidate decision is accepted when it 
yields a lower objective value than the current decision 
(line~\ref{alg:two_point_ds_tv:l4}) or it is rejected otherwise. The procedure 
then advances the time index by two steps (line~\ref{alg:two_point_ds_tv:l5}), 
reflecting the two function evaluations performed per iteration.

The following result establishes a convergence guarantee for 
\Cref{alg:two_point_ds_tv}.

\begin{theorem}[Stationarity guarantees of \Cref{alg:two_point_ds_tv} under 
temporal drift and inexact oracle]
\label{thm:two_point_tv_inexact}
Suppose that Assumption~\ref{as:smoothness} holds. For any $T \in \Neventwo$, 
the iterates generated by \Cref{alg:two_point_ds_tv} satisfy:
\begin{align}
\label{eq:two_point_main_bound}
\frac{1}{\sqrt{2\pi p}}
\sum_{t \in \evenset{T}} &
\delta_t\,\mathbb{E}\!\left[\|\nabla \Phi_t(u_t)\|\right]
\le
\Phi_0(u_0)-\Phi_{\mathrm{low}} 
+\frac{L_{\nabla \Phi}}{4}
\sum_{t \in \evenset{T}} \delta_t^2
+2 D_T
+2 B_T.
\end{align}
Consequently,
\begin{align}
\label{eq:two_point_main_cor}
\min_{t \in \evenset{T}}
\mathbb{E}\!\left[\|\nabla \Phi_t(u_t)\|\right]
&\le
\frac{\sqrt{2\pi p}}
{\sum_{t \in \evenset{T}} \delta_t}
\Bigg[
\Phi_0(u_0)-\Phi_{\mathrm{low}}
+\frac{L_{\nabla \Phi}}{4}
\sum_{t \in \evenset{T}} \delta_t^2
+2 D_T
+2 B_T
\Bigg].
\end{align}
\end{theorem}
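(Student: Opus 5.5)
The plan is to prove a one-iteration expected-descent inequality for each even $t$, with additive error terms for drift and oracle inexactness, and then telescope over $t\in\evenset{T}$. Let $\mathcal F_t$ be the $\sigma$-algebra generated by $v_0,v_2,\dots,v_{t-2}$, so that $u_t$ is $\mathcal F_t$-measurable and $v_t$ is independent of $\mathcal F_t$. Write $g_t\defeq\nabla\Phi_t(u_t)$.

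\emph{Step 1 (deterministic comparison).} I will show that for every even $t$,
\[
\Phi_t(u_{t+2})\le \min\{\Phi_t(u_t),\Phi_t(u_t+\delta_t v_t)\}+d_t+b_t+b_{t+1}.
\]
Suppose the candidate is accepted. Then
\[
\Phi_t(u_{t+1})\le \Phi_{t+1}(u_{t+1})+d_t\le \tilde\Phi_{t+1}(u_{t+1})+b_{t+1}+d_t\le \tilde\Phi_t(u_t)+b_{t+1}+d_t\le \Phi_t(u_t)+b_t+b_{t+1}+d_t .
\]
This bounds $\Phi_t(u_{t+2})=\Phi_t(u_{t+1})$ by both $\Phi_t(u_{t+1})$ and $\Phi_t(u_t)$ plus the errors, hence by their minimum. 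If the candidate is rejected, the same chain run in reverse gives $\Phi_t(u_t)\le \Phi_t(u_{t+1})+d_t+b_t+b_{t+1}$, with the same conclusion. Next, $\Phi_{t+2}(u_{t+2})\le \Phi_t(u_{t+2})+d_t+d_{t+1}$. Altogether,
\[
\Phi_{t+2}(u_{t+2})\le\min\{\Phi_t(u_t),\Phi_t(u_t+\delta_tv_t)\}+2d_t+d_{t+1}+b_t+b_{t+1}.
\]
Summing the error terms over $t\in\evenset{T}$ gives at most $2D_T+B_T\le 2D_T+2B_T$.

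\emph{Step 2 (expected descent of the min).} By the descent lemma (Assumption~\ref{as:smoothness}),
\[
\min\{\Phi_t(u_t),\Phi_t(u_t+\delta_tv_t)\}-\Phi_t(u_t)\le \min\{0,\ \delta_t\langle g_t,v_t\rangle+\tfrac{L_{\nabla\Phi}}{2}\delta_t^2\|v_t\|^2\}.
\]
The right-hand side is at most $\mathbf 1_{\{\langle g_t,v_t\rangle<0\}}\big(\delta_t\langle g_t,v_t\rangle+\tfrac{L_{\nabla\Phi}}{2}\delta_t^2\|v_t\|^2\big)$. Now take the conditional expectation given $\mathcal F_t$. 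Since $\langle g_t,v_t\rangle\sim\mathcal N(0,\|g_t\|^2/p)$, we have $\mathbb E[\langle g_t,v_t\rangle\mathbf 1_{\{\langle g_t,v_t\rangle<0\}}]=-\|g_t\|/\sqrt{2\pi p}$. By the symmetry $v\mapsto -v$ and $\mathbb E\|v_t\|^2=1$, we also have $\mathbb E[\|v_t\|^2\mathbf 1_{\{\langle g_t,v_t\rangle<0\}}]=\tfrac12$ when $g_t\neq0$; the case $g_t=0$ is handled trivially by bounding the min by $0$. This symmetry step is the only delicate point, and it is exactly what produces the constant $L_{\nabla\Phi}/4$ rather than $L_{\nabla\Phi}/2$. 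It yields
\[
\mathbb E[\Phi_{t+2}(u_{t+2})\mid\mathcal F_t]\le \Phi_t(u_t)-\tfrac{\delta_t}{\sqrt{2\pi p}}\|g_t\|+\tfrac{L_{\nabla\Phi}}{4}\delta_t^2+2d_t+d_{t+1}+b_t+b_{t+1}.
\]

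\emph{Step 3 (telescoping).} Take total expectations and sum over $t\in\evenset{T}$. Then use $\mathbb E\,\Phi_T(u_T)\ge\Phi_{\mathrm{low}}$ and the error bound from Step 1. This gives \eqref{eq:two_point_main_bound}. The corollary \eqref{eq:two_point_main_cor} follows by lower bounding the left-hand side by $\min_{t\in\evenset{T}}\mathbb E\|\nabla\Phi_t(u_t)\|\cdot\sum_{t\in\evenset{T}}\delta_t/\sqrt{2\pi p}$.
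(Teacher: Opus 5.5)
Your proposal is correct and follows essentially the same route as the paper. Both arguments use a deterministic per-step comparison against $\min\{\Phi_t(u_t),\Phi_t(u_t+\delta_t v_t)\}$, then the descent lemma with the Gaussian one-sided moments (the symmetry argument gives $L_{\nabla\Phi}/4$), then telescoping. The only difference is bookkeeping in Step 1: you compare directly at $\Phi_t$ in each accept/reject case rather than passing through $\tilde\Phi_{\min,t}$, so you pay $b_t+b_{t+1}$ per iteration instead of $2(b_t+b_{t+1})$. This gives the slightly sharper constant $B_T$ in place of $2B_T$, which implies the stated bound.
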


\begin{proof}
Fix $t \in \Nevennneg$ and define
\begin{align*}
\tilde{\Phi}_{\min,t}
\defeq
\min\{\tilde{\Phi}_t(u_t),\,\tilde{\Phi}_{t+1}(u_{t+1})\}, &&
u_{t+1} \defeq u_t + \delta_t v_t.
\end{align*}
We first relate $u_{t+2}$ to $\tilde{\Phi}_{\min,t}$. If the step is accepted, 
then $u_{t+2}=u_{t+1}$ and, by \Cref{eq:objective_drift,as:inexact_oracle},
\begin{align*}
\Phi_{t+2}(u_{t+2})
=
\Phi_{t+2}(u_{t+1})
\le
\Phi_{t+1}(u_{t+1}) + d_{t+1}
&\le
\tilde{\Phi}_{t+1}(u_{t+1}) + b_{t+1} + d_{t+1}
\notag\\
&=
\tilde{\Phi}_{\min,t} + b_{t+1} + d_{t+1}.
\end{align*}
If the step is rejected, then $u_{t+2}=u_t$ and, similarly,
\begin{align*}
\Phi_{t+2}(u_{t+2})
=
\Phi_{t+2}(u_t)
\le
\Phi_t(u_t) + d_t + d_{t+1}
&\le
\tilde{\Phi}_t(u_t) + b_t + d_t + d_{t+1}
\notag\\
&=
\tilde{\Phi}_{\min,t} + b_t + d_t + d_{t+1}.
\end{align*}
Hence, in both cases,
\begin{equation}
\Phi_{t+2}(u_{t+2})
\le
\tilde{\Phi}_{\min,t}
+ b_t + b_{t+1}
+ d_t + d_{t+1}.
\label{eq:selected-vs-min}
\end{equation}
Therefore, by \Cref{as:inexact_oracle},
\begin{align}
\tilde{\Phi}_{\min,t}
&\le
\min\{\Phi_t(u_t),\,\Phi_{t+1}(u_{t+1})\}
+ \max\{b_t,b_{t+1}\}
\notag\\
&\le
\min\{\Phi_t(u_t),\,\Phi_{t+1}(u_{t+1})\}
+ b_t + b_{t+1}.
\label{eq:min-inexact}
\end{align}
Combining \eqref{eq:selected-vs-min} and \eqref{eq:min-inexact} yields
\begin{equation}
\Phi_{t+2}(u_{t+2})
\le
\min\{\Phi_t(u_t),\,\Phi_{t+1}(u_{t+1})\}
 + 2(b_t + b_{t+1})
+ d_t + d_{t+1}.
\label{eq:key-min}
\end{equation}

Now, using \Cref{eq:objective_drift},  we have
\[
\Phi_{t+1}(u_{t+1}) \le \Phi_t(u_{t+1}) + d_t,
\]
and thus~\eqref{eq:key-min} can be rewritten as
\begin{equation}
\Phi_{t+2}(u_{t+2})
\le
\min\{\Phi_t(u_t),\,\Phi_t(u_{t+1})\}
+ 2(b_t + b_{t+1})
+ 2d_t + d_{t+1}.
\label{eq:key-static}
\end{equation}

Next, let $g_t \defeq \nabla \Phi_t(u_t)$. By smoothness,
\[
\Phi_t(u_t)-\Phi_t(u_{t+1})
\ge
-\delta_t \langle g_t,v_t\rangle
-\frac{L_{\nabla \Phi}}{2}\delta_t^2\|v_t\|^2 .
\]
Therefore,
\begin{align}
\bigl(\Phi_t(u_t) -\Phi_t(u_{t+1})\bigr)_+ 
& \ge
\left(
-\delta_t \langle g_t,v_t\rangle
-\frac{L_{\nabla \Phi}}{2}\delta_t^2\|v_t\|^2
\right)_+ \notag\\
&\ge
\left(
-\delta_t \langle g_t,v_t\rangle
-\frac{L_{\nabla \Phi}}{2}\delta_t^2\|v_t\|^2
\right)
\mathbf 1_{\{\langle g_t,v_t\rangle<0\}} .
\end{align}
Let $\mathcal F_{t-2} \defeq \sigma(v_0,v_2,\ldots,v_{t-2})$ denote the 
sigma-algebra generated by all search directions prior to time \(t\), and note 
that, conditionally on $\mathcal F_{t-2}$, \(u_t\) and \(g_t\) are
\(\mathcal F_{t-2}\)-measurable, while \(v_t\) is independent of
\(\mathcal F_{t-2}\) and satisfies \(v_t\sim\mathcal N(0,\frac1p I_p)\).
Taking conditional expectation and applying 
\Cref{lem:gaussian_one_sided} gives
\begin{align}
\mathbb{E}\!\left[
\bigl(\Phi_t(u_t)-\Phi_t(u_{t+1})\bigr)_+
\,\middle|\, \mathcal F_{t-2}
\right]
&\ge
\frac{\delta_t}{\sqrt{2\pi p}}\|g_t\|
-\frac{L_{\nabla \Phi}}{4}\delta_t^2 .
\end{align}
Substituting into \eqref{eq:key-static}, taking conditional expectation with
respect to \(\mathcal F_{t-2}\), and then taking expectation over \(v_t\),
yields
\begin{align}\label{eq:main-step}
\mathbb{E}[\Phi_{t+2}(u_{t+2})]
&\le
\mathbb{E}[\Phi_t(u_t)]
-\frac{\delta_t}{\sqrt{2\pi p}}
\mathbb{E}\!\left[\|\nabla \Phi_t(u_t)\|\right]\\
&\quad
+\frac{L_{\nabla \Phi}}{4}\delta_t^2
+2d_t + d_{t+1}
+2 b_t + 2 b_{t+1}.\notag
\end{align}
Summing \eqref{eq:main-step} over $t \in \evenset{T}$ yields a telescoping sum. 
Moreover,
\begin{align*}
\sum_{t \in \evenset{T}} (2d_t + d_{t+1})
\le
2 \sum_{t=0}^{T-1} d_t
= 2 D_T,
\end{align*}
and similarly
\[
\sum_{t \in \evenset{T}} 2 (b_t + b_{t+1})
\le
2 \sum_{t=0}^{T-1}  b_t 
= 2B_T.
\]
Using $\Phi_T(u_T) \ge \Phi_{\mathrm{low}}$ and taking expectation
yields \eqref{eq:two_point_main_bound}. 
Finally, the bound~\eqref{eq:two_point_main_cor} follows by standard
averaging arguments: since all terms in the sum are nonnegative,
we have
\[
\min_{t \in \evenset{T}} 
\mathbb{E}\!\left[\|\nabla \Phi_t(u_t)\|\right]
\le
\frac{
\sum_{t \in \evenset{T}} 
\delta_t\,\mathbb{E}\!\left[\|\nabla \Phi_t(u_t)\|\right]
}{
\sum_{t \in \evenset{T}} \delta_t
}.
\]
Combining this inequality with~\eqref{eq:two_point_main_bound} yields
\eqref{eq:two_point_main_cor}.
\end{proof}

\Cref{thm:two_point_tv_inexact} shows that \Cref{alg:two_point_ds_tv} achieves 
a descent in expectation, quantified through a bound on the cumulative sum of 
the gradient norms evaluated along the iterates. Importantly, the theorem 
provides an estimate on the minimum expected gradient norm along the 
trajectory, showing that the algorithm tracks approximate stationary points up 
to errors induced by time variations and oracle inaccuracies.



Theorem~\ref{thm:two_point_tv_inexact} admits several important
specializations:
\begin{itemize}
        \item \textit{No temporal drift.}
    If $\Phi_{t+1}(u)=\Phi_t(u)$  $\forall t$, then $d_t\equiv0$ and
    \eqref{eq:two_point_main_bound} reduces to
    \begin{align*}
    \frac{1}{\sqrt{2\pi p}}
    \sum_{t \in \evenset{T}}
    \delta_t\,\mathbb{E}\!\left[\|\nabla \Phi(u_t)\|\right]
    & \le
    \Phi_0(u_0)-\Phi_{\mathrm{low}} 
    +\frac{L_{\nabla \Phi}}{4}
    \sum_{t \in \evenset{T}} \delta_t^2
    +2B_T.
    \end{align*}

    \item \textit{Exact oracle.}
    If $\tilde \Phi_t(u)=\Phi_t(u)$  $\forall t$, then $b_t\equiv0$ and
    \eqref{eq:two_point_main_bound} reduces to
    \begin{align*}
    \frac{1}{\sqrt{2\pi p}}
    \sum_{t \in \evenset{T}}
    \delta_t\,\mathbb{E}\!\left[\|\nabla \Phi_t(u_t)\|\right]
    & \le
    \Phi_0(u_0)-\Phi_{\mathrm{low}} 
    +\frac{L_{\nabla \Phi}}{4}
    \sum_{t \in \evenset{T}} \delta_t^2
    +2 D_T.
    \end{align*}

    \item \textit{No temporal drift and exact oracle.}
    If both $b_t\equiv0$ and $d_t\equiv0$, then
    \eqref{eq:two_point_main_bound} simplifies to
    \begin{align*}
    \frac{1}{\sqrt{2\pi p}}
    \sum_{t \in \evenset{T}}
    \delta_t\,\mathbb{E}\!\left[\|\nabla \Phi(u_t)\|\right]
    & \le
    \Phi_0(u_0)-\Phi_{\mathrm{low}}
    +\frac{L_{\nabla \Phi}}{4}
    \sum_{t \in \evenset{T}} \delta_t^2,
    \end{align*}
    recovering the form of standard stationarity guarantee for direct search
    methods in static settings \cite{EB-EG-PR:20}.
    Note that, relative to \cite{EB-EG-PR:20}, our bound deteriorates by a factor 
    of $\sqrt{2}$ due to the use of a two-point accept-reject rule (in place of a 
    three point comparison as in \cite{EB-EG-PR:20}).
\end{itemize}

\subsection{Constant probing}

We next identify a stepsize for which \Cref{alg:two_point_ds_tv} reaches an 
\(\varepsilon\)-stationary point and establish its iteration complexity.

\begin{theorem}[Iteration complexity of \Cref{alg:two_point_ds_tv}--constant probing ratio]
\label{thm:two_point_const}
Suppose that Assumption~\ref{as:smoothness} holds. 
Fix $\varepsilon > 0$ and let the probing ratio be
\begin{align}\label{eq:constant_delta}
\delta_t \equiv \delta := \frac{4\varepsilon}{3\sqrt{2\pi p}\,L_{\nabla \Phi}}.
\end{align}
Let $T \in \Neventwo$, 
$\mathcal{E}_T \defeq \frac{2 D_T +2B_T}{T}$, 
and suppose that
\begin{align}\label{eq:mu-condition-thm2-tv}
\mathcal{E}_T
&\le
\frac{\varepsilon^2}{9\pi p L_{\nabla \Phi}}.
\end{align}
Then, for any $T$ satisfying
\begin{align}
T\ge
9\pi p L_{\nabla \Phi}\bigl(\Phi_0(u_0)-\Phi_{\mathrm{low}}\bigr)\varepsilon^{-2},
\label{eq:T-condition-thm2-tv}
\end{align}
the iterates of \Cref{alg:two_point_ds_tv} satisfy
\begin{equation*}
\min_{t \in \evenset{T}}
\mathbb{E}\!\left[
\|\nabla \Phi_t(u_t)\|
\right]
\le \varepsilon.
\end{equation*}
\end{theorem}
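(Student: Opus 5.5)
The plan is to specialize the general bound \eqref{eq:two_point_main_cor} of \Cref{thm:two_point_tv_inexact} to the constant probing ratio \eqref{eq:constant_delta}. The right-hand side then splits into three terms: an initialization term, a probing-bias term, and a drift/inexactness term. The goal is to show that each is at most $\varepsilon/3$ under the stated conditions. No new probabilistic argument is needed; the whole proof is bookkeeping on top of \Cref{thm:two_point_tv_inexact}.

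\emph{Step 1: substitute the constant ratio.} Since $T\in\Neventwo$, the index set $\evenset{T}$ has exactly $T/2$ elements. Hence $\sum_{t\in\evenset{T}}\delta_t = T\delta/2$ and $\sum_{t\in\evenset{T}}\delta_t^2 = T\delta^2/2$. By definition of $\mathcal{E}_T$ we also have $2D_T+2B_T = T\mathcal{E}_T$. Writing $\Delta_0 \defeq \Phi_0(u_0)-\Phi_{\mathrm{low}}$ and plugging into \eqref{eq:two_point_main_cor} gives
\begin{align*}
\min_{t\in\evenset{T}}\mathbb{E}\|\nabla\Phi_t(u_t)\|
\le \frac{2\sqrt{2\pi p}\,\Delta_0}{T\delta}
+\frac{\sqrt{2\pi p}\,L_{\nabla\Phi}\,\delta}{4}
+\frac{\sqrt{2\pi p}\,\mathcal{E}_T}{\delta}.
\end{align*}

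\emph{Step 2: bound each term using $\delta = 4\varepsilon/(3\sqrt{2\pi p}L_{\nabla\Phi})$.}
\begin{itemize}
\item The middle (probing-bias) term equals exactly $\varepsilon/3$; this is what the choice of $\delta$ is calibrated for.
\item The first term simplifies to $3\pi p L_{\nabla\Phi}\Delta_0/(\varepsilon T)$. It is at most $\varepsilon/3$ precisely when \eqref{eq:T-condition-thm2-tv} holds.
\item The third term simplifies to $3\pi p L_{\nabla\Phi}\mathcal{E}_T/(2\varepsilon)$. Under \eqref{eq:mu-condition-thm2-tv} it is at most $\varepsilon/6\le\varepsilon/3$.
\end{itemize}
Summing the three bounds gives the claim $\min_{t\in\evenset{T}}\mathbb{E}\|\nabla\Phi_t(u_t)\|\le\varepsilon$.

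\emph{Main obstacle.} The only real risk is arithmetic: tracking the factor $1/2$ from counting only the even indices, and the constants $\sqrt{2\pi p}$ and $\pi$, so that the thresholds in \eqref{eq:mu-condition-thm2-tv} and \eqref{eq:T-condition-thm2-tv} come out exactly as stated. I would verify each simplification separately, in particular that $(\sqrt{2\pi p})^2 = 2\pi p$ cancels correctly against the $\sqrt{2\pi p}$ in $\delta$. The drift condition turns out to have slack, giving $\varepsilon/6$ rather than $\varepsilon/3$, which is harmless.
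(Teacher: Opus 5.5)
Your approach is the same as the paper's: specialize \eqref{eq:two_point_main_cor} to $\delta_t\equiv\delta$ using $|\evenset{T}|=T/2$, split the bound into initialization, probing-bias, and drift/inexactness terms, and show each is at most $\varepsilon/3$. Your first two terms match the paper's exactly.

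There is one arithmetic slip in Step~1, which you flagged as the main risk. The prefactor is $\sqrt{2\pi p}/(T\delta/2)=2\sqrt{2\pi p}/(T\delta)$, and it multiplies $2D_T+2B_T=T\mathcal{E}_T$. So the third term is $2\sqrt{2\pi p}\,\mathcal{E}_T/\delta = 3\pi p L_{\nabla\Phi}\mathcal{E}_T/\varepsilon$, twice what you wrote. Under \eqref{eq:mu-condition-thm2-tv} it is therefore at most $\varepsilon/3$ exactly, not $\varepsilon/6$; the constant $9\pi p L_{\nabla\Phi}$ there is chosen for exactly this. There is no slack, but your conclusion still holds.
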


\begin{proof}
Under the stated assumptions, we have
\[
\sum_{t \in \evenset{T}}
(2d_t+d_{t+1})
\le
2 D_T,
\qquad
2B_T = 2\sum_{t=0}^{T-1} b_t .
\]
Using \Cref{eq:two_point_main_cor} with $\delta_t \equiv \delta$, we obtain
\begin{align}\label{eq:constant_delta_bound}
\min_{t=0,2,\dots,T-2} &
\mathbb{E}\!\left[\|\nabla \Phi_t(u_t)\|\right]
\le
\frac{\sqrt{2\pi p}}{\frac{T}{2}\delta}
\Bigg[
\Phi_0(u_0)-\Phi_{\mathrm{low}}
+\frac{L_{\nabla \Phi}}{4}\frac{T}{2}\delta^2
+2 D_T
+2B_T
\Bigg].
\end{align}
Simplifying yields
\begin{align}\label{eq:main-cor-const-tv}
\min_{t=0,2,\dots,T-2} &
\mathbb{E}\!\left[\|\nabla \Phi_t(u_t)\|\right]
\le
\sqrt{2\pi p}
\Bigg[
\frac{2(\Phi_0(u_0)-\Phi_{\mathrm{low}})}{\delta T}
+\frac{L_{\nabla \Phi}}{4}\delta
+\frac{4D_T+4B_T}{\delta T}
\Bigg].
\end{align}
We next bound each term individually. First, \eqref{eq:constant_delta} implies
\[
\sqrt{2\pi p}\,\frac{L_{\nabla \Phi}}{4}\delta
=
\frac{\varepsilon}{3}.
\]
Next, using $\mathcal{E}_T = \frac{2 D_T +2B_T}{T}$, we have
\[
\sqrt{2\pi p}\,\frac{2(2 D_T +2B_T)}{\delta T}
=
\sqrt{2\pi p}\,\frac{2\mathcal{E}_T}{\delta}
=
\frac{3\pi p L_{\nabla \Phi}\,\mathcal{E}_T}{\varepsilon}.
\]
Thus, condition \eqref{eq:mu-condition-thm2-tv} implies
\[
\sqrt{2\pi p}\,\frac{2(2 D_T +2B_T)}{\delta T}
\le
\frac{\varepsilon}{3}.
\]
Finally,
\[
\sqrt{2\pi p}\,\frac{2(\Phi_0(u_0)-\Phi_{\mathrm{low}})}{T\delta}
=
\frac{3\pi p L_{\nabla \Phi}\bigl(\Phi_0(u_0)-\Phi_{\mathrm{low}}\bigr)}{T\varepsilon}.
\]
Hence, condition \eqref{eq:T-condition-thm2-tv} ensures
\[
\sqrt{2\pi p}\,\frac{2(\Phi_0(u_0)-\Phi_{\mathrm{low}})}{T\delta}
\le
\frac{\varepsilon}{3}.
\]
Substituting into \eqref{eq:main-cor-const-tv} yields the claim.
\end{proof}

By \Cref{thm:two_point_const},  \Cref{alg:two_point_ds_tv}
attains an $\varepsilon$-approximate stationary point within
\begin{align}\label{eq:iteration_complex_constant_delta}
T
=
O\!\left(
p\varepsilon^{-2}
\right),
\end{align}
iterations, provided that the probing ratio is chosen according to
\eqref{eq:constant_delta} and that the combined effect of temporal variability 
and oracle inexactness (captured by $\mathcal{E}_T$) is sufficiently small. 
This generalizes the classical complexity scaling of static direct-search methods 
in both the dimension $p$ and the target accuracy 
$\varepsilon$~\cite{EB-EG-PR:20}.


For a fixed horizon $T$, the iteration complexity estimate
\eqref{eq:iteration_complex_constant_delta} can be used to characterize the
smallest accuracy level certified by the theorem. This is given by:
\begin{align}\label{eq:resolution_constant}
\varepsilon
\ge
\sqrt{\frac{9\pi p L_{\nabla \Phi}
\bigl(\Phi_0(u_0)-\Phi_{\mathrm{low}}\bigr)}{T}},
\end{align}
showing that the achievable resolution after $T$ iterations scales 
as $\varepsilon = \Omega(T^{-1/2})$.


Condition~\eqref{eq:mu-condition-thm2-tv} imposes a requirement, asking the 
average problem variability and oracle inexactness to remain sufficiently 
small. Precisely, it requires:
\begin{align}
\label{eq:target_ET_constant}
\mathcal{E}_T
=
O\!\left(
\frac{\varepsilon^2}{p}
\right),
\end{align}
which asks the admissible cumulative temporal drift and oracle inexactness to  
decrease quadratically with the target stationarity accuracy $\varepsilon$, and 
inversely proportionally to the problem dimension $p$.
More precisely, recalling that
$\mathcal{E}_T
=
\frac{1}{T} (2  D_T+2 B_T),$
condition~\eqref{eq:mu-condition-thm2-tv} imposes the requirement
\begin{align}\label{eq:bound_cumulat_drift_constant_delta}
2  D_T+2 B_T
\le
\frac{\varepsilon^2 T}{9 \pi p L_{\nabla \Phi}},
\end{align}
imposing the cumulative drift and oracle inexactness grow at most 
linearly with the horizon $T$ (equivalently, their average contribution must 
scale as $O\!\left(\varepsilon^2 / p\right)$).

\begin{example}[Uniform bounds on drift and inexactness]
\label{ex:uniform_bound_constant_delta}
In the special case where $d_t \le d$ and $b_t \le b$ for all $t$, we have 
$\mathcal{E}_T \le 2 d + 2b$, and the requirement~\eqref{eq:mu-condition-thm2-tv} 
simplifies to:
\begin{align}\label{eq:d_plus_b_bound_constant_delta}
d + b
\le
\frac{\varepsilon^2}{18 \pi p L_{\nabla \Phi}}.
\end{align}
Equivalently, this can be interpreted as a lower bound on the attainable 
resolution, yielding
$\varepsilon
\ge
3\sqrt{2\pi p L_{\nabla \Phi}(d + b)}.$
\QEDB\end{example}

\subsection{Diminishing probing}

The use of a constant probing ratio, as in
\Cref{thm:two_point_const}, comes with a key limitation: the choice of
\(\delta\) depends on the desired accuracy \(\varepsilon\). Consequently, the target accuracy \(\varepsilon\) must be specified before executing the
algorithm. The use of 
a diminishing probing ratio overcomes this limitation.

\begin{theorem}[Iteration complexity of \Cref{alg:two_point_ds_tv}--diminishing probing ratio]
\label{thm:two_point_diminishing}
Suppose that Assumption~\ref{as:smoothness} holds. 
Fix $\varepsilon > 0$ and let the probing ratio be
\begin{align}\label{eq:vanishing_delta}
\delta_t = 1/\sqrt{t+1}.
\end{align}
Let $T \in \Neventwo$, 
$\mathcal{E}_T \defeq \frac{2 D_T +2B_T}{T}$, 
and suppose that
\begin{equation}\label{eq:E_condition_diminishing}
\mathcal{E}_T
\;\le\;
\frac{\varepsilon}{2\sqrt{2\pi p}(\sqrt{2}+1)\sqrt{T}}.
\end{equation}
Then, for any $T$ satisfying
\begin{align}\label{eq:T_condition_diminishing}
T \;\ge\;
& \max\!\Bigg\{
\frac{2\pi p(\sqrt{2}+1)^2}{\varepsilon^2}
\bigl(4(\Phi_0(u_0)-\Phi_{\mathrm{low}})+L_{\nabla \Phi}\bigr)^2,
\notag\\
&\qquad \frac{8\pi p(\sqrt{2}+1)^2 L_{\nabla \Phi}^2}{\varepsilon^2}
\log^2\!\Bigl(
\frac{2(\sqrt{2}+1)\sqrt{2\pi p}\,L_{\nabla \Phi}}{\varepsilon}
\Bigr)
\Bigg\},
\end{align}
the iterates generated by \Cref{alg:two_point_ds_tv} satisfy
\begin{equation*}
\min_{t \in \evenset{T}}
\mathbb{E}\!\left[
\|\nabla \Phi_t(u_t)\|
\right]
\le \varepsilon.
\end{equation*}
\end{theorem}

\begin{proof}
Using \Cref{eq:two_point_main_cor}, we obtain
\begin{align}
\min_{t=0,2,\dots,T-2}
\mathbb{E}\!\left[\|\nabla \Phi_t(u_t)\|\right]
&\le
\frac{\sqrt{2\pi p}}
{\sum_{t \in \evenset{T}}\delta_t}
\Bigg[
\Phi_0(u_0)-\Phi_{\mathrm{low}}
\notag\\
&\quad
+\frac{L_{\nabla \Phi}}{4}
\sum_{t \in \evenset{T}}\delta_t^2
+2D_T
+2B_T
\Bigg].
\label{eq:main-cor-diminishing}
\end{align}
By \Cref{lem:diminishing_stepsize_sums}, the chosen probing ratios satisfy
\[
\sum_{t \in \evenset{T}}\delta_t
\ge \frac{\sqrt{T}}{\sqrt{2}+1},
\qquad
\sum_{t \in \evenset{T}}\delta_t^2
\le 1+\frac{1}{2}\log(T).
\]
Substituting into \eqref{eq:main-cor-diminishing} yields
\begin{align}\label{eq:split_terms}
\min_{t \in \evenset{T}}
\mathbb{E}\!\left[\|\nabla \Phi_t(u_t)\|\right]
&\le
\sqrt{2\pi p}(\sqrt{2}+1)
\Bigg[
\frac{\Phi_0(u_0)-\Phi_{\mathrm{low}}}{\sqrt{T}}
\notag\\
&\quad
+
\frac{\frac{L_{\nabla \Phi}}{4}\!\left(1+\frac{1}{2}\log T\right)}{\sqrt{T}}
+
\frac{2 D_T +2B_T}{\sqrt{T}}
\Bigg].
\end{align}

We now bound the three terms separately. Define
\[
\alpha = \frac{\varepsilon}{(\sqrt{2}+1)\sqrt{2\pi p}}.
\]
By iterating the arguments in the proof of \Cref{thm:two_point_const}, we 
conclude that condition \eqref{eq:T_condition_diminishing} ensures that the 
first two terms in \eqref{eq:split_terms} are upper bounded by $\varepsilon/2$. 
Moreover, using the definition $\mathcal{E}_T = \frac{2 D_T +2B_T}{T},$ we can 
rewrite the third term as
\[
\frac{2 D_T +2B_T}{\sqrt{T}}
=
\mathcal{E}_T \sqrt{T}.
\]
Hence, condition \eqref{eq:E_condition_diminishing} ensures that the third term is also upper bounded by $\varepsilon/2$. Combining the bounds yields the claim.
\end{proof}

\Cref{thm:two_point_diminishing} shows that, as\footnote{as $\varepsilon \to 0$, the second term in \eqref{eq:T_condition_diminishing} dominates the maximization.} $\varepsilon \to 0$, \Cref{alg:two_point_ds_tv} 
attains an $\varepsilon$-approximate stationary point within
\begin{align}\label{eq:iteration_complexity_diminishing}
T
=
O\!\left(
\frac{p}{\varepsilon^2}
\log^2\!\Bigl(\frac{L_{\nabla \Phi}\sqrt{p}}{\varepsilon}\Bigr)
\right),
\end{align}
iterations. 
This dependence is illustrated in \Cref{fig:dim_p}. 
Notice that the use of diminishing probing yields an additional logarithmic 
factor relative to \eqref{eq:iteration_complex_constant_delta}, corresponding to a 
mild performance degradation.


\begin{remark}[Accuracy level as a function of $T$]
\label{rem:resolution_diminishing}
For a fixed horizon $T$, the iteration complexity estimate
\eqref{eq:iteration_complexity_diminishing} can be used to characterize the
smallest accuracy level certified by the theorem. This is given by:
\begin{align}\label{eq:resolution_diminishing}
\varepsilon
=
\Omega\!\left(
\frac{\sqrt{p}\log T}{\sqrt{T}}
\right).
\end{align}
To see this, rewrite \eqref{eq:iteration_complexity_diminishing} as
$\varepsilon
\ge
\sqrt{\frac{Cp}{T}}
\log\!\Bigl(
\frac{L_{\nabla \Phi}\sqrt{p}}{\varepsilon}
\Bigr),$
for some $C>0$. Since this relation is implicit in
$\varepsilon$, we verify \eqref{eq:resolution_diminishing} by substitution.
Setting
$\varepsilon
=
c\sqrt{\frac{p}{T}}\log T,$
for some $c>0$, yields
$\frac{p}{\varepsilon^2}
=
\frac{T}{c^2\log^2 T},$
and, substituting into the right-hand side above, gives
$\log\!\Bigl(
\frac{L_{\nabla \Phi}\sqrt{p}}{\varepsilon}
\Bigr)
=
\frac{1}{2}\log T-\log\log T+O(1).$
Therefore,
\[
\frac{p}{\varepsilon^2}
\log^2\!\Bigl(
\frac{L_{\nabla \Phi}\sqrt{p}}{\varepsilon}
\Bigr)
=
O(T),
\]
showing that \eqref{eq:resolution_diminishing} satisfies
\eqref{eq:iteration_complexity_diminishing} for all sufficiently large $T$ and
a sufficiently large constant $c$. Hence, the smallest accuracy level
certified by the theorem scales as
$\Omega(\sqrt{p/T}\log T)$.
\QEDB\end{remark}

Similarly to \Cref{thm:two_point_const}, the claim requires the combined effect
of temporal drift and oracle inexactness (quantified by \(\mathcal E_T\)) to
remain sufficiently small. Substituting the scaling for $T$ in 
\eqref{eq:iteration_complexity_diminishing} into 
\eqref{eq:E_condition_diminishing} yields the following explicit requirement:
\[
\mathcal{E}_T
=
O\!\left(
\frac{\varepsilon^2}
{
p\,
\log\!\Bigl(
\frac{L_{\nabla \Phi}\sqrt{p}}{\varepsilon}
\Bigr)
}
\right),
\]
which asks that the admissible level of cumulative temporal drift and oracle
inexactness must decrease faster than quadratically with the target 
stationarity accuracy $\varepsilon$. Note that this is a more restrictive requirement relative 
to \eqref{eq:target_ET_constant}, obtained for constant probing ratios. 
Another key difference with respect to the constant probing-ratio regime, where 
the cumulative drift and oracle error are allowed to grow linearly with $T$
(see~\eqref{eq:bound_cumulat_drift_constant_delta}), the diminishing probing 
regime requires (from~\eqref{eq:E_condition_diminishing}):
\begin{align}\label{eq:error_decay_diminishing}
2 D_T +2B_T
\le
\frac{\varepsilon}{2\sqrt{2\pi p}(\sqrt{2}+1)}\,\sqrt{T}.
\end{align}
That is, the cumulative drift and oracle error must grow at most as
\(O(\sqrt{T})\) (equivalently, the average contribution must decay as
\(O(1/\sqrt{T})\)). Compare with \eqref{eq:bound_cumulat_drift_constant_delta}. 
This reflects the increased sensitivity of diminishing probing-ratio schemes to 
temporal variability, which must vanish asymptotically to guarantee convergence.

A direct comparison between the constant and diminishing stepsize cases is 
presented in \Cref{tab:constant_vs_diminishing_rates}. The following example 
specializes these requirements to the case of uniformly bounded drift and 
oracle error.

\setlength{\tabcolsep}{3pt}
\begin{table}[t]
\centering
\caption{Comparison of constant and diminishing probing ratio regimes. Notice that the convergence property holds provided that the requirement on $(D_T+B_T)/T$ is satisfied.}
\label{tab:constant_vs_diminishing_rates}
\begin{tabular}{lccc}
\toprule
\makecell{Probing\\ ratio} & 
\makecell{Requirement on \\ $(D_T+B_T)/T$} &
\makecell{Iteration \\ complexity} &
\makecell{Convergence to\\stationary point}\\
\midrule
Constant &
$O\!\left(\frac{\varepsilon^2}{p}\right)$ &
$O\!\left(\frac{p}{\varepsilon^2}\right)$ & Inexact\\

Diminishing &
$O\!\left(\frac{\varepsilon}{\sqrt{pT}}\right)$ &
$O\!\left(
\frac{p}{\varepsilon^2}
\log^2\!\Bigl(\frac{L_{\nabla \Phi}\sqrt{p}}{\varepsilon}\Bigr)
\right)$ & Exact\\
\bottomrule
\end{tabular}
\end{table}


\begin{example}[Uniform bounds on drift and inexactness]
In the special case where $d_t \le d$ and $b_t \le b$ for all $t$, we have 
$\mathcal{E}_T \le 2d+2b$. In this case, 
\eqref{eq:E_condition_diminishing} reduces to the requirement:
\begin{align}\label{eq:d_plus_b_upper_bound_diminishing}
d+b
\;\le\;
\frac{\varepsilon}
{4\sqrt{2\pi p}(\sqrt{2}+1)\sqrt{T}} .
\end{align}
That is, the instantaneous drift/inexactness must vanish at least as 
$O(1/\sqrt{T})$ to yield asymptotic stationarity.
This can equivalently be interpreted as a lower bound on the attainable 
resolution, yielding
$\varepsilon
\;\ge\;
4\sqrt{2\pi p}(\sqrt{2}+1)(d+b) \sqrt{T}.$
Compare with Example~\ref{ex:uniform_bound_constant_delta}.
\QEDB
\end{example}

The following is a direct consequence of \Cref{thm:two_point_diminishing}.

\begin{corollary}[Asymptotic stationarity with diminishing probing]
\label{cor:two_point_diminishing_asymptotic}
Suppose the assumptions of \Cref{thm:two_point_diminishing} hold. If, further, 
\[
\mathcal{E}_T \sqrt{T} \to 0
\qquad \text{as } T\to+\infty,
\]
then, 
\[
\lim_{T\to+\infty}
\min_{t \in \evenset{T}}
\mathbb{E}\!\left[
\|\nabla \Phi_t(u_t)\|
\right]
=
0.
\]
\end{corollary}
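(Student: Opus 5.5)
The plan is to pass to the limit directly in the explicit bound \eqref{eq:split_terms} from the proof of \Cref{thm:two_point_diminishing}. Invoking the theorem itself, with its $\varepsilon$-dependent thresholds, is not needed. Inequality \eqref{eq:split_terms} holds for every $T\in\Neventwo$ with $\delta_t=1/\sqrt{t+1}$. It uses only \Cref{thm:two_point_tv_inexact} and \Cref{lem:diminishing_stepsize_sums}, and requires no condition on $\mathcal{E}_T$. Rewriting its last term via $\frac{2D_T+2B_T}{\sqrt{T}}=\mathcal{E}_T\sqrt{T}$ gives, for all even $T\ge 2$,
\begin{align*}
0\le \min_{t \in \evenset{T}}\mathbb{E}\!\left[\|\nabla \Phi_t(u_t)\|\right]
\le \sqrt{2\pi p}(\sqrt{2}+1)\Bigl[\tfrac{\Phi_0(u_0)-\Phi_{\mathrm{low}}}{\sqrt{T}}+\tfrac{L_{\nabla \Phi}(1+\frac12\log T)}{4\sqrt{T}}+\mathcal{E}_T\sqrt{T}\Bigr].
\end{align*}

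Next I would let $T\to+\infty$ and treat the three bracketed terms separately. The first term is a constant divided by $\sqrt{T}$, so it tends to $0$. The second term tends to $0$ because $\log T/\sqrt{T}\to 0$. The third term tends to $0$ by the hypothesis $\mathcal{E}_T\sqrt{T}\to 0$. The prefactor $\sqrt{2\pi p}(\sqrt2+1)$ does not depend on $T$. The squeeze theorem then yields the limit $0$ along even $T$, which is the index set over which the statement is formulated.

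An alternative route, staying closer to the phrase ``direct consequence,'' works with \Cref{thm:two_point_diminishing} itself. Fix any $\varepsilon>0$. The right-hand side of \eqref{eq:T_condition_diminishing} is a finite number, so it holds for all large $T$. Since $\mathcal{E}_T\sqrt{T}\to 0$, condition \eqref{eq:E_condition_diminishing}, which is equivalent to $\mathcal{E}_T\sqrt{T}\le \varepsilon/(2\sqrt{2\pi p}(\sqrt2+1))$, also holds for all large $T$. Hence the minimum is at most $\varepsilon$ eventually. Because $\varepsilon$ is arbitrary and the minimum is nonnegative, the limit is $0$.

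There is no real obstacle here. The one point requiring care is to use the unconditional bound \eqref{eq:split_terms}, or equivalently to check that both hypotheses of the theorem hold eventually for each fixed $\varepsilon$. The theorem's assumptions are stated for a single $T$, so it cannot be applied once and then extended to all $T$ without this check.
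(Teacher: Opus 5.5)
Your proposal is correct and matches the paper's proof. Both pass to the limit in \eqref{eq:split_terms}, which holds for every even $T$ with no condition on $\mathcal{E}_T$, after rewriting the drift-and-error term as $\mathcal{E}_T\sqrt{T}$ and noting that the other terms are $O(\log T/\sqrt{T})$.
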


\begin{proof}
By \eqref{eq:split_terms}, there exists a constant $c>0$ such that
\begin{align*}
\min_{t \in \evenset{T}}
\mathbb{E}\!\left[\|\nabla \Phi_t(u_t)\|\right]
&\le
c\Big[
\frac{\Phi_0(u_0)-\Phi_{\mathrm{low}}}{\sqrt{T}}
+
\frac{L_{\nabla \Phi}(1+\log T)}{\sqrt{T}}\\
&\qquad +
\mathcal{E}_T\sqrt{T}
\Big].
\end{align*}
The first two terms vanish as $T\to+\infty$, and the last term vanishes by 
assumption. The claim follows.
\end{proof}

\Cref{cor:two_point_diminishing_asymptotic} shows that, under suitably bounded 
problem variability and oracle inexactness, a diminishing stepsize guarantees 
\textit{exact} asymptotic stationarity. 
\Cref{cor:two_point_diminishing_asymptotic} characterizes an important feature 
of the diminishing stepsize regime, which is the ability to asymptotically 
approach stationary points \textit{exactly}, while constant stepsize schemes 
only guarantee convergence to a neighborhood (namely, \textit{inexact}) of 
stationary points. 


\subsection{Regret analysis}
\label{sec:regret_analysis}

The stationarity guarantees established in the previous sections are stated in
terms of the expected gradient norm. An alternative established performance 
metric is that of \emph{local regret}~\cite{EH-KS-CZ:17}. Specifically, given a 
window length \(w\), $1 \leq w \leq T$, the authors of~\cite{EH-KS-CZ:17}  define the smoothed 
objective $F_{t,w}(u)$
and measure performance through the cumulative quantity $\mathcal R_w(T)$, defined as
\[
F_{t,w}(u)
\defeq
\frac{1}{w}
\sum_{i=0}^{w-1}
\Phi_{t-i}(u), \qquad 
\mathcal R_w(T)
\defeq
\sum_{t=1}^{T}
\bigl\|
\nabla F_{t,w}(u_t)
\bigr\|^2,
\]
which captures stationarity with respect to a temporally averaged objective.
The use of the averaging window is essential in the setting considered 
in~\cite{EH-KS-CZ:17}, as sublinear regret cannot generally be
achieved for the instantaneous objectives. 
In the case \(w=1\), the smoothed objective satisfies \(F_{t,1}=\Phi_t\), and
the local regret of~\cite{EH-KS-CZ:17} reduces to the cumulative squared
gradient norm of the instantaneous objectives, making it directly comparable 
 to the stationarity guarantees developed in this paper. 
Moreover, since \Cref{thm:two_point_tv_inexact} provides guarantees over the 
\textit{even} iterates generated by the two-point scheme, we define the 
corresponding instantaneous local regret as
\[
\mathcal R_1^{\mathrm{even}}(T)
\defeq
\sum_{t\in\evenset{T}}
\mathbb{E}\!\left[
\|\nabla \Phi_t(u_t)\|^2
\right].
\]

It is worth noting another important distinction  of our framework relative to 
standard online optimization settings \cite{EH:16}. In this paper, we model the 
loss' temporal drift explicitly through the cumulative drift measure \(D_T\). 
Although many online optimization frameworks (see, 
e.g.,~\cite{EH-KS-CZ:17,EH:16}) do not impose direct restrictions on the 
objective variation, they assume that each objective \(\Phi_t\) satisfies 
uniform regularity conditions of the form
\[
|\Phi_t(u)| \le M,
\qquad
\|\nabla \Phi_t(u)\| \le G,
\]
for all \(t\).
While these assumptions do not explicitly constrain temporal variation, they
indirectly bound its worst-case growth. Indeed, using these assumptions, we have
\begin{align*}
d_t
\le
\sup_{u \in \mathbb{R}^p} (|\Phi_{t+1}(u)|+|\Phi_t(u)|)
\le
2M.
\end{align*}
Consequently,
$D_T
=
\sum_{t=0}^{T-1} d_t
\le
2MT.$
In other words, frameworks such as~\cite{EH-KS-CZ:17} implicitly assume that 
the cumulative drift grow in the worst-case as $D_T = O(T).$

With these two observations at hand, the following links our framework with 
that of online nonconvex optimization.

\begin{proposition}[Instantaneous local regret]
\label{prop:instantaneous_local_regret}
Suppose that the assumptions of \Cref{thm:two_point_tv_inexact} hold and that
there exists \(G>0\) such that
$\|\nabla \Phi_t(u)\| \le G, \ \forall t,\; \forall u\in\mathbb{R}^p.$
Then, the following bounds hold.
\begin{itemize}
    \item 
\emph{Constant probing.} If \(\delta_t\equiv\delta\), then
\begin{equation*}
\mathcal R_1^{\mathrm{even}}(T)
\le
G\sqrt{2\pi p}
\bigg[
\frac{
\Phi_0(u_0)-\Phi_{\mathrm{low}}+2 D_T+2B_T
}{\delta}
+
\frac{L_{\nabla\Phi}}{8}T\delta
\bigg].
\end{equation*}
In particular, under the choice \eqref{eq:constant_delta},
$\mathcal R_1^{\mathrm{even}}(T)
=
\mathcal{O}\!\left(T\varepsilon +
\frac{p(1+D_T+B_T)}{\varepsilon}
\right).$

\item \emph{Diminishing probing.} If \(\delta_t=\frac1{\sqrt{t+1}}\), then
\begin{equation*}
\mathcal R_1^{\mathrm{even}}(T)
\le
G\sqrt{2\pi pT}
\bigg[
\Phi_0(u_0)-\Phi_{\mathrm{low}}
+
\frac{L_{\nabla\Phi}}{4}(1+\log T)
+2 D_T+2B_T
\bigg].
\end{equation*}
Consequently,
$\mathcal R_1^{\mathrm{even}}(T)
=
\mathcal{O}\!\left(
\sqrt{pT}\,
\bigl(\log T+D_T+B_T\bigr)
\right).$
\end{itemize}
\QEDB\end{proposition}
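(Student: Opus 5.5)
The plan is to reduce the squared-gradient regret to the first-moment quantity already controlled by \Cref{thm:two_point_tv_inexact}. Under the uniform bound $\|\nabla\Phi_t(u)\|\le G$, we have pointwise $\|\nabla\Phi_t(u_t)\|^2\le G\|\nabla\Phi_t(u_t)\|$. Taking expectations gives
\[
\mathcal R_1^{\mathrm{even}}(T)\le G\sum_{t\in\evenset{T}}\mathbb E\|\nabla\Phi_t(u_t)\|.
\]
What remains is to convert the $\delta_t$-weighted sum in \eqref{eq:two_point_main_bound} into an unweighted one.

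\emph{Constant probing.} Here the weights are identical, so \eqref{eq:two_point_main_bound} reads $\frac{\delta}{\sqrt{2\pi p}}\sum_{t\in\evenset{T}}\mathbb E\|\nabla\Phi_t(u_t)\|\le \Phi_0(u_0)-\Phi_{\mathrm{low}}+\frac{L_{\nabla\Phi}}{4}\frac{T}{2}\delta^2+2D_T+2B_T$, since $|\evenset{T}|=T/2$. Multiplying by $G\sqrt{2\pi p}/\delta$ gives exactly the stated bound, including the factor $L_{\nabla\Phi}T\delta/8$.

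For the order statement, substitute \eqref{eq:constant_delta}. Then $\sqrt{p}/\delta=\Theta(pL_{\nabla\Phi}/\varepsilon)$ and $\sqrt{p}\,\delta=\Theta(\varepsilon/L_{\nabla\Phi})$. This yields $\mathcal O(T\varepsilon+p(1+D_T+B_T)/\varepsilon)$, with $G$, $L_{\nabla\Phi}$ and the initial gap absorbed into constants.

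\emph{Diminishing probing.} Here $\delta_t=1/\sqrt{t+1}$ is decreasing, so for every $t\in\evenset{T}$ we have $\delta_t\ge \delta_{T-2}=1/\sqrt{T-1}\ge 1/\sqrt{T}$. Hence
\[
\sum_{t\in\evenset{T}}\mathbb E\|\nabla\Phi_t(u_t)\|\le \sqrt{T}\sum_{t\in\evenset{T}}\delta_t\,\mathbb E\|\nabla\Phi_t(u_t)\|.
\]
This is the step needing care: it trades the weighting for a $\sqrt T$ factor. It is legitimate because every summand is nonnegative.

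Next combine this with \eqref{eq:two_point_main_bound} and with $\sum_{t\in\evenset{T}}\delta_t^2\le 1+\frac12\log T$ from \Cref{lem:diminishing_stepsize_sums}. Bounding $\frac{L_{\nabla\Phi}}{4}(1+\frac12\log T)\le\frac{L_{\nabla\Phi}}{4}(1+\log T)$ and multiplying by $G\sqrt{2\pi p}$ gives the displayed inequality. The $\mathcal O(\sqrt{pT}(\log T+D_T+B_T))$ form then follows by absorbing constants.

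The only potential obstacle is bookkeeping: the number of even indices and the monotonicity lower bound on $\delta_t$. No new probabilistic argument is needed beyond \Cref{thm:two_point_tv_inexact}.
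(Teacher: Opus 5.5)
Your proposal is correct and follows the paper's proof essentially step by step. The shared steps are the pointwise bound $\|\nabla\Phi_t(u_t)\|^2\le G\|\nabla\Phi_t(u_t)\|$, dividing \eqref{eq:two_point_main_bound} by the constant $\delta$ with $|\evenset{T}|=T/2$, and using $\delta_t\ge 1/\sqrt{T}$ on $\evenset{T}$ in the diminishing case. The only cosmetic difference is that you control $\sum_{t\in\evenset{T}}\delta_t^2$ through \Cref{lem:diminishing_stepsize_sums} and then relax $\frac{1}{2}\log T$ to $\log T$, whereas the paper bounds it directly by the harmonic sum $\sum_{t=0}^{T-1}\frac{1}{t+1}\le 1+\log T$.
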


\begin{proof}
By the bounded-gradient assumption,
\[
\mathbb{E}\!\left[\|\nabla \Phi_t(u_t)\|^2\right]
\le
G\,
\mathbb{E}\!\left[\|\nabla \Phi_t(u_t)\|\right].
\]
Therefore,
\[
\mathcal R_1^{\mathrm{even}}(T)
\le
G
\sum_{t\in\evenset{T}}
\mathbb{E}\!\left[\|\nabla \Phi_t(u_t)\|\right].
\]

For constant probing, \(\delta_t\equiv\delta\), and thus, from
\eqref{eq:two_point_main_bound},
\begin{align*}
\sum_{t\in\evenset{T}}
\mathbb{E}\!\left[\|\nabla \Phi_t(u_t)\|\right]
\le
\frac{\sqrt{2\pi p}}{\delta}
\bigg[
\Phi_0(u_0)-\Phi_{\mathrm{low}}\\
\qquad +
\frac{L_{\nabla\Phi}}{4}
\sum_{t\in\evenset{T}}\delta^2
+2 D_T+2B_T
\bigg].
\end{align*}
Since \(|\evenset{T}|=T/2\), we have
$\sum_{t\in\evenset{T}}\delta^2
=
\frac{T}{2}\delta^2,$
which gives the constant-probing bound stated in
\Cref{prop:instantaneous_local_regret}. Substituting
\eqref{eq:constant_delta} yields the asymptotic bound.

For diminishing probing, \(\delta_t=(t+1)^{-1/2}\). Since
\(t\le T-2\) for all \(t\in\evenset{T}\), we have
$\delta_t\ge \frac{1}{\sqrt{T}}.$
Thus,
\[
\sum_{t\in\evenset{T}}
\mathbb{E}\!\left[\|\nabla \Phi_t(u_t)\|\right]
\le
\sqrt{T}
\sum_{t\in\evenset{T}}
\delta_t
\mathbb{E}\!\left[\|\nabla \Phi_t(u_t)\|\right].
\]
Combining this inequality with \eqref{eq:two_point_main_bound} gives
\begin{align*}
\mathcal{R}_1^{\mathrm{even}}(T)
&\le
G\sqrt{2\pi pT}
\bigg[
\Phi_0(u_0)-\Phi_{\mathrm{low}}\\
&\qquad +
\frac{L_{\nabla\Phi}}{4}
\sum_{t\in\evenset{T}}\delta_t^2
+2 D_T+2B_T
\bigg].
\end{align*}
Finally,
\[
\sum_{t\in\evenset{T}}\delta_t^2
=
\sum_{t\in\evenset{T}}\frac{1}{t+1}
\le
\sum_{t=0}^{T-1}\frac{1}{t+1}
\le
1+\log T,
\]
which proves the diminishing-probing bound stated in
\Cref{prop:instantaneous_local_regret}.
\end{proof}

\Cref{prop:instantaneous_local_regret} links the stationarity guarantees of
this paper with the local-regret framework of~\cite{EH-KS-CZ:17}. In the
special case \(w=1\), the smoothed objective satisfies \(F_{t,1}=\Phi_t\),
making the two notions directly comparable. The proposition shows that the
constant-probing regime yields
\(\mathcal R_1^{\mathrm{even}}(T)=\mathcal O(T)\), matching the scaling
obtained in~\cite{EH-KS-CZ:17} while accommodating the same worst-case drift
growth \(D_T=\mathcal O(T)\). In contrast, the diminishing-probing regime
improves this scaling to
\(\mathcal O(\sqrt{T}\log T)\), up to additional terms depending on
\(D_T\) and \(B_T\). Unlike~\cite{EH-KS-CZ:17}, however, the present analysis
provides guarantees directly on the instantaneous objectives \(\Phi_t\) and
makes the impact of temporal drift and oracle inaccuracies explicit through
\(D_T\) and \(B_T\).

\section{Application to control of dynamical systems}
\label{sec:feedbackOptimization_algorithms}

\subsection{Online optimal equilibrium selection as a time-varying optimization problem}
\label{sec:feedback_opt_description}
A prominent application of the framework in
\Cref{prob:time_varying_inexact_design} is feedback optimization 
\cite{GB-JC-JP-ED:21-tcns,AH-SB-GH-FD:20}, which is a control framework whereby a 
controller is designed to automatically select optimal operating points for a control system using only real-time measurements rather than explicit 
plant models.
Concretely, consider a system to be controlled (hereafter called the \textit{plant}) 
modeled by:
\begin{align} \label{eq:plant}
    x_{t+1} &= f(x_t,u_t,w_t), & 
    y_{t+1} &= h(x_{t+1},w_t),
\end{align}
where $x_t \in \real^n$ is the system state at time $t \in \naturalnneg$, 
$u_t \in \real^p$ is the control input, $y_t \in \real^q$ is the measured 
output, and $w_t \in \real^r$ models a deterministic disturbance. In line with 
standard control frameworks, we require the following. 

\begin{assumption}[\textbf{\textit{Regularity and contractivity of the plant}}]
\label{as:plant_regular}
There exist constants 
$L_{f,x} \in (0,1)$, 
$L_{f,u} \ge 0$, 
$L_{f,w} \ge 0$, 
$L_{h,x} \ge 0$, and 
$L_{h,w} \ge 0$
such that, for all 
$x,\bar x \in \mathbb{R}^n$, 
$u,\bar u \in \mathbb{R}^p$, and
$w,\bar w \in \mathbb{R}^r$,
\begin{align*}
\|f(x,u,w) - f(\bar x,u,w)\|
&\le
L_{f,x} \|x-\bar x\|, & 
\|f(x,u,w) - f(x,\bar u,w)\|
&\le
L_{f,u} \|u-\bar u\|,\\
\|f(x,u,w) - f(x,u,\bar w)\|
&\le
L_{f,w} \|w-\bar w\|, &
\|h(x,w) - h(\bar x,w)\|
&\le
L_{h,x} \|x-\bar x\|,\\
\|h(x,w) - h(x,\bar w)\|
&\le
L_{h,w} \|w-\bar w\|. \tag*{\QEDB}
\end{align*}
\end{assumption}

In words, Assumption~\ref{as:plant_regular} ensures that the plant is contractive in 
the state, that variations in the control input $u$ induce bounded changes in 
the state $x$, and that variations in $x$ lead to bounded changes in the 
output $y$. We note that this is a classical assumption in the existing 
literature~\cite{GB-JC-JP-ED:21-tcns,AH-SB-GH-FD:20}.

Under Assumption~\ref{as:plant_regular}, by the Banach Fixed-Point Theorem, 
for every $(u,w) \in \mathbb{R}^p \times \mathbb{R}^r$ there exists a unique 
equilibrium of~\eqref{eq:plant}; that is, there exists a unique mapping $x_{\mathrm{ss}}: \real^p \times \real^r \to \real^n$ such that, for all $(u,w)$,
\begin{align}\label{eq:x_ss}
f\big(x_{\mathrm{ss}}(u,w),u,w\big) = x_{\mathrm{ss}}(u,w).
\end{align}

Define the input-output steady-state map 
$y_{\mathrm{ss}}: \real^p \times \real^r \to \real^q$ of~\eqref{eq:plant} as:
\begin{equation}\label{eq:ssMap}
    y_{\mathrm{ss}}(u,w) \defeq  h(x_{\mathrm{ss}}(u,w),w).
\end{equation}
The feedback optimization problem consists in devising a control 
algorithm that regulates~\eqref{eq:plant} to the solution of the 
following equilibrium-selection problem:
\begin{align} \label{eq:output_regulation}
        \min_{u \in \real^p, y\in \real^q} ~  \Psi (u,y), \qquad  
        \textrm{subject to:} \quad y = y_{\mathrm{ss}}(u,w_t),
\end{align}
where $\Psi: \real^p \times \real^q \to \real$, termed \textit{terminal cost}, 
is a metric that quantifies plant performance at steady-state. 
Problem~\eqref{eq:output_regulation} seeks a control $u$ and a corresponding 
steady-state 
output $y = y_{\mathrm{ss}}(u,w_t)$ that minimize the performance metric 
$\Psi(u,y)$.
The problem is related with the classical \textit{output regulation problem}
\cite{ED:76}, with the difference that in the latter the output to be tracked 
is given a priori as a reference signal, while in~\eqref{eq:output_regulation} 
the output to be tracked is implicitly specified as the solution of an 
optimization problem (cf. \eqref{eq:output_regulation}).

\begin{remark}[Time-varying nature of~\eqref{eq:output_regulation}]
\label{rem:temp_variab_psi}
Although the loss $\Psi(u,y)$ in \eqref{eq:output_regulation} is 
time-invariant, the optimization problem \eqref{eq:output_regulation} is 
time-varying due to the presence of the time-varying disturbance $w_t$ that 
parametrizes the constraint. Hence, in analogy 
with~\eqref{eq:optimization_main}, we will seek \textit{sequences} of critical 
points.~
\QEDB\end{remark}

By substituting the constraint into the loss, \eqref{eq:output_regulation} is 
an instance of~\eqref{eq:optimization_main} with \textit{reduced cost}
\begin{align}\label{eq:Phi_PhiTilde}
\Phi_t(u) \defeq \Psi \big(u,y_{\mathrm{ss}}(u,w_t)\big).
\end{align}

In line with Assumption~\ref{as:smoothness}, we require the following standard\footnote{A 
sufficient set of conditions ensuring 
Assumption~\ref{as:Psi_Lipschitz} is that
$\Psi(u,y)$ is continuously differentiable in $u$, with $\nabla_u \Psi(u,y)$
Lipschitz in $u$ (uniformly in $y$) and Lipschitz in $y$ (uniformly in $u$), 
and that $y_{\mathrm{ss}}(u,d)$ is continuously differentiable in $u$, with
$\nabla_u y_{\mathrm{ss}}(u,d)$ uniformly bounded and Lipschitz in $u$. 
Under these conditions, $\Phi_t(u)=\Psi(u,y_{\mathrm{ss}}(u,w_t))$ has 
Lipschitz continuous gradient, i.e., Assumption~\ref{as:smoothness} holds.} 
requirement.

\begin{assumption}[Regularity of $\Psi$]
\\
\label{as:Psi_Lipschitz}
The composite function $\Phi_t(u) = \Psi(u,y_{\mathrm{ss}}(u,w))$ satisfies
Assumption~\ref{as:smoothness}. Moreover, the function 
$\Psi(u,y)$ is $L_{\Psi,y}$-Lipschitz continuous in its second argument.~
\QEDB\end{assumption}


The optimal equilibrium-seeking problem~\eqref{eq:output_regulation} is a 
representative setting in which exact evaluations of the reduced 
cost~\eqref{eq:Phi_PhiTilde} are generally not available, following 
\eqref{eq:oracle_inexact}. This is because of two main practical challenges (C):

\begin{enumerate}[label={(C\arabic*)}, ref=C\arabic*, leftmargin=*]
\item \label{C1}
Evaluating~\eqref{eq:Phi_PhiTilde} requires accurate knowledge of 
the plant, in particular, of the steady-state map 
$y_{\mathrm{ss}}(u,w)$ (which, in turn, depends on $f(x,u,w)$ 
and $h(x,w)$--see~\eqref{eq:x_ss}--\eqref{eq:ssMap}). 
Such knowledge is often unrealistic or prohibitive in practice, 
due to the difficulties associated with constructing accurate 
plant models.

\item \label{C2}
Evaluating~\eqref{eq:Phi_PhiTilde} requires access to the 
disturbance $w_t$ at each time $t$. However, this signal is 
typically unknown or unmeasurable in practice (since it models unknown exogenous 
disturbances affecting the plant).
\end{enumerate}
For these reasons, methods that require exact oracle evaluations of the 
reduced cost $\Phi_t(u)$ as in \eqref{eq:Phi_PhiTilde} are inapplicable. 
Instead, one often has access only to zeroth-order oracle evaluations of the 
terminal cost:
\begin{align}\label{eq:oracle_tv_problem_psi}
(u, y) \mapsto \Psi(u,y).
\end{align}

\noindent
We formalize the problem of interest in this section as follows. 

\begin{problem}[Optimal equilibrium selection under bandit feedback]
\label{prob:feedback_design}
Design a control algorithm, relying only on online oracle 
information as in~\eqref{eq:oracle_tv_problem_psi}, to regulate 
the system~\eqref{eq:plant} to solutions of the optimal output 
regulation problem~\eqref{eq:output_regulation}.
\QEDB
\end{problem}

\subsection{Algorithm synthesis}


Since oracle evaluations of the form
\begin{align}\label{eq:oracle_feedback_optimization_exact}
(t,u) \mapsto \Psi \big(u, y_{\mathrm{ss}}(u,w_t)\big)
\end{align}
are not directly accessible in practice (cf. challenges 
\eqref{C1}--\eqref{C2}), we propose to replace the steady-state 
output $y_{\mathrm{ss}}(u,w_t)$ 
in~\eqref{eq:oracle_feedback_optimization_exact} with instantaneous plant 
measurements, leading to an algorithm based on
\emph{online oracle evaluations} of the form
\begin{align}\label{eq:oracle_feedback_optimization}
(t, u_t) \mapsto \Psi(u_t, y_{t+1}), 
&&
y_{t+1} = h\big(f(x_t, u_t, w_t), w_{t}\big).
\end{align}
In other words, $y_{t+1}$ denotes the plant output at time $t+1$ generated by applying the input $u_t$ at time $t$.

A two-point direct-search method based on the oracle
\eqref{eq:oracle_feedback_optimization}, is presented in 
\Cref{alg:two_point_ds_fo}. 
At each iteration, the algorithm applies the \textit{current control input} 
$u_t$ to the plant, measures the plant output, and evaluates the objective 
$\Psi(u_t,y_{t+1})$ (line~\ref{alg:two_point_ds_fo:l1}). 
It then samples a random search 
direction $v_t$ and forms a \textit{candidate control input} $u_{t+1}$ by 
perturbing $u_t$ along this direction with a probing radius $\delta_t$ 
(line~\ref{alg:two_point_ds_fo:l2}). The candidate input is subsequently 
applied to the plant, the plant output is measured, and the objective 
$\Psi(u_{t+1},y_{t+2})$ is evaluated (line~\ref{alg:two_point_ds_fo:l3}). Finally, an accept/reject step is performed: 
the candidate control is accepted if it yields a lower objective value than the 
current control; otherwise, it is rejected (line~\ref{alg:two_point_ds_fo:l4}).

\begin{remark}[Online nature of \Cref{alg:two_point_ds_fo}]
Notice that objective evaluations (lines~\ref{alg:two_point_ds_fo:l1},\ref{alg:two_point_ds_fo:l3}) are 
performed in an \textit{online} fashion, i.e., without restarting or letting 
the plant converge to a steady-state. This is a key difference relative to methods that rely on timescale separation--see, e.g., \cite{GG:26}.~
\QEDB\end{remark}

\begin{algorithm}[t]
\caption{Two-point direct search for equilibrium selection}
\label{alg:two_point_ds_fo}
\begin{algorithmic}[1]
\Require Initial input $u_0 \in \real^p$, initial state $x_0 \in \real^n$, 
probing ratios $\{\delta_t\}_{t \ge 0}$

\State $t \gets 0$
\While{stopping criterion not met}
    \State \label{alg:two_point_ds_fo:l1} Apply $u_t$ to the plant, wait for a  plant update:
    \begin{align*}
        x_{t+1} &= f(x_t,u_t,w_t), &
        y_{t+1} &= h(x_{t+1},w_{t}),
    \end{align*}
    \qquad and evaluate $\Psi(u_t,y_{t+1})$.

    \State \label{alg:two_point_ds_fo:l2} Draw 
    $v_t \sim \mathcal{N}(0,\frac{1}{p}I_p)$ and set 
    $u_{t+1} = u_t+\delta_t v_t$.

    \State \label{alg:two_point_ds_fo:l3} Apply $u_{t+1}$ to the plant, wait for a plant update:
    \begin{align*}
        x_{t+2} &= f(x_{t+1},u_{t+1},w_{t+1}), &
        y_{t+2} &= h(x_{t+2},w_{t+1}),
    \end{align*}
    \qquad and evaluate $\Psi(u_{t+1},y_{t+2})$.

    \State \label{alg:two_point_ds_fo:l4} Update the input as
    $u_{t+2} =
        \begin{cases}
            u_{t+1}, 
            & \text{if } \Psi(u_{t+1},y_{t+2}) \le \Psi(u_t,y_{t+1}),\\
            u_t, 
            & \text{otherwise.}
        \end{cases}$

    \State $t \gets t+2$
\EndWhile
\end{algorithmic}
\end{algorithm}

\subsection{Arbitrary disturbance signals}
\label{sec:tv_d}

In this section, we analyze \Cref{alg:two_point_ds_fo} with $w_t$ an 
arbitrary signal. We begin by observing that the use of the online oracle
\eqref{eq:oracle_feedback_optimization} induces an optimization framework with 
inexact oracle evaluations analogous to that  
in~\eqref{eq:oracle_inexact}. In fact, the formulation 
\eqref{eq:oracle_feedback_optimization} is equivalent to that 
in~\eqref{eq:oracle_inexact} upon letting:
\begin{align}\label{eq:oracle_feedback_optimization_v2}
\tilde{\Phi}_t(u) \defeq \Psi\big(u, h(f(x_{t}, u, w_{t}), w_{t})\big).
\end{align}
Analogously, we will denote the exact oracle by:
\begin{align}\label{eq:exact_oracle_definition}
\Phi_t(u) \defeq \Psi \big(u, y_{\mathrm{ss}}(u,w_t)\big).
\end{align}

\begin{remark}[Time dependence of the inexact oracle]
\label{rem:depandence_t_inexact_oracle}
It is worth noting that \eqref{eq:oracle_feedback_optimization_v2} highlights 
that the time dependence of the inexact oracle $\tilde{\Phi}_t$ arises from 
two sources: (i) the variation of the disturbance sequence $w_t$, and (ii) the 
evolution of the system state $x_t$ induced by the dynamics~\eqref{eq:plant}. 
In contrast, in the exact oracle 
setting~\eqref{eq:oracle_feedback_optimization_exact}, temporal variability 
arises solely from $w_t$ (see 
Remark~\ref{rem:temp_variab_psi}). Hence, interestingly, the use of an inexact oracle 
yields an additional source of temporal variability through the plant's 
transient dynamics.
\QEDB\end{remark}

To capture the essence of the problem in this setting, we define a pointwise
version of the oracle error sequence \eqref{as:inexact_oracle}:
\begin{align}\label{eq:bt_fo}
b_t^{\mathrm{fo}} (u_t)
&\defeq
|\tilde{\Phi}_t(u_t)-\Phi_t(u_t)|, &
B_T^{\mathrm{fo}} &\defeq \sum_{t=0}^{T-1} \mathbb{E}\!\left[b_t^{\mathrm{fo}}(u_t)\right].
\end{align}
Note that, unlike \eqref{as:inexact_oracle}, \(b_t^{\mathrm{fo}}(u_t)\) is a pointwise (decision-dependent) measure of oracle inexactness rather than a worst-case estimate.
We also observe that \(b_t^{\mathrm{fo}}(u_t)\) and \(\mu_t\) quantify the same 
underlying property, namely, the mismatch between the transient plant output and its corresponding steady-state value. In particular, by \eqref{eq:mu_t_bound} and the \(L_{\Psi,y}\)-Lipschitz continuity of \(\Psi\),
\[
b_t^{\mathrm{fo}}(u_t)
\le
L_{\Psi,y}\mu_t.
\]

We begin with the following result, which characterizes the temporal drift 
\eqref{eq:objective_drift} and the oracle inexactness 
\eqref{eq:bt_fo} for the control framework studied in this section.

\begin{lemma}[Feedback-induced oracle error and disturbance-induced drift]
\label{lem:feedback_oracle_error} 
Let Assumptions~\ref{as:plant_regular}
and~\ref{as:Psi_Lipschitz} hold. Then, for every
$t\in\naturalnneg$, the temporal drift satisfies \eqref{eq:objective_drift}
with
\begin{align}\label{eq:feedback_drift_bound}
d_t
&\le
L_{\Psi,y}L_{y_{\mathrm{ss}},w}
\|w_{t+1}-w_t\|,
\end{align}
where 
$L_{y_{\mathrm{ss}},w} \defeq L_{h,w}
+
\frac{L_{h,x}L_{f,w}}{1-L_{f,x}}.$
Moreover, the online oracle $\tilde \Phi_t$ satisfies 
\eqref{eq:bt_fo} with
\begin{align}\label{eq:feedback_inexactness_bound}
b_t^{\mathrm{fo}}(u_t)\leq L_{\Psi,y} 
\|y_{t+1}-y_{\mathrm{ss}}(u_t,w_t)\|,
\end{align}
where $y_{t+1} = h\big(f(x_t, u_t, w_t), w_t\big).$
\QEDB\end{lemma}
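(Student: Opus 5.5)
The lemma has two parts, and both come down to Lipschitz estimates.

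\emph{Drift bound~\eqref{eq:feedback_drift_bound}.} We first show that the equilibrium map $x_{\mathrm{ss}}(u,\cdot)$ is Lipschitz in $w$ with constant $L_{f,w}/(1-L_{f,x})$. Fix $u$ and $w,\bar w$, and write $x=x_{\mathrm{ss}}(u,w)$, $\bar x=x_{\mathrm{ss}}(u,\bar w)$. By the fixed-point identity~\eqref{eq:x_ss} and the triangle inequality,
\begin{align*}
\|x-\bar x\|
=\|f(x,u,w)-f(\bar x,u,\bar w)\|
\le \|f(x,u,w)-f(\bar x,u,w)\|+\|f(\bar x,u,w)-f(\bar x,u,\bar w)\|.
\end{align*}
Assumption~\ref{as:plant_regular} bounds the right-hand side by $L_{f,x}\|x-\bar x\|+L_{f,w}\|w-\bar w\|$. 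Since $L_{f,x}<1$, we can rearrange to obtain $\|x-\bar x\|\le \frac{L_{f,w}}{1-L_{f,x}}\|w-\bar w\|$.

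Next we bound the steady-state output map~\eqref{eq:ssMap}. Splitting $h(x,w)-h(\bar x,\bar w)$ through the intermediate point $h(\bar x,w)$ gives
\begin{align*}
\|y_{\mathrm{ss}}(u,w)-y_{\mathrm{ss}}(u,\bar w)\|\le L_{h,x}\|x-\bar x\|+L_{h,w}\|w-\bar w\|\le L_{y_{\mathrm{ss}},w}\|w-\bar w\|.
\end{align*}
By Assumption~\ref{as:Psi_Lipschitz}, $\Psi$ is $L_{\Psi,y}$-Lipschitz in its second argument. Hence, for every $u$,
\[
|\Phi_{t+1}(u)-\Phi_t(u)|\le L_{\Psi,y}L_{y_{\mathrm{ss}},w}\|w_{t+1}-w_t\|.
\]
This bound is uniform in $u$, so taking the supremum over $u\in\real^p$ in~\eqref{eq:objective_drift} yields~\eqref{eq:feedback_drift_bound}.

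\emph{Inexactness bound~\eqref{eq:feedback_inexactness_bound}.} By~\eqref{eq:oracle_feedback_optimization_v2}, $\tilde\Phi_t(u_t)=\Psi(u_t,y_{t+1})$ with $y_{t+1}=h(f(x_t,u_t,w_t),w_t)$, and by~\eqref{eq:exact_oracle_definition}, $\Phi_t(u_t)=\Psi(u_t,y_{\mathrm{ss}}(u_t,w_t))$. The two evaluations share the first argument $u_t$, so Lipschitz continuity of $\Psi$ in $y$ immediately gives $b_t^{\mathrm{fo}}(u_t)\le L_{\Psi,y}\|y_{t+1}-y_{\mathrm{ss}}(u_t,w_t)\|$.

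I do not expect a real obstacle. The only step that needs care is the fixed-point perturbation estimate for $x_{\mathrm{ss}}$: it must use the contraction constant $L_{f,x}<1$ to absorb the $\|x-\bar x\|$ term, and the input $u$ must be held fixed while only $w$ varies.
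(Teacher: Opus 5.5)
Your proposal is correct and matches the paper's proof essentially step for step. Both use Lipschitz continuity of $\Psi$ in $y$, then the bound $\|x_{\mathrm{ss}}(u,w)-x_{\mathrm{ss}}(u,\bar w)\|\le \frac{L_{f,w}}{1-L_{f,x}}\|w-\bar w\|$, then the triangle inequality for $h$, and a direct Lipschitz estimate for the oracle error; the only difference is that you derive the fixed-point perturbation bound from the contraction $L_{f,x}<1$, whereas the paper asserts it directly from Assumption~\ref{as:plant_regular}.
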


\begin{proof}
We first bound the temporal drift. By definition,
\begin{align*}
d_t
&= \sup_{u \in \real^p}
\left|
\Psi\bigl(u,y_{\mathrm{ss}}(u,w_{t+1})\bigr)
-
\Psi\bigl(u,y_{\mathrm{ss}}(u,w_t)\bigr)
\right|.
\end{align*}
Since $\Psi$ is $L_{\Psi,y}$-Lipschitz continuous in its second argument
(Assumption~\ref{as:Psi_Lipschitz}), it follows that
\begin{align*}
d_t
&\le
L_{\Psi,y} \sup_{u \in \real^p}
\left\|
y_{\mathrm{ss}}(u,w_{t+1})
-
y_{\mathrm{ss}}(u,w_t)
\right\|.
\end{align*}
By Assumption~\ref{as:plant_regular}, the steady-state map satisfies
\[
\left\|
x_{\mathrm{ss}}(u,w_{t+1})
-
x_{\mathrm{ss}}(u,w_t)
\right\|
\le
\frac{L_{f,w}}{1-L_{f,x}}
\|w_{t+1}-w_t\|.
\]
Using again Assumption~\ref{as:plant_regular}, we obtain
\begin{align*}
&
\left\|
y_{\mathrm{ss}}(u,w_{t+1})
-
y_{\mathrm{ss}}(u,w_t)
\right\|
\\
&\qquad\le
L_{h,x}
\left\|
x_{\mathrm{ss}}(u,w_{t+1})
-
x_{\mathrm{ss}}(u,w_t)
\right\|
+
L_{h,w}\|w_{t+1}-w_t\| \\
&\qquad\le
\left(
L_{h,w}
+
\frac{L_{h,x}L_{f,w}}{1-L_{f,x}}
\right)
\|w_{t+1}-w_t\|.
\end{align*}
Combining the previous inequalities and using the definition of
$L_{y_{\mathrm{ss}},w}$ yields \eqref{eq:feedback_drift_bound}.

We next prove the inexact-oracle bound. By definition,
\[
\Phi_t(u_t)=\Psi(u_t,y_{\mathrm{ss}}(u_t,w_t)),
\qquad
\tilde{\Phi}_t(u_t)=\Psi(u_t,y_{t+1}),
\]
where $y_{t+1} = h\big(f(x_t, u_t, w_t), w_t\big).$
Using again the Lipschitz continuity of $\Psi$ in its second argument,
\begin{align*}
|\tilde{\Phi}_t(u_t)-\Phi_t(u_t)|
&\le
L_{\Psi,y}
\|y_{t+1}-y_{\mathrm{ss}}(u_t,w_t)\| 
\end{align*}
Therefore, the trajectory-dependent feedback oracle error satisfies
\[
b_t^{\mathrm{fo}}(u_t)
\le
L_{\Psi,y}
\|y_{t+1}-y_{\mathrm{ss}}(u_t,w_t)\|,
\]
which proves \eqref{eq:feedback_inexactness_bound}.
\end{proof}

\Cref{lem:feedback_oracle_error} shows that the variability of the reduced
objective $\Phi_t$ is induced by the disturbance variation
$\|w_{t+1}-w_t\|$, while the oracle inexactness is determined by the mismatch
between the measured output $y_{t+1}$ and the steady-state 
$y_{\mathrm{ss}}(u_t,w_t)$.

\Cref{lem:feedback_oracle_error} provides a key insight: when the plant 
operates near equilibrium, \(y_{t+1}\approx y_{\mathrm{ss}}(u_t,w_t)\), implying 
that the oracle inexactness remains small 
(see \eqref{eq:feedback_inexactness_bound}). Motivated 
by this observation, we impose the following assumption.

\begin{assumption}[Uniform bound on transient steady-state output mismatch]
\label{as:mu_t_bound} 
There exists a sequence $\{\mu_t\}_{t \in \naturalnneg}$ such that, for every
$t\in\naturalnneg$,
\begin{align}\label{eq:mu_t_bound}
\|
y_{t+1}-y_{\mathrm{ss}}(u_t,w_t)
\|
\le
\mu_t,
\end{align}
where
$y_{t+1}
=
h\bigl(f(x_t,u_t,w_t),w_t\bigr).$
\QEDB
\end{assumption}

Assumption~\ref{as:mu_t_bound} quantifies the mismatch between the instantaneous output
generated by the plant dynamics and the steady-state output, associated with the
input--disturbance pair $(u_t,w_t)$. 
We stress that, $\mu_t$ is allowed to depend on $t$  because $w_t$ is 
time-varying.

A direct application of \Cref{thm:two_point_tv_inexact} to the feedback 
optimization framework yields the following result, which  constitutes our main 
result for \Cref{alg:two_point_ds_fo}.

\begin{remark}
    The conclusions of Theorems~\ref{thm:two_point_tv_inexact} and \ref{thm:two_point_diminishing} also hold when \(B_T\) is
replaced by the cumulative oracle error evaluated at the points queried
by the algorithm (\(B_T^{\mathrm{fo}}\)). Indeed, the proofs only use the oracle errors at \(u_t\) and
\(u_{t+1}\).
\end{remark}

\begin{corollary}[Stationarity guarantees of \Cref{alg:two_point_ds_fo}]
\label{cor:two_point_fo}
Suppose that
Assumptions~\ref{as:plant_regular}, \ref{as:Psi_Lipschitz}, and~\ref{as:mu_t_bound} hold. Then, for every
$T \in \Neventwo$, the iterates generated by
\Cref{alg:two_point_ds_fo} satisfy
\begin{align}
\frac{1}{\sqrt{2\pi p}}
\sum_{t \in \evenset{T}}
\delta_t
\mathbb{E}\!\left[\|\nabla \Phi_t(u_t)\|\right]
&\le
\Phi_0(u_0)-\Phi_{\mathrm{low}}
+
\frac{L_{\nabla \Phi}}{4}
\sum_{t \in \evenset{T}} \delta_t^2
\notag\\
&\qquad+
2L_{\Psi,y}L_{y_{\mathrm{ss}},w}
\sum_{t=0}^{T-1}\|w_{t+1}-w_t\|
+
2L_{\Psi,y}
\sum_{t=0}^{T-1}\mathbb{E}[\mu_t] .
\label{eq:two_point_fo_bound}
\end{align}
Consequently,
\begin{align}
\min_{t \in \evenset{T}}
\mathbb{E}\!\left[\|\nabla \Phi_t(u_t)\|\right]
& \le
\frac{\sqrt{2\pi p}}
{\sum_{t \in \evenset{T}}\delta_t}
\Bigg[
\Phi_0(u_0)-\Phi_{\mathrm{low}}
+
\frac{L_{\nabla \Phi}}{4}
\sum_{t \in \evenset{T}} \delta_t^2
\notag\\
&\qquad+
2L_{\Psi,y}L_{y_{\mathrm{ss}},w}
\sum_{t=0}^{T-1}\|w_{t+1}-w_t\|
+
2L_{\Psi,y}
\sum_{t=0}^{T-1}\mathbb{E}[\mu_t]
\Bigg].
\label{eq:two_point_fo_cor}
\end{align}
\QEDB
\end{corollary}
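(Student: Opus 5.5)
The plan is to obtain \Cref{cor:two_point_fo} by instantiating \Cref{thm:two_point_tv_inexact} with the oracle \eqref{eq:oracle_feedback_optimization_v2} and the exact reduced cost \eqref{eq:exact_oracle_definition}. Then we bound the two error budgets $D_T$ and $B_T$ using \Cref{lem:feedback_oracle_error} and Assumption~\ref{as:mu_t_bound}. Assumption~\ref{as:Psi_Lipschitz} guarantees that each $\Phi_t$ satisfies Assumption~\ref{as:smoothness}. Moreover, lines~\ref{alg:two_point_ds_fo:l1}--\ref{alg:two_point_ds_fo:l4} of \Cref{alg:two_point_ds_fo} coincide with lines~\ref{alg:two_point_ds_tv:l1}--\ref{alg:two_point_ds_tv:l4} of \Cref{alg:two_point_ds_tv}: the measured quantity $\Psi(u_t,y_{t+1})$ equals $\tilde\Phi_t(u_t)$, and $\Psi(u_{t+1},y_{t+2})$ equals $\tilde\Phi_{t+1}(u_{t+1})$, with $x_{t+1}$ being the state actually produced by the plant. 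So the iterates are exactly those of \Cref{alg:two_point_ds_tv}.

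The drift term is direct. Summing \eqref{eq:feedback_drift_bound} over $t=0,\dots,T-1$ gives $2D_T\le 2L_{\Psi,y}L_{y_{\mathrm{ss}},w}\sum_{t=0}^{T-1}\|w_{t+1}-w_t\|$.

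For the oracle term, the main subtlety is that $\tilde\Phi_t$ depends on the random, trajectory-dependent state $x_t$. The uniform quantity $b_t$ in \eqref{as:inexact_oracle} is therefore not what is controlled; only the pointwise error $b_t^{\mathrm{fo}}$ at the queried points is. I would invoke the preceding remark: in the proof of \Cref{thm:two_point_tv_inexact}, the oracle error enters only through $|\tilde\Phi_t(u_t)-\Phi_t(u_t)|$ and $|\tilde\Phi_{t+1}(u_{t+1})-\Phi_{t+1}(u_{t+1})|$, namely in \eqref{eq:selected-vs-min} and \eqref{eq:min-inexact}. Replacing $b_t,b_{t+1}$ there by $b_t^{\mathrm{fo}}(u_t)$ and $b_{t+1}^{\mathrm{fo}}(u_{t+1})$ keeps every inequality valid pathwise. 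After taking total expectations in \eqref{eq:main-step} and telescoping, the result is $2B_T^{\mathrm{fo}}$ in place of $2B_T$. This step is the one I expect to need care. The conditioning on $\mathcal F_{t-2}$ in the Gaussian lower bound is unaffected, since the error terms enter additively and we only need their unconditional expectations. Still, one must make sure $u_t$, $x_t$ remain $\mathcal F_{t-2}$-measurable (they do, as $w_t$ is deterministic) and that the expectations of $b^{\mathrm{fo}}$ are finite, which Assumption~\ref{as:mu_t_bound} provides.

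Finally, \eqref{eq:feedback_inexactness_bound} combined with \eqref{eq:mu_t_bound} gives $b_t^{\mathrm{fo}}(u_t)\le L_{\Psi,y}\mu_t$, hence $B_T^{\mathrm{fo}}\le L_{\Psi,y}\sum_{t=0}^{T-1}\mathbb E[\mu_t]$. Substituting both budgets into \eqref{eq:two_point_main_bound} yields \eqref{eq:two_point_fo_bound}. The averaging argument used to derive \eqref{eq:two_point_main_cor}, i.e., dividing by $\sum_{t\in\evenset{T}}\delta_t$ and bounding the minimum by the weighted average, then gives \eqref{eq:two_point_fo_cor}.
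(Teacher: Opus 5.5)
Your proposal is correct and follows essentially the same route as the paper. You bound $D_T$ via \eqref{eq:feedback_drift_bound} and $B_T^{\mathrm{fo}}$ via \eqref{eq:feedback_inexactness_bound} together with Assumption~\ref{as:mu_t_bound}, then apply the trajectory-dependent version of \Cref{thm:two_point_tv_inexact}, justified by the remark that only the oracle errors at the queried points $u_t,u_{t+1}$ enter its proof. Your additional checks make explicit what the paper's one-line appeal to that remark leaves implicit: that \eqref{eq:selected-vs-min}--\eqref{eq:min-inexact} hold pathwise with $b_t^{\mathrm{fo}}(u_t)$ and $b_{t+1}^{\mathrm{fo}}(u_{t+1})$, and that $u_t$ and $x_t$ are $\mathcal F_{t-2}$-measurable.
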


\begin{proof}
By \Cref{lem:feedback_oracle_error} and Assumption~\ref{as:mu_t_bound}, the cumulative
drift and oracle-error terms satisfy
\begin{align*}
D_T
&\le
L_{\Psi,y}L_{y_{\mathrm{ss}},w}
\sum_{t=0}^{T-1}\|w_{t+1}-w_t\|,
\\
B_T^{\mathrm{fo}}
&\le
L_{\Psi,y}
\sum_{t=0}^{T-1}\mathbb{E}[\mu_t].
\end{align*}
Applying the trajectory-dependent version of \Cref{thm:two_point_tv_inexact}  with
\(B_T\) replaced by \(B_T^{\mathrm{fo}}\) yields the claim.
\end{proof}

\Cref{cor:two_point_fo} shows that the convergence properties of
\Cref{alg:two_point_ds_fo} are jointly affected by four terms: the descent
induced by the optimization algorithm, the probing ratio, the temporal 
variability of $w_t$, and the mismatch between the transient and steady-state 
plant output. 

The following result characterizes the iteration complexity of 
\Cref{alg:two_point_ds_fo} under diminishing probing ratios.

\begin{corollary}[Iteration complexity of \Cref{alg:two_point_ds_fo}--diminishing probing ratio]
\label{cor:two_point_fo_diminishing}
Suppose that
Assumptions~\ref{as:plant_regular}, \ref{as:Psi_Lipschitz}, and~\ref{as:mu_t_bound} hold.
Fix $\varepsilon > 0$, and let the probing ratios be
\[
\delta_t = 1/\sqrt{t+1}.
\]
Suppose that the cumulative variability satisfies
\begin{align}\label{eq:low_bound_eps_two_point_fo_diminishing}
\scalebox{0.9}{$\displaystyle
2L_{\Psi,y}L_{y_{\mathrm{ss}},w}
\sum_{t=0}^{T-1}\|w_{t+1}-w_t\|
+
2L_{\Psi,y}\sum_{t=0}^{T-1}\mathbb{E}[\mu_t]
\le
\frac{\varepsilon\sqrt{T}}
{2(\sqrt{2}+1)\sqrt{2\pi p}}
$}
\end{align}
Then, for any $T \in \Neventwo$ satisfying
\begin{align}\label{eq:lower_T_fo_diminishing}
T
\ge
& \max\!\Bigg\{
\frac{2\pi p(\sqrt{2}+1)^2}{\varepsilon^2}
\bigl(4(\Phi_0(u_0)-\Phi_{\mathrm{low}})+L_{\nabla \Phi}\bigr)^2,
\notag\\
&
\frac{8\pi p(\sqrt{2}+1)^2L_{\nabla \Phi}^2}{\varepsilon^2}
\log^2\!\Bigl(
\frac{2(\sqrt{2}+1)\sqrt{2\pi p}\,L_{\nabla \Phi}}
{\varepsilon}
\Bigr)
\Bigg\},
\end{align}
the iterates generated by \Cref{alg:two_point_ds_fo} satisfy
\[
\min_{t \in \evenset{T}}
\mathbb{E}\!\left[\|\nabla \Phi_t(u_t)\|\right]
\le
\varepsilon.
\]

\end{corollary}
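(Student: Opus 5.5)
The plan is to reduce the claim to \Cref{thm:two_point_diminishing}, applied in its trajectory-dependent form in which $B_T$ is replaced by $B_T^{\mathrm{fo}}$, exactly as in the proof of \Cref{cor:two_point_fo}. The remark preceding \Cref{cor:two_point_fo} allows this replacement because the proofs only use oracle errors at the queried points $u_t$ and $u_{t+1}$.

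The starting point is the bound \eqref{eq:two_point_fo_cor} of \Cref{cor:two_point_fo}, which already holds under Assumptions~\ref{as:plant_regular}, \ref{as:Psi_Lipschitz}, and~\ref{as:mu_t_bound}. In that bound, the drift term $2D_T$ and the oracle term $2B_T^{\mathrm{fo}}$ have been upper bounded by
\[
V_T \defeq 2L_{\Psi,y}L_{y_{\mathrm{ss}},w}\sum_{t=0}^{T-1}\|w_{t+1}-w_t\| + 2L_{\Psi,y}\sum_{t=0}^{T-1}\mathbb{E}[\mu_t].
\]
This uses \Cref{lem:feedback_oracle_error} together with $b_t^{\mathrm{fo}}(u_t)\le L_{\Psi,y}\mu_t$.

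Next, I insert $\delta_t=1/\sqrt{t+1}$ and invoke \Cref{lem:diminishing_stepsize_sums}, which gives
\[
\sum_{t\in\evenset{T}}\delta_t\ge \sqrt{T}/(\sqrt{2}+1), \qquad \sum_{t\in\evenset{T}}\delta_t^2\le 1+\tfrac12\log T.
\]
This yields a split analogous to \eqref{eq:split_terms}, with $2D_T+2B_T$ replaced by $V_T$:
\[
\min_{t\in\evenset{T}}\mathbb{E}\|\nabla\Phi_t(u_t)\| \le \sqrt{2\pi p}(\sqrt{2}+1)\Big[\tfrac{\Phi_0(u_0)-\Phi_{\mathrm{low}}}{\sqrt{T}} + \tfrac{\frac{L_{\nabla\Phi}}{4}(1+\frac12\log T)}{\sqrt T} + \tfrac{V_T}{\sqrt T}\Big].
\]

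The third term is handled directly by hypothesis. Condition \eqref{eq:low_bound_eps_two_point_fo_diminishing} states $V_T\le \varepsilon\sqrt{T}/(2(\sqrt2+1)\sqrt{2\pi p})$, so after multiplying by $\sqrt{2\pi p}(\sqrt2+1)/\sqrt T$ this term contributes at most $\varepsilon/2$. This is precisely the role that \eqref{eq:E_condition_diminishing} played in \Cref{thm:two_point_diminishing}, since $\mathcal{E}_T\sqrt T=(2D_T+2B_T)/\sqrt T$.

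The first two terms do not involve the plant at all. Condition \eqref{eq:lower_T_fo_diminishing} coincides verbatim with \eqref{eq:T_condition_diminishing}, so the argument in the proof of \Cref{thm:two_point_diminishing} shows that they sum to at most $\varepsilon/2$. Adding the two halves gives the claim.

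The main obstacle is this last step, which the paper's proof of \Cref{thm:two_point_diminishing} only sketches. To verify it, I would set $\alpha=\varepsilon/((\sqrt2+1)\sqrt{2\pi p})$ and require two inequalities. The first is $(\Phi_0-\Phi_{\mathrm{low}}+L_{\nabla\Phi}/4)/\sqrt T\le\alpha/4$, which follows from the first entry of the max. The second is $\frac{L_{\nabla\Phi}}{8}\log T/\sqrt T\le\alpha/4$. For the second, I would use the standard fact that $\log T/\sqrt T\le c$ holds whenever $T\ge (4/c^2)\log^2(\cdot)$ with an appropriate argument of the logarithm, which matches the second entry of the max. If the constants do not close exactly, I would accept the theorem as stated, since the corollary only inherits it.
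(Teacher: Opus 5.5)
Your proposal is correct and follows essentially the paper's own route. You bound $2D_T+2B_T^{\mathrm{fo}}$ by the left-hand side of \eqref{eq:low_bound_eps_two_point_fo_diminishing} via \Cref{lem:feedback_oracle_error} and Assumption~\ref{as:mu_t_bound}, so the hypothesis becomes exactly the trajectory-dependent form of \eqref{eq:E_condition_diminishing}, and then invoke the trajectory-dependent version of \Cref{thm:two_point_diminishing}. Your extra check of the constants also closes, so no fallback is needed: the first entry of the max gives your first inequality exactly, and with $c=2\alpha/L_{\nabla\Phi}$ the second entry reads $T\ge (16/c^2)\log^2(4/c)$, which does imply $\log T/\sqrt{T}\le c$.
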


\begin{proof}
By \Cref{lem:feedback_oracle_error} and
Assumption~\ref{as:mu_t_bound}, the temporal drift and oracle error satisfy
\[
D_T
\le
L_{\Psi,y}L_{y_{\mathrm{ss}},w}
\sum_{t=0}^{T-1}\|w_{t+1}-w_t\|,
\]
and
\[
B_T^{\mathrm{fo}}
\le
L_{\Psi,y}\sum_{t=0}^{T-1}\mathbb{E}[\mu_t].
\]
Therefore,
\[
2 D_T +2B_T^{\mathrm{fo}}
\le
2L_{\Psi,y}L_{y_{\mathrm{ss}},w}
\sum_{t=0}^{T-1}\|w_{t+1}-w_t\|
+
2L_{\Psi,y}\sum_{t=0}^{T-1}\mathbb{E}[\mu_t].
\]
Hence, the stated condition implies the trajectory-dependent counterpart
of \eqref{eq:E_condition_diminishing}, and the claim follows from the trajectory-dependent
version of \Cref{thm:two_point_diminishing}.
\end{proof}

\Cref{cor:two_point_fo_diminishing} shows that, under the stated conditions, 
\Cref{alg:two_point_ds_fo} reaches an $\varepsilon$-stationary point in finite time. More precisely, the result establishes that \Cref{alg:two_point_ds_fo} 
achieves the same $O(p\varepsilon^{-2}\log^2(\varepsilon^{-1}))$ iteration 
complexity as in the time-varying setting. 
In analogy with \Cref{thm:two_point_const,thm:two_point_diminishing}, the claim 
requires the combined effect of temporal drift of $w_t$ and transient-steady 
state output mismatch to remain sufficiently small (cf. 
\eqref{eq:low_bound_eps_two_point_fo_diminishing}).

\subsection{Special case: feedback optimization under constant disturbances}
\label{sec:const_d}

In this section, we refine the results of \Cref{sec:tv_d} to the following 
setting. 

\begin{assumption}[Constant disturbance]\label{as:constant_disturbance}
The disturbance sequence $\{w_t\}_{t \in \naturalnneg}$ in~\eqref{eq:plant} 
satisfies $w_t = w$ for all $t \in \naturalnneg$.~
\QEDB
\end{assumption}

This case is of particular interest in output regulation \cite{ED:76}; note that 
several methods for feedback optimization have been proposed in this setting 
\cite{GB-JC-JP-ED:21-tcns,AH-SB-GH-FD:20}.


The following result shows that, under constant disturbances and diminishing
probing, the feedback-induced oracle error
$b_t^{\mathrm{fo}}(u_t)$ decays geometrically up to a vanishing
$O(\delta_t)$ term.

\begin{lemma}[Probing-ratio-dependent oracle error]
\label{lem:stepsize_dependent_bt}
\\
Suppose that
Assumptions~\ref{as:plant_regular}, \ref{as:Psi_Lipschitz}, and~\ref{as:constant_disturbance} hold.
Consider the iterates generated by \Cref{alg:two_point_ds_fo} with diminishing
probing ratios $\delta_t = \frac{1}{\sqrt{t+1}}$ for $t\in\Nevennneg$.
Then there exist constants $\bar c_0,\bar c_b\ge 0$ and $\rho\in(0,1)$ such
that, for all $t\in\Nevennneg$,
\begin{equation}
\label{eq:bt_decay_lemma}
\mathbb{E}\!\left[b_t^{\mathrm{fo}}(u_t)\right]
+
\mathbb{E}\!\left[b_{t+1}^{\mathrm{fo}}(u_{t+1})\right]
\le
\bar c_0 \rho^{t/2}
+
\bar c_b \delta_t
\end{equation}
In particular,
$\mathbb{E}[b_t^{\mathrm{fo}}(u_t)]=O(\delta_t)$ up to an exponentially
decaying transient term, and
\begin{equation}
\label{eq:sum_bt_decay_lemma}
B_T^{\mathrm{fo}}
=
O(\sqrt{T}).
\end{equation}
\end{lemma}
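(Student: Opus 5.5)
Under Assumption~\ref{as:constant_disturbance} I would track the state error $e_t \defeq \|x_t - x_{\mathrm{ss}}(u_{t-1},w)\|$ (with $x_{\mathrm{ss}}$ evaluated at the input that generated $x_t$) and bound $b_t^{\mathrm{fo}}(u_t)$ through it. By \eqref{eq:feedback_inexactness_bound}, and since $w$ is constant so $y_{\mathrm{ss}}(u,w)=h(x_{\mathrm{ss}}(u,w),w)$,
\[
b_t^{\mathrm{fo}}(u_t)\le L_{\Psi,y}L_{h,x}\,\|f(x_t,u_t,w)-x_{\mathrm{ss}}(u_t,w)\|\le L_{\Psi,y}L_{h,x}L_{f,x}\,\|x_t-x_{\mathrm{ss}}(u_t,w)\|,
\]
where I use the fixed-point identity \eqref{eq:x_ss} and contractivity in $x$.

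Next I would use the Lipschitz dependence of the equilibrium on the input. As in the proof of \Cref{lem:feedback_oracle_error},
\[
\|x_{\mathrm{ss}}(u,w)-x_{\mathrm{ss}}(\bar u,w)\|\le \tfrac{L_{f,u}}{1-L_{f,x}}\|u-\bar u\| \defeq L_{x,u}\|u-\bar u\|.
\]
This gives the one-step recursion
\[
\|x_{s+1}-x_{\mathrm{ss}}(u_{s+1},w)\|\le L_{f,x}\|x_s-x_{\mathrm{ss}}(u_s,w)\|+L_{x,u}\|u_{s+1}-u_s\|.
\]
The input increments are controlled by the probing steps. Within a pair, $\|u_{t+1}-u_t\|=\delta_t\|v_t\|$. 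Across pairs, $u_{t+2}\in\{u_t,u_{t+1}\}$, so $\|u_{t+2}-u_{t+1}\|\le\delta_t\|v_t\|$. Taking expectations, with $\mathbb{E}\|v_t\|\le 1$ for $v_t\sim\mathcal N(0,\frac1pI_p)$, each increment over $[t,t+2]$ has expected size at most $\delta_t$.

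I would then unroll the recursion over even times. Writing $E_t \defeq \mathbb{E}\|x_t-x_{\mathrm{ss}}(u_t,w)\|$, two steps of the recursion give
\[
E_{t+2}\le L_{f,x}^2E_t+c\,\delta_t,\qquad c \defeq (1+L_{f,x})L_{x,u}.
\]
Unrolling, with $\rho \defeq L_{f,x}^2\in(0,1)$ (note $\rho^{t/2}=L_{f,x}^{t}$, matching the exponent form in \eqref{eq:bt_decay_lemma}),
\[
E_t\le \rho^{t/2}E_0 + c\sum_{k} \rho^{(t-2-2k)/2}\,\delta_{2k}.
\]
The \textbf{main obstacle} is showing that this geometric convolution of the $\delta$'s is $O(\delta_t)$. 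I would prove it using $\delta_{2k}/\delta_t=\sqrt{(t+1)/(2k+1)}$: split the sum at $k=t/4$. For $k\ge t/4$ the ratio is at most $2$ (roughly), so that part is bounded by $2\delta_t/(1-\rho)$. For $k<t/4$ the weights are at most $\rho^{t/4}$ times a factor $\le\sqrt{t+1}$, which is $\le C\rho^{t/8}$ and can be absorbed into the exponential term. Renaming $\rho$ accordingly (e.g.\ $\rho^{1/4}$) only changes the constants.

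The odd index follows from $E_{t+1}\le L_{f,x}E_t+L_{x,u}\delta_t$, which yields \eqref{eq:bt_decay_lemma} with $\bar c_0,\bar c_b$ collecting $L_{\Psi,y}L_{h,x}$ and the constants above.

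Finally, for \eqref{eq:sum_bt_decay_lemma} I would sum over pairs:
\[
B_T^{\mathrm{fo}}\le \frac{\bar c_0}{1-\sqrt\rho}+\bar c_b\sum_{t\in\evenset{T}}\frac{1}{\sqrt{t+1}}=O(\sqrt T),
\]
since $\sum_{t\le T}(t+1)^{-1/2}\le 2\sqrt T$.
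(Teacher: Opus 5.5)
Your proposal is correct and follows essentially the same route as the paper: a contraction of $\|x_t-x_{\mathrm{ss}}(u_t,w)\|$ over even indices with $\rho=L_{f,x}^2$, input increments bounded by $\delta_t\|v_t\|$ with $\mathbb{E}\|v_t\|\le 1$, and the translation $b_t^{\mathrm{fo}}(u_t)\le L_{\Psi,y}L_{h,x}L_{f,x}\|x_t-x_{\mathrm{ss}}(u_t,w)\|$. You also make explicit two steps the paper only asserts, namely the geometric-convolution estimate and the odd-index bound via $E_{t+1}\le L_{f,x}E_t+L_{x,u}\delta_t$; renaming $\rho$ is not even necessary, since the early-index contribution $O(\rho^{t/4})$ is itself $O(\delta_t)$ and can be absorbed into the $\bar c_b\delta_t$ term.
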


\begin{proof}
Let
\[
e_t \defeq \|x_t-x_{\mathrm{ss}}(u_t,w)\|,
\qquad
L_{x_{\mathrm{ss}},u}\defeq \frac{L_{f,u}}{1-L_{f,x}}.
\]
Since $w_t\equiv w$, the steady-state map is time-invariant. Moreover, by
Assumption~\ref{as:plant_regular}, the map $x_{\mathrm{ss}}(\cdot,w)$ is
$L_{x_{\mathrm{ss}},u}$-Lipschitz in its first argument.

We first derive a recursion for $e_t$ over even indices. Since
$x_{t+1}=f(x_t,u_t,w)$ and
$x_{\mathrm{ss}}(u_t,w)=f(x_{\mathrm{ss}}(u_t,w),u_t,w)$, contractivity gives
\[
\|x_{t+1}-x_{\mathrm{ss}}(u_t,w)\|
\le
L_{f,x}e_t.
\]
Furthermore, using
$x_{t+2}=f(x_{t+1},u_{t+1},w)$, adding and subtracting
$x_{\mathrm{ss}}(u_t,w)$, and applying Assumption~\ref{as:plant_regular}, we obtain
\begin{align*}
e_{t+2}
&=
\|x_{t+2}-x_{\mathrm{ss}}(u_{t+2},w)\| \\
&\le
\|x_{t+2}-x_{\mathrm{ss}}(u_t,w)\|
+
\|x_{\mathrm{ss}}(u_t,w)-x_{\mathrm{ss}}(u_{t+2},w)\| \\
&\le
L_{f,x}\|x_{t+1}-x_{\mathrm{ss}}(u_t,w)\|
+
L_{f,u}\|u_{t+1}-u_t\| \\
&\quad
+
L_{x_{\mathrm{ss}},u}\|u_{t+2}-u_t\| \\
&\le
L_{f,x}^2 e_t
+
\left(L_{f,u}+L_{x_{\mathrm{ss}},u}\right)\delta_t\|v_t\|.
\end{align*}
In the last step, we used the construction of \Cref{alg:two_point_ds_fo}, which
implies
$\|u_{t+1}-u_t\|=\delta_t\|v_t\|$ and
$\|u_{t+2}-u_t\|\le\delta_t\|v_t\|$.
Let $C_u\defeq L_{f,u}+L_{x_{\mathrm{ss}},u}$ and
$\rho\defeq L_{f,x}^2\in(0,1)$. Taking expectations and using
$\mathbb{E}[\|v_t\|]\le 1$ yields
\[
\mathbb{E}[e_{t+2}]
\le
\rho\,\mathbb{E}[e_t]
+
C_u\delta_t .
\]
Iterating this recursion over even indices gives, for every
$t\in\Nevennneg$,
\[
\mathbb{E}[e_t]
\le
\rho^{t/2}e_0
+
C_u
\sum_{k\in\evenset{t}}
\rho^{(t-2-k)/2}\delta_k .
\]
Since $\delta_t=(t+1)^{-1/2}$ is nonincreasing and the weights are geometric,
there exist constants $c_e,c_\delta\ge 0$ such that
\[
\mathbb{E}[e_t]
\le
c_e\rho^{t/2}
+
c_\delta\delta_t .
\]

We now translate this state mismatch bound into an oracle-error bound. By
definition of the online and reduced costs,
\[
b_t^{\mathrm{fo}}(u_t)
=
|\tilde{\Phi}_t(u_t)-\Phi_t(u_t)|
=
\left|
\Psi(u_t,y_{t+1})
-
\Psi(u_t,y_{\mathrm{ss}}(u_t,w))
\right|.
\]
Using Assumption~\ref{as:Psi_Lipschitz} and then Assumption~\ref{as:plant_regular},
\begin{align*}
b_t^{\mathrm{fo}}(u_t)
&\le
L_{\Psi,y}
\|y_{t+1}-y_{\mathrm{ss}}(u_t,w)\| \\
&\le
L_{\Psi,y}L_{h,x}
\|x_{t+1}-x_{\mathrm{ss}}(u_t,w)\| \\
&\le
L_{\Psi,y}L_{h,x}L_{f,x}e_t .
\end{align*}
The same reasoning applies to the intermediate trial query $u_{t+1}$.
Indeed, since
\[
\|u_{t+1}-u_t\|=\delta_t\|v_t\|,
\]
and the steady-state map is Lipschitz continuous with respect to the input,
the additional steady-state displacement is also of order $\delta_t$.
Taking expectations and using the previous bound on $\mathbb{E}[e_t]$, we
obtain
\[
\mathbb{E}\!\left[b_t^{\mathrm{fo}}(u_t)\right]
+
\mathbb{E}\!\left[b_{t+1}^{\mathrm{fo}}(u_{t+1})\right]
\le
\bar c_0 \rho^{t/2}
+
\bar c_b \delta_t,
\qquad
t\in\mathbb{N}_{\ge 0}^{\mathrm{even}}.
\]

for suitable constants $\bar c_0,\bar c_b\ge 0$. This proves
\eqref{eq:bt_decay_lemma}. Finally, summing over $t=0,\ldots,T-1$ gives
\[
B_T^{\mathrm{fo}}
=
\sum_{t=0}^{T-1}
\mathbb{E}\!\left[b_t^{\mathrm{fo}}(u_t)\right]
=
O(\sqrt{T}),
\]
because $\sum_{t=0}^{T-1}\rho^{t/2}=O(1)$ and
$\sum_{t=0}^{T-1}\delta_t=O(\sqrt{T})$.
\end{proof}

\Cref{lem:stepsize_dependent_bt} shows that, under constant disturbances, the
feedback-induced oracle error decreases proportionally to the probing ratio
$\delta_t$, up to an exponentially decaying transient term. In particular, the
use of diminishing probing ratios causes the transient mismatch between the
plant trajectory and the steady-state operating point to vanish asymptotically,
thereby making the online oracle progressively more accurate over time.
Moreover, by~\eqref{eq:sum_bt_decay_lemma}, the cumulative oracle error grows 
at most on the order of $\sqrt{T}$. \Cref{fig:mu_plot} illustrates the 
statement of \Cref{lem:stepsize_dependent_bt}.

\begin{figure}[t!]
    \centering
    \includegraphics[width=  0.6\columnwidth]{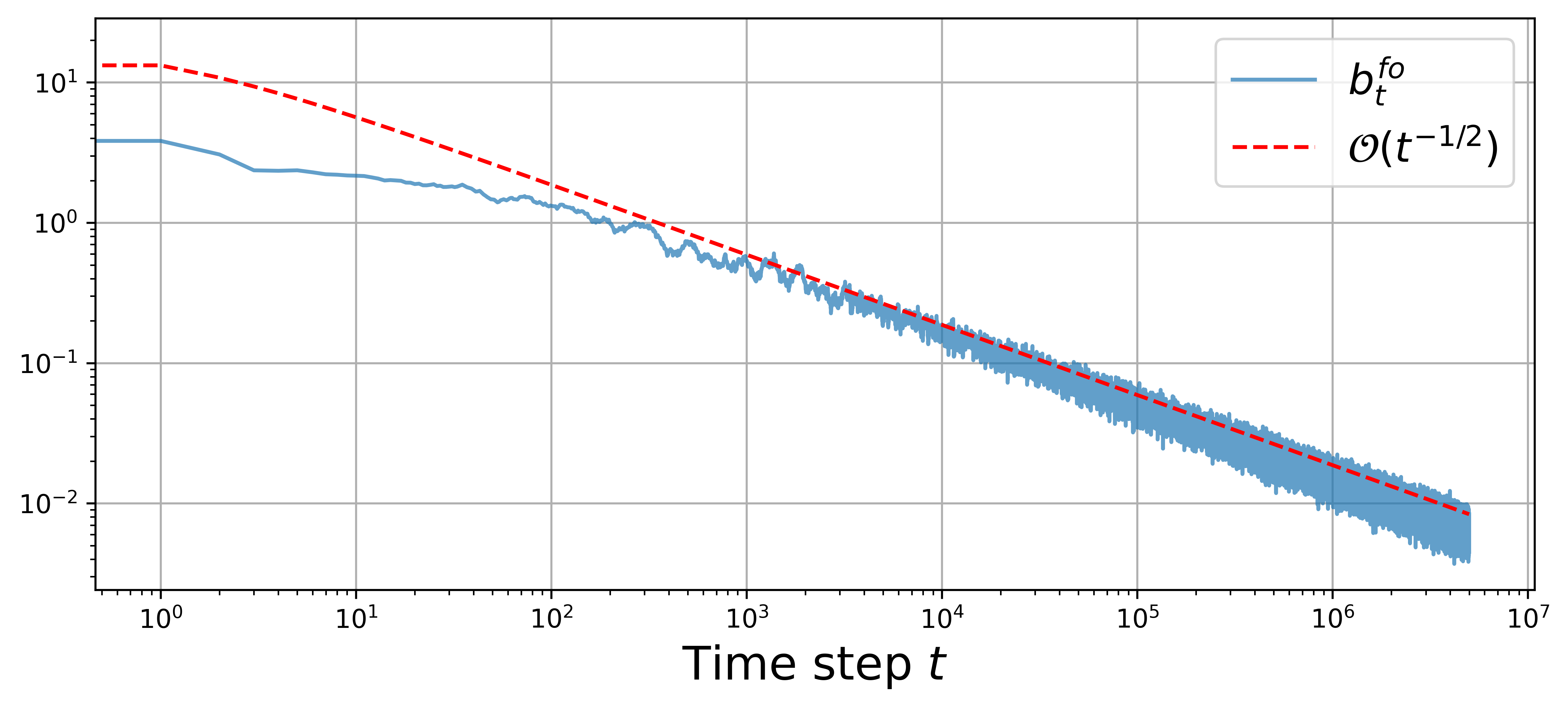}
    \caption{Comparison between the empirical oracle error $b_t^{\mathrm{fo}}(u_t)$ and the theoretical decay rate $O(t^{-1/2})$ predicted by 
    \Cref{lem:stepsize_dependent_bt} in the regime: (i) diminishing probing ratios $\delta_t = 1/\sqrt{t+1}$, and (ii) constant disturbances (cf. Assumption~\ref{as:constant_disturbance}). The numerics illustrate the two regimes captured by \eqref{eq:bt_decay_lemma}: a transient phase followed by an asymptotic regime in which the oracle error scales proportionally to the probing ratio. See \Cref{sec:simulation_results} for details on the numerical values used.}
    \label{fig:mu_plot}
\end{figure}

The conclusion in \Cref{lem:stepsize_dependent_bt} is remarkable, as it 
guarantees that the cumulative oracle error scales as $O(\sqrt{T})$; therefore, 
the assumptions of \Cref{cor:two_point_fo_diminishing} are met, and we can
prove that the algorithm reaches a neighborhood of stationarity.

\begin{theorem}[Iteration complexity of \Cref{alg:two_point_ds_fo} under constant disturbances]
\label{thm:two_point_fo_diminishing_constant_d}
Suppose that
Assumptions~\ref{as:plant_regular}, \ref{as:Psi_Lipschitz}, and~\ref{as:constant_disturbance} hold.
Consider the sequence of diminishing probing ratios
\[
\delta_t = 1/\sqrt{t+1},
\qquad
t \in \Nevennneg.
\]
Let $\bar c_0,\bar c_b,\rho$ be as in \Cref{lem:stepsize_dependent_bt},
and fix $\varepsilon$ such that
\begin{equation}
\label{eq:b-gain-condition}
\varepsilon \geq 
16 \bar c_b (\sqrt{2}+1)\sqrt{2\pi p}.
\end{equation}
Then, for any $T \in \Neventwo$ satisfying
\begin{align}
T
\ge
&
\max\!\Bigg\{
\frac{2\pi p(\sqrt{2}+1)^2}{\varepsilon^2}
\bigl(
4(\Phi(u_0)-\Phi_{\mathrm{low}})
+
L_{\nabla \Phi}
\bigr)^2,
\notag\\
&
\frac{
8\pi p(\sqrt{2}+1)^2L_{\nabla \Phi}^2
}{\varepsilon^2}
\log^2\!\Bigl(
\frac{
2(\sqrt{2}+1)\sqrt{2\pi p}\,L_{\nabla \Phi}
}{\varepsilon}
\Bigr)
\Bigg\},
\label{eq:T-condition-fo-constant-d}
\\
T
\ge
&
\left(
\frac{
8(\sqrt{2}+1)\sqrt{2\pi p}\,\bar c_0
}{
(1-\sqrt{\rho})\varepsilon
}
\right)^2,
\label{eq:T-condition-transient-b}
\end{align}
the iterates generated by \Cref{alg:two_point_ds_fo} satisfy
\[
\min_{t \in \evenset{T}}
\mathbb{E}\!\left[
\|\nabla \Phi(u_t)\|
\right]
\le
\varepsilon.
\]
\end{theorem}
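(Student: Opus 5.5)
Under Assumption~\ref{as:constant_disturbance} the reduced cost is time-invariant, $\Phi_t\equiv\Phi$, so $d_t\equiv 0$ and $D_T=0$. The plan is therefore to invoke the trajectory-dependent version of \Cref{thm:two_point_tv_inexact} (see the Remark preceding \Cref{cor:two_point_fo}), with $B_T$ replaced by $B_T^{\mathrm{fo}}$. We then bound $B_T^{\mathrm{fo}}$ explicitly via \Cref{lem:stepsize_dependent_bt} rather than through Assumption~\ref{as:mu_t_bound}. Concretely, the proof of \Cref{thm:two_point_tv_inexact} only uses $\sum_{t\in\evenset{T}}2(b_t+b_{t+1})$, and by \eqref{eq:bt_decay_lemma} this sum is at most
\[
2\bar c_0\sum_{t\in\evenset{T}}\rho^{t/2}+2\bar c_b\sum_{t\in\evenset{T}}\delta_t
\le \frac{2\bar c_0}{1-\sqrt{\rho}}+2\bar c_b\sum_{t\in\evenset{T}}\delta_t .
\]
Here I bound $\sum_{k\ge0}\rho^{k}$, which is the sum over even $t$ of $\rho^{t/2}$, by $1/(1-\sqrt\rho)$; this is valid since $\rho\le\sqrt\rho$, and it is the form appearing in \eqref{eq:T-condition-transient-b}.

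The key point is that the $\bar c_b$ contribution is proportional to $\sum\delta_t$, which is exactly the normalizer in \eqref{eq:two_point_main_cor}. After dividing, it therefore contributes a constant $2\sqrt{2\pi p}\,\bar c_b$, rather than something growing like $\sqrt T$. This is why a lower bound on $\varepsilon$, namely \eqref{eq:b-gain-condition}, replaces the smallness condition \eqref{eq:E_condition_diminishing}. So the corollary bound becomes
\[
\min_{t\in\evenset{T}}\mathbb{E}\|\nabla\Phi(u_t)\|\le \frac{\sqrt{2\pi p}}{\sum\delta_t}\Big[\Phi(u_0)-\Phi_{\mathrm{low}}+\tfrac{L_{\nabla\Phi}}{4}\sum\delta_t^2+\tfrac{2\bar c_0}{1-\sqrt\rho}\Big]+2\sqrt{2\pi p}\,\bar c_b .
\]

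Next, use \Cref{lem:diminishing_stepsize_sums}: $\sum\delta_t\ge\sqrt T/(\sqrt2+1)$ and $\sum\delta_t^2\le 1+\tfrac12\log T$. The first two bracketed terms are then handled exactly as in the proof of \Cref{thm:two_point_diminishing}: condition \eqref{eq:T-condition-fo-constant-d} coincides with \eqref{eq:T_condition_diminishing}, so these terms total at most $\varepsilon/2$. The transient term is at most $\sqrt{2\pi p}(\sqrt2+1)\,2\bar c_0/((1-\sqrt\rho)\sqrt T)$, and \eqref{eq:T-condition-transient-b} makes this at most $\varepsilon/4$. The $\bar c_b$ term is at most $\varepsilon/8$ by \eqref{eq:b-gain-condition}, and $\sqrt2+1>1$ gives extra slack there. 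Summing, $\varepsilon/2+\varepsilon/4+\varepsilon/8<\varepsilon$.

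The main obstacle is the bookkeeping in the first step. \Cref{lem:stepsize_dependent_bt} pairs $b_t+b_{t+1}$ over even $t$, which matches the per-iteration error $2(b_t+b_{t+1})$ in \eqref{eq:main-step}. I will therefore redo the telescoping of \eqref{eq:main-step} directly, with $d_t=0$, instead of passing through the aggregate $B_T^{\mathrm{fo}}=O(\sqrt T)$. That aggregate would lose the cancellation against $\sum\delta_t$ and wrongly demand a vanishing $\bar c_b$. I also need to check that the expectations are legitimate. The bounds \eqref{eq:selected-vs-min}--\eqref{eq:key-min} hold pathwise with pointwise errors $b^{\mathrm{fo}}_t(u_t)$ and $b^{\mathrm{fo}}_{t+1}(u_{t+1})$, so taking total expectations introduces exactly the quantities controlled by \eqref{eq:bt_decay_lemma}.
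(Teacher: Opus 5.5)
Your proof is correct and follows the same skeleton as the paper's. Both set $D_T=0$, pair $\mathbb{E}[b_t^{\mathrm{fo}}(u_t)]+\mathbb{E}[b_{t+1}^{\mathrm{fo}}(u_{t+1})]$ over even $t$ via \Cref{lem:stepsize_dependent_bt}, and split the final bound into pieces controlled by \eqref{eq:T-condition-fo-constant-d}, \eqref{eq:T-condition-transient-b}, and \eqref{eq:b-gain-condition}.

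The only real difference is bookkeeping. You keep $2\bar c_b\sum_{t\in\evenset{T}}\delta_t$ and cancel it exactly against the normalizer. The paper instead bounds this term by $4\bar c_b\sqrt{T}$, using $\sum_{t=0}^{T-1}\delta_t\le 2\sqrt{T}$, and then checks the $\sqrt{T}$-scaled error budget of \eqref{eq:E_condition_diminishing}, routed through \Cref{cor:two_point_fo_diminishing}.

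Your aside that this aggregate route would lose the cancellation and force a vanishing $\bar c_b$ is therefore wrong. The admissible budget for $2D_T+2B_T^{\mathrm{fo}}$ in \eqref{eq:E_condition_diminishing} is itself of order $\varepsilon\sqrt{T/p}$, so an $O(\sqrt{T})$ aggregate fits inside it under exactly \eqref{eq:b-gain-condition}. That is precisely what the paper verifies. Your exact cancellation only shrinks the $\bar c_b$ contribution to the min-gradient bound from $4(\sqrt2+1)\sqrt{2\pi p}\,\bar c_b$ to $2\sqrt{2\pi p}\,\bar c_b$, a constant factor of $2(\sqrt2+1)$ of slack.

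A small side benefit of working directly with the trajectory-dependent version of \Cref{thm:two_point_tv_inexact} is that you never touch Assumption~\ref{as:mu_t_bound}. \Cref{cor:two_point_fo_diminishing}, which the paper's proof invokes, formally requires that assumption, even though the theorem does not list it.
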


\begin{proof}
Since $w_t \equiv w$, the steady-state objective is time-invariant, i.e.,
$\Phi_t \equiv \Phi$, and therefore $D_T = 0$.

By \Cref{cor:two_point_fo_diminishing}, it suffices to bound the cumulative
oracle error term
\[
2
\sum_{t=0}^{T-1}
\mathbb{E}\!\left[
b_t^{\mathrm{fo}}(u_t)
\right].
\]
By \Cref{lem:stepsize_dependent_bt},
\[
\sum_{t=0}^{T-1} \mathbb E[b_t^{\rm fo}(u_t)]
=
\sum_{t\in[T]_2}
\mathbb E\!\left[
b_t^{\rm fo}(u_t)+b_{t+1}^{\rm fo}(u_{t+1})
\right]
\]
\[
\quad \qquad \le
\sum_{t\in[T]_2}
(\bar c_0\rho^{t/2}+\bar c_b\delta_t).
\]
Using
$\sum_{t=0}^{T-1}\rho^{t/2}
\le
\frac{1}{1-\sqrt{\rho}},$
$\sum_{t=0}^{T-1}\delta_t
\le
2\sqrt{T},$
we obtain
\[
2
\sum_{t=0}^{T-1}
\mathbb{E}\!\left[
b_t^{\mathrm{fo}}(u_t)
\right]
\le
\frac{2\bar c_0}{1-\sqrt{\rho}}
+
4\bar c_b\sqrt{T}.
\]

By \eqref{eq:T-condition-transient-b},
\[
\frac{2\bar c_0}{1-\sqrt{\rho}}
\le
\frac{
\varepsilon\sqrt{T}
}{
4(\sqrt{2}+1)\sqrt{2\pi p}
},
\]
while \eqref{eq:b-gain-condition} implies
\[
4\bar c_b\sqrt{T}
\le
\frac{
\varepsilon\sqrt{T}
}{
4(\sqrt{2}+1)\sqrt{2\pi p}
}.
\]
Combining the two bounds gives
\[
2
\sum_{t=0}^{T-1}
\mathbb{E}\!\left[
b_t^{\mathrm{fo}}(u_t)
\right]
\le
\frac{
\varepsilon\sqrt{T}
}{
2(\sqrt{2}+1)\sqrt{2\pi p}
}.
\]

The remaining initial-gap and smoothness terms are controlled by
\eqref{eq:T-condition-fo-constant-d}, exactly as in
\Cref{cor:two_point_fo_diminishing}. Therefore,
\[
\min_{t \in \evenset{T}}
\mathbb{E}\!\left[
\|\nabla \Phi(u_t)\|
\right]
\le
\varepsilon,
\]
which proves the claim.
\end{proof}


\Cref{thm:two_point_fo_diminishing_constant_d} shows that, under constant
disturbances, \Cref{alg:two_point_ds_fo} achieves the same asymptotic
iteration complexity as the ideal direct-search scheme, despite relying on
transient plant measurements instead of exact steady-state evaluations.

The requirement~\eqref{eq:b-gain-condition} quantifies the intrinsic
accuracy limitation induced by the plant dynamics. Since the residual oracle
error decays proportionally to the probing ratio, the coefficient $\bar c_b$
sets a lower limit on the achievable stationarity tolerance: smaller values of
$\varepsilon$ require correspondingly smaller residual mismatch gain
$\bar c_b$.

Relative to the previous results, the theorem introduces the additional lower
bound~\eqref{eq:T-condition-transient-b} on $T$, which is used to ensure that the
exponentially decaying transient component of the oracle error becomes
negligible relative to the target accuracy~$\varepsilon$.

\section{Heuristic extensions}
\label{sec:extensions}

In this section, we discuss several extensions of the proposed method. Although 
their theoretical analysis is beyond the scope of this paper, these extensions 
are expected to be of significant practical relevance.

\subsection{One-point direct search with residual}

Since  our method requires two oracle evaluations per iteration, it is natural to 
seek a scheme that reuses information from past iterations. Motivated by this 
idea, \Cref{alg:one_point_ds_residual} proposes a one-point direct-search 
method. 

At each iteration, the algorithm evaluates the oracle at a single perturbed 
point $u_t^+=u_t+\delta_t v_t$, where $v_t$ is a Gaussian random direction 
(lines~\ref{alg:one_point_ds_residual:l1} and 
\ref{alg:one_point_ds_residual:l3}). The candidate point is accepted only if 
it improves upon the previously probed value, namely if
$\tilde \Phi_{t+1}(u_t^+) \le \tilde \Phi_t(u_{t-1}^+),$
as checked in line~\ref{alg:one_point_ds_residual:l2}. Otherwise, the current 
iterate is retained. The method therefore requires only one oracle query per 
iteration while leveraging past evaluations for comparisons.

Unlike the two-point methods developed in \Cref{sec:tv_algorithms}, the analysis 
of \Cref{alg:one_point_ds_residual} must explicitly account for temporal drifts 
in the oracle between consecutive iterations. We conjecture that these drifts 
introduce an additional nonvanishing term in bounds of the form 
\eqref{eq:two_point_main_bound}, potentially leading to reduced asymptotic 
resolution relative to the two-point method. 
We leave such an analysis as the scope of future works.

\begin{algorithm}[t]
\caption{Online one-point residual direct search}
\label{alg:one_point_ds_residual}
\begin{algorithmic}[1]
\Require Initial iterate $u_0 \in \real^p$, probing ratios 
$\{\delta_t\}_{t \ge 0}$, oracle 
$\tilde \Phi_t : \real^p \to \mathbb{R}$

\State $t \gets 0$

\State Draw 
$v_0 \sim \mathcal{N}(0,\frac{1}{p}I_p)$ and define
$u_0^+ := u_0 + \delta_0 v_0$

\While{stopping criterion not met}

    \State \label{alg:one_point_ds_residual:l1}
    Evaluate 
    $\tilde \Phi_{t+1}(u_t^+)$

    \State \label{alg:one_point_ds_residual:l2}
    Update the decision as 
    $    u_{t+1} =
        \begin{cases}
            u_t^+, 
            & \text{if } 
            \tilde \Phi_{t+1}(u_t^+) 
            \le 
            \tilde \Phi_t(u_{t-1}^+), \\
            u_t, 
            & \text{otherwise.}
        \end{cases}$

    \State \label{alg:one_point_ds_residual:l3}
    Draw 
    $v_{t+1} \sim \mathcal{N}(0,\frac{1}{p}I_p)$ and define
    $u_{t+1}^+ := u_{t+1} + \delta_{t+1} v_{t+1}$

    \State \label{alg:one_point_ds_residual:l4}
    $t \gets t+1$

\EndWhile
\end{algorithmic}
\end{algorithm}

\subsection{Three-point method}

Inspired from the framework of~\cite{EB-EG-PR:20}, 
\Cref{alg:three_point_ds_tv} proposes a stochastic three-point method for online 
optimization. 

At each iteration, the algorithm first evaluates the oracle at the current  
iterate (line~\ref{alg:three_point_ds_tv:l1}). It then generates two opposite perturbations along the same Gaussian direction
(lines~\ref{alg:three_point_ds_tv:l2} and 
\ref{alg:three_point_ds_tv:l4}), and evaluates the oracle at the corresponding 
candidate points (lines~\ref{alg:three_point_ds_tv:l3} and 
\ref{alg:three_point_ds_tv:l5}).
The update step in line~\ref{alg:three_point_ds_tv:l6} selects the point among 
$\{u_t,u_{t+1},u_{t+2}\}$ with the smallest oracle value. If neither candidate 
improves upon the current iterate, the method retains $u_t$. The algorithm then 
advances by three steps (line~\ref{alg:three_point_ds_tv:l7}).

Three-point methods harness symmetry from the two exploration directions to 
improve the descent~\cite{EB-EG-PR:20}. We conjecture that this 
additional exploration yields at most a constant-factor improvement in iteration 
complexity relative to the two-point method, but leave a formal analysis for 
future~work.

\begin{algorithm}[t]
\caption{Online three-point direct search}
\label{alg:three_point_ds_tv}
\begin{algorithmic}[1]
\Require Initial iterate $u_0 \in \real^p$, probing ratios 
$\{\delta_t\}_{t \ge 0}$, oracle 
$\tilde \Phi_t : \real^p \to \mathbb{R}$

\State $t \gets 0$

\While{stopping criterion not met}

    \State \label{alg:three_point_ds_tv:l1}
    Evaluate $\tilde \Phi_t(u_t)$

    \State \label{alg:three_point_ds_tv:l2}
    Draw 
    $v_t \sim \mathcal{N}(0,\frac{1}{p}I_p)$ and define
    $u_{t+1} := u_t + \delta_t v_t$

    \State \label{alg:three_point_ds_tv:l3}
    Evaluate 
    $\tilde \Phi_{t+1}(u_{t+1})$

    \State \label{alg:three_point_ds_tv:l4}
    Define $u_{t+2} := u_t - \delta_{t} v_{t}$

    \State \label{alg:three_point_ds_tv:l5}
    Evaluate 
    $\tilde \Phi_{t+2}(u_{t+2})$

    \State \label{alg:three_point_ds_tv:l6}
    Update :\hspace{-1.5cm}
    \scalebox{.9}{\parbox{\linewidth}{
    \begin{align*}
        u_{t+3} =
        \begin{cases}
        u_{t+1}, 
        & \text{if }
        \tilde \Phi_{t+1}(u_{t+1})
        \le
        \min\{
        \tilde \Phi_t(u_t),
        \tilde \Phi_{t+2}(u_{t+2})
        \},
        \\
        u_{t+2}, 
        & \text{if }
        \tilde \Phi_{t+2}(u_{t+2})
        \le
        \min\{
        \tilde \Phi_t(u_t),
        \tilde \Phi_{t+1}(u_{t+1})
        \},
        \\
        u_t,
        & \text{otherwise.}
        \end{cases}
    \end{align*}
    }}

    \State \label{alg:three_point_ds_tv:l7}
    $t \gets t+3$

\EndWhile
\end{algorithmic}
\end{algorithm}

\section{Simulation results}
\label{sec:simulation_results}

We test our methods on an instance of the optimal equilibrium-selection 
problem \eqref{eq:output_regulation}, with:
\begin{align}\label{eq:loss_sims_1}
\Psi(u,y)
=
u^\top R_1 u
+
R_2^\top u
+
\gamma\|y\|^2,
\end{align}
where $R_1$ is symmetric positive definite and $\gamma>0$. The objective 
\eqref{eq:loss_sims_1} balances two competing goals: minimizing the
linear-quadratic control term (i.e., $u^\top R_1 u + R_2^\top u$), whose 
optimizer is $-\frac{1}{2}R_1^{-1}R_2$, and regulating the system output to 
the origin (through $\| y\|^2$), robustly against any arbitrary $w_t$. 
The parameter $\gamma$ controls the tradeoff between the two 
objectives. We consider the following instance of \eqref{eq:plant}:
\begin{align}
x_{t+1} &= A x_t + B u_t + E w_t, & 
y_t &= C x_t + D w_t,
\label{system}
\end{align}
The steady-state map \eqref{eq:ssMap} of this plant is given by 
$y_{\mathrm{ss}}(u,w)
=
Gu+Hw,$
where
$G=C(I-A)^{-1}B,
H=C(I-A)^{-1}E+D,$
and terminal cost
$\Psi(u,y)
=
u^\top R_1 u
+
R_2^\top u
+
\gamma\|y\|^2.$

Although we present our simulations in a unified framework following the feedback 
optimization formulation of \Cref{sec:feedbackOptimization_algorithms}, we 
evaluate the proposed methods under both exact and inexact oracle models. 
We consider an exact oracle \eqref{eq:oracle_feedback_optimization_exact} given by
\begin{align}\label{eq:oracle_feedback_optimization_exact_sims}
\Phi_t(u)
&=
\Psi \big(u, y_{\mathrm{ss}}(u,w_t)\big) 
=
u^\top R_1 u
+
R_2^\top u
+
\gamma\|Gu+Hw_t\|^2.
\end{align}
while the inexact oracle~\eqref{eq:oracle_feedback_optimization_v2} is given by:
\begin{align}\label{eq:oracle_feedback_optimization_v2_sims}
\tilde{\Phi}_t(u)
&=
\Psi\big(u, h(f(x_{t}, u, w_{t}), w_t)\big) \\
&=
u^\top R_1 u
+
R_2^\top u \notag +
\gamma
\left\|
CAx_{t}
+
CBu
+
CEw_{t}
+
Dw_t
\right\|^2.
\end{align}

For our numerics, we use $p=q=r=5$, $n=10$. We parameterize $R_1$ as
$R_1 = R_3^\top R_3,$ and draw the entries of $R_2$ and $R_3$ independently from 
the standard uniform distribution. We set $\gamma=1$. The matrices $A,B,C,D,E$ 
are also generated randomly from the uniform distribution, with $\|A\|=0.05$. 
We let $w_t$ be a realization of 
$w_t \sim \mathcal{N}(w^\star,\frac{\sigma^2}{1+t} I_r),$ $w^\star \in \real^r$  $\sigma^2 >0,$ modeling a disturbance sequence with decaying variance, corresponding to a progressively more accurate oracle. 
For the numerical experiments, the disturbance sequence is generated once at the beginning and then kept fixed throughout all runs; consequently, conditionally on this realization, $\{\Phi_t\}_{t \geq 0}$
can be regarded as a deterministic sequence of objective functions.
Unless otherwise specified, we use $\sigma=1$.
Notice that, with this model, 
$\mathbb{E}\left\|w_{t+1}-w_t\right\|=O\left(\frac{\sigma}{\sqrt{t+1}}\right)$, corresponding 
to $\sum_{t=0}^{T-1} \mathbb{E}\left\|w_{t+1}-w_t\right\|=O(\sigma \sqrt{T}).$
Unless specified otherwise, we use the choice of probing 
$\delta_t = \frac{1}{\sqrt{t+1}}.$


\subsection{Validation of the analytical bounds}
\label{sec:validation_bounds}

Simulation results are reported in \Cref{fig:bound_and_noise} (left). 
The blue curve shows 
the evolution of $\|\nabla \Phi_t(u_t)\|$ for an algorithm with exact oracle 
(i.e., \Cref{alg:two_point_ds_tv} with oracle 
\eqref{eq:oracle_feedback_optimization_exact_sims}), while the orange curve 
corresponds to an algorithm with inexact oracle (i.e., \Cref{alg:two_point_ds_fo} with oracle
\eqref{eq:oracle_feedback_optimization_v2_sims}). 
The figure shows that the gradient error in both cases decays linearly with 
slope approximately $-1/2$ in log--log scale, consistently with the 
$\mathcal{O}(\log(T)/\sqrt{T})$ resolution predicted by 
\Cref{thm:two_point_diminishing} (cf. \eqref{eq:resolution_diminishing}).

The green curve illustrates a trajectory-based empirical counterpart
of the upper bound of 
\Cref{thm:two_point_tv_inexact}. Precisely, rearranging \eqref{eq:main-step} 
yields the pointwise estimate
\begin{align}\label{eq:pointwise_bound}
\mathbb{E}\!\left[\|\nabla \Phi_t(u_t)\|\right]
&\le
\sqrt{2\pi p}
\bigg(
\frac{
\mathbb{E}[\Phi_t(u_t)-\Phi_{t+2}(u_{t+2})]
}{\delta_t}
+
\frac{L_{\nabla \Phi}}{4}\delta_t
\notag \\
&\qquad+
\frac{
2d_t+d_{t+1}+2b_t +2b_{t+1}
}{\delta_t} \bigg).
\end{align}
The green curve of \Cref{fig:bound_and_noise} (left) illustrates the right-hand side of \eqref{eq:pointwise_bound}, where the quantities $d_t$ and $b_t$ are estimated 
empirically along the iterates as
\[
\hat d_t
\defeq
|\Phi_{t+1}(u_t)-\Phi_t(u_t)|,
\qquad
b_t^{\mathrm{fo}}(u_t)
\defeq
|\tilde{\Phi}_t(u_t)-\Phi_t(u_t)|.
\]
%
The simulations show that this trajectory-based surrogate accurately captures 
the behavior of the method, with the observed performance remaining 
consistently below it. We stress that, since $\hat d_t$ evaluates the temporal 
drift along the trajectory rather than as a supremum over $u$, the green curve 
is an empirical surrogate for the right-hand side of 
\eqref{eq:pointwise_bound}, and not a certified upper bound. 

Asymptotically, the green curve in \Cref{fig:bound_and_noise} (left) reaches a plateau, 
which can be attributed to the drift-related contribution 
$\frac{
2d_t+d_{t+1}+2b_t +2b_{t+1}
}{\delta_t}$ 
in \eqref{eq:pointwise_bound}. This behavior can be interpreted in light of
\Cref{cor:two_point_fo_diminishing}: since 
$\sum_{t=0}^{T-1}\mathbb{E}\!\left[\|w_{t+1}-w_t\|\right]
=
O(\sigma \sqrt{T}),$
the term
$\frac{1}{\sqrt{T}}
\left(
\sum_{t=0}^{T-1}\|w_{t+1}-w_t\|
+
\sum_{t=0}^{T-1}\mathbb{E}[\mu_t]
\right)$ does not converge to zero; consequently, 
the method converges only to a neighborhood of a critical point, as predicted 
by \Cref{cor:two_point_fo_diminishing}.


\begin{figure}[t!]
    \centering

    \begin{minipage}[t]{0.53\textwidth}
        \centering
        \includegraphics[width=\textwidth]{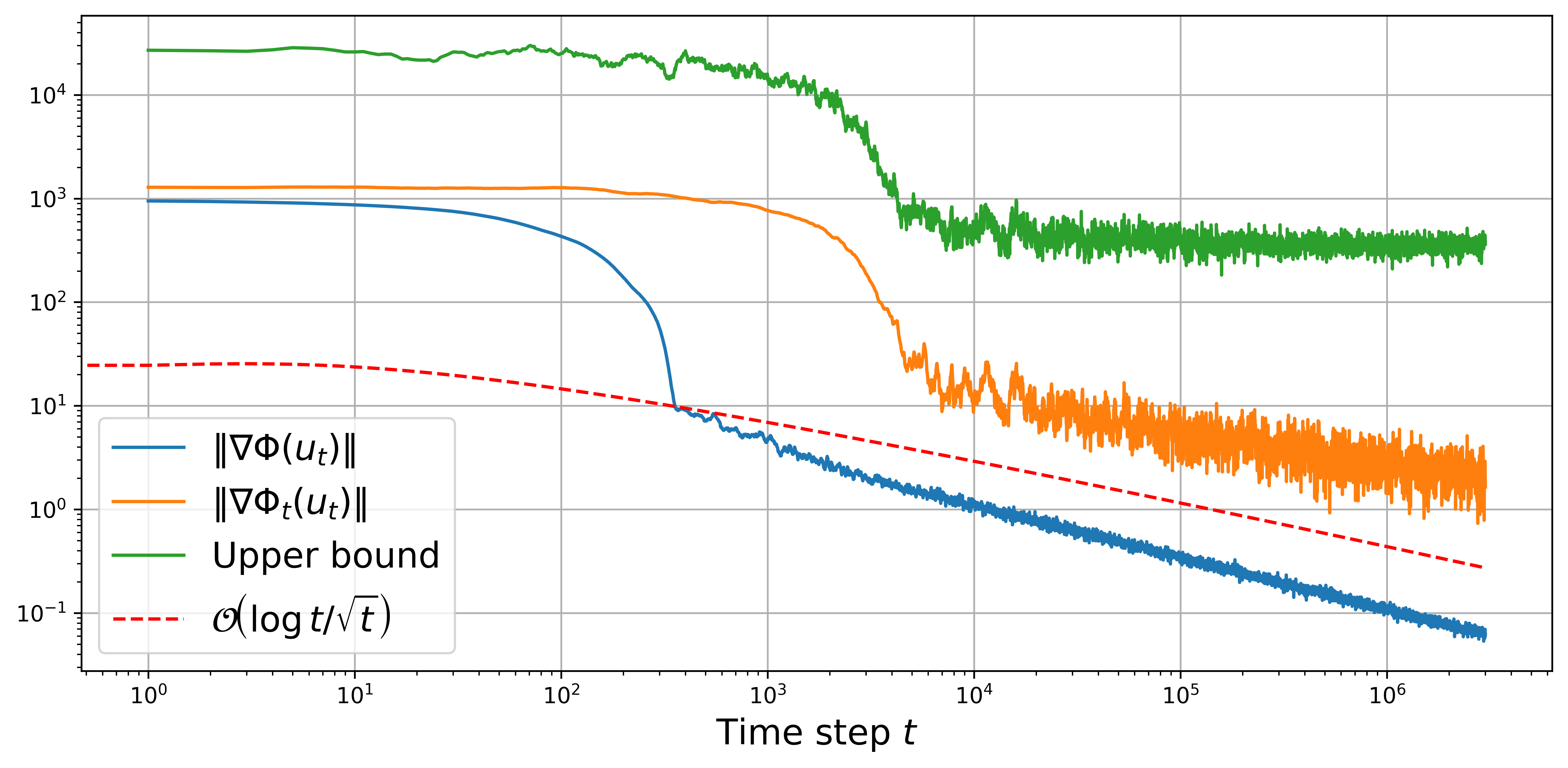}
    \end{minipage}
    \hfill
    \begin{minipage}[t]{0.46\textwidth}
        \centering
        \includegraphics[width=\textwidth]{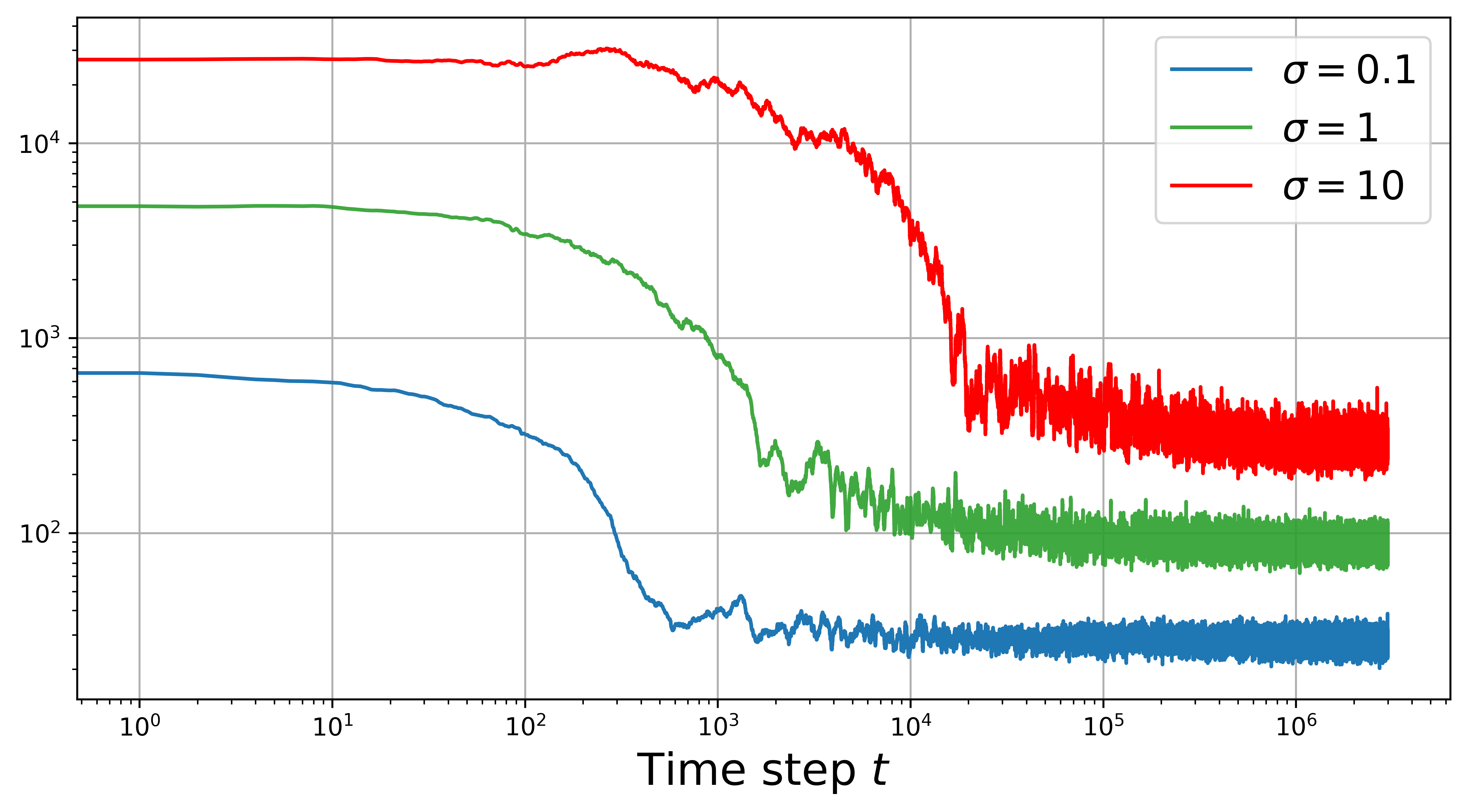}
    \end{minipage}

    \caption{Left: comparison of
    the exact- and inexact-oracle implementations with the trajectory-based
    empirical surrogate \eqref{eq:pointwise_bound} (labelled ``upper bound'' in
    the legend). Right: evolution of the surrogate
    \eqref{eq:pointwise_bound} for
    $\sigma\in\{0.1,\,1,\,10\}$. The plateau level increases with $\sigma$,
    consistently with
    \eqref{eq:low_bound_eps_two_point_fo_diminishing}, according to which the
    achievable resolution scales proportionally to the noise level. All
    quantities are averaged over 10 realizations of $\{v_t\}$.}
    \label{fig:bound_and_noise}
\end{figure}

\subsection{Dependence on noise magnitude}
The asymptotic accuracy of the proposed method is illustrated in
\Cref{fig:bound_and_noise} (right), which depicts the evolution of the right-hand side of
\eqref{eq:pointwise_bound} for $\sigma\in\{0.1,\,1,\,10\}$.
All three curves exhibit the same qualitative behavior: an initial transient 
decay followed by a plateau whose level increases with $\sigma$. This behavior 
is consistent with \eqref{eq:low_bound_eps_two_point_fo_diminishing}: 
under  the considered disturbance model,
$\sum_{t=0}^{T-1}\mathbb{E}\!\left[\|w_{t+1}-w_t\|\right]
=
O(\sigma \sqrt{T}),$
so \eqref{eq:low_bound_eps_two_point_fo_diminishing} implies 
$\varepsilon \ge \kappa \sigma$ for some $\kappa>0$, showing that the 
achievable resolution scales proportionally to the noise level $\sigma$.

\subsection{Comparison between one-, two-, and three-point methods}


\begin{figure}[t!]
    \centering

    \begin{minipage}[t]{0.49\textwidth}
        \centering
        \includegraphics[width=\textwidth]{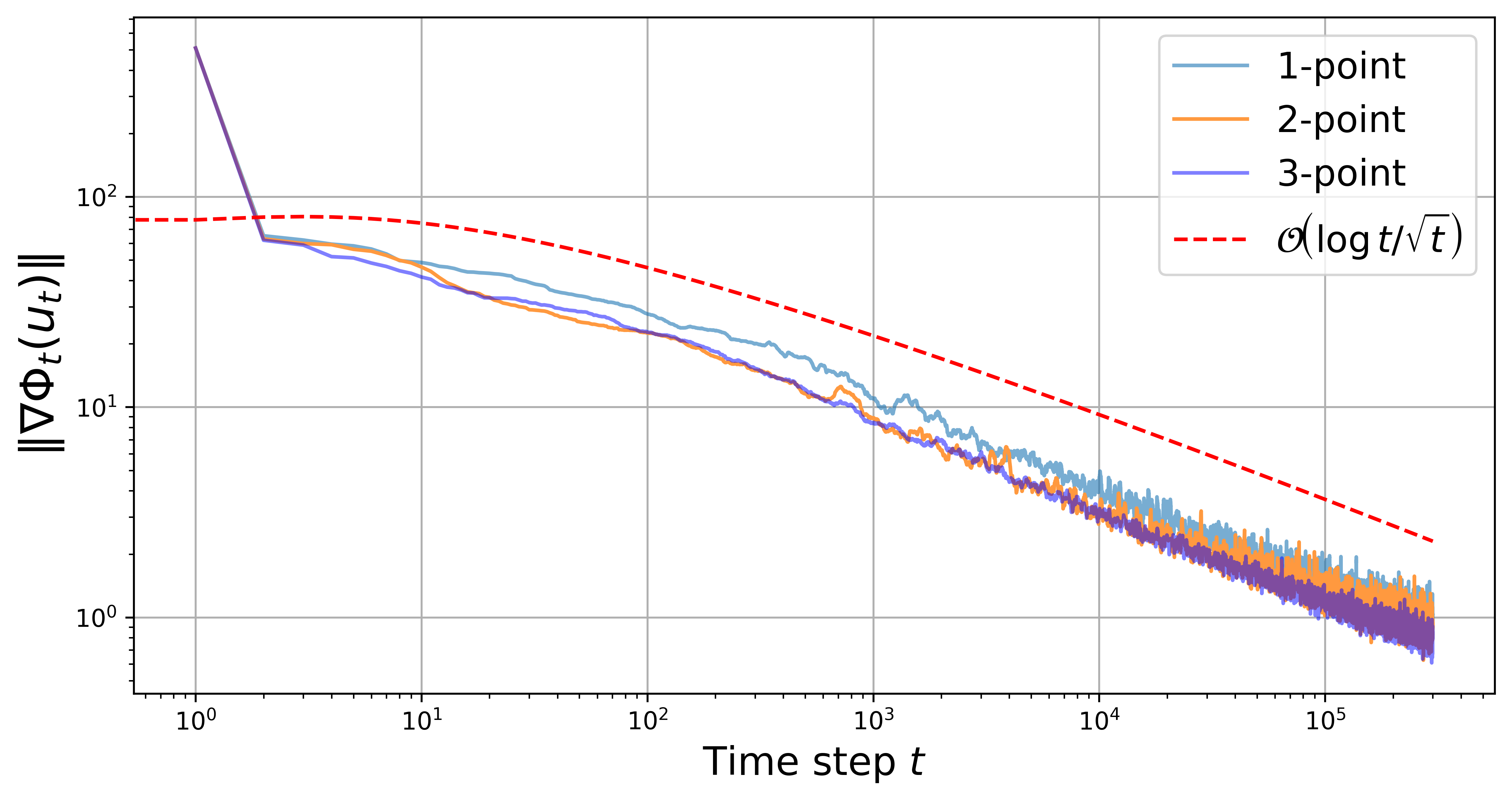}
    \end{minipage}
    \hfill
    \begin{minipage}[t]{0.49\textwidth}
        \centering
        \includegraphics[width=\textwidth]{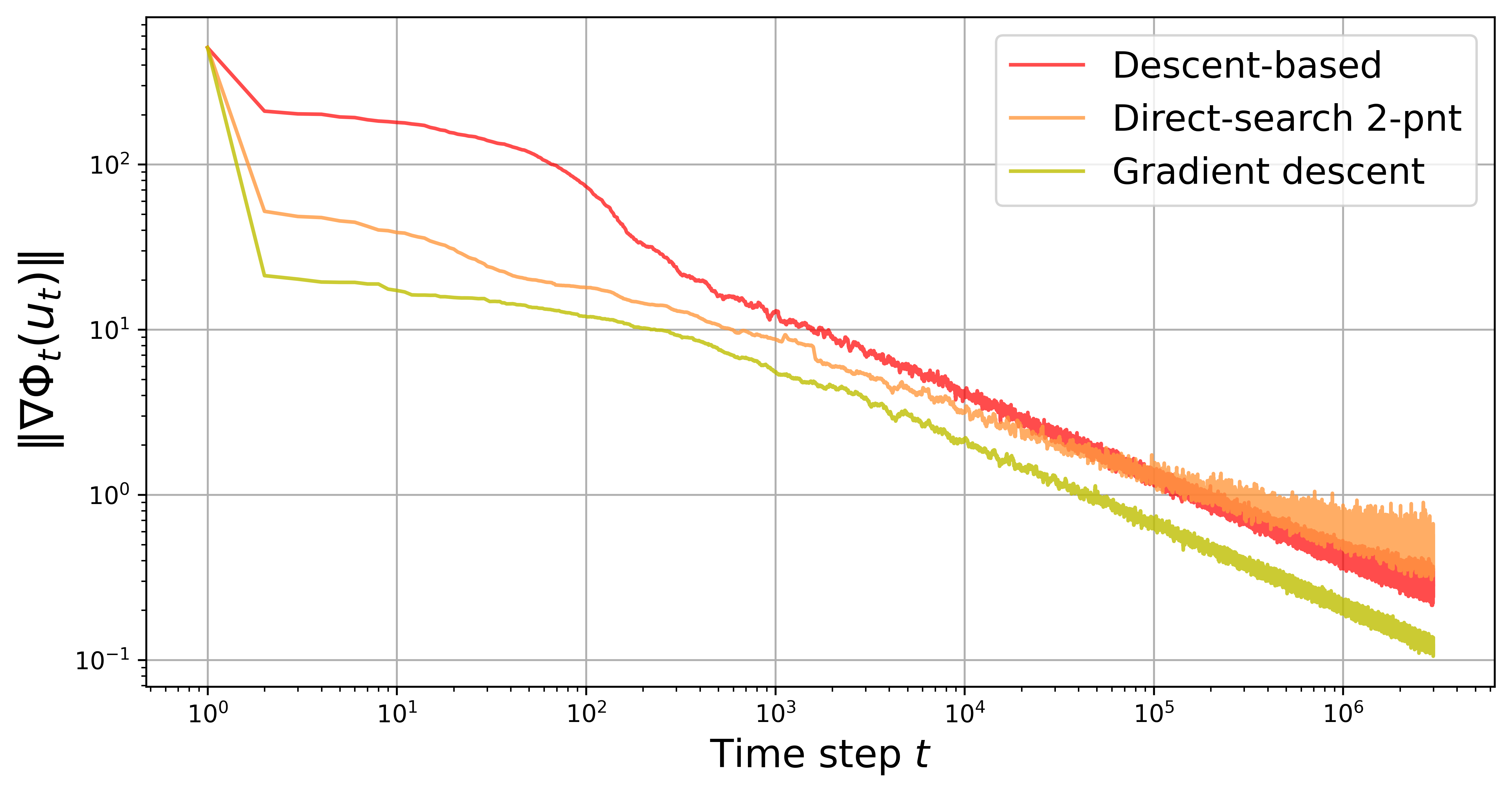}
    \end{minipage}

    \caption{Left:
    comparison of the one-point (\Cref{alg:one_point_ds_residual}), two-point
    (\Cref{alg:two_point_ds_tv}), and three-point
    (\Cref{alg:three_point_ds_tv}) direct-search methods under the inexact
    oracle~\eqref{eq:oracle_feedback_optimization_v2_sims}. All methods exhibit
    similar asymptotic behavior, with decay rates consistent with the predicted
    $\mathcal{O}(\log(T)/\sqrt{T})$ scaling. Right: comparison of the
    two-point direct-search method with online gradient
    descent~\cite{EH:16} and the two-point gradient-estimation
    method~\cite{YN-VS:17}, using the same inexact oracle. 
    }
    \label{fig:comparison_methods}
\end{figure}

\Cref{fig:comparison_methods} (left) compares the one-point 
(cf.~\Cref{alg:one_point_ds_residual}), two-point 
(cf.~\Cref{alg:two_point_ds_tv}), and three-point 
(cf.~\Cref{alg:three_point_ds_tv}) methods.
For a sharper comparison, in this simulation, all methods are 
implemented using the inexact oracle 
\eqref{eq:oracle_feedback_optimization_v2_sims}.
All three methods exhibit qualitatively similar behavior: after a comparable 
transient phase, the curves decay in agreement with the 
$\mathcal{O}(\log(T)/\sqrt{T})$ rate predicted by 
\Cref{thm:two_point_diminishing} (cf.~Remark~\ref{rem:resolution_diminishing}).

\blue{The three-point method achieves the best 
asymptotic accuracy among the three methods for this setting (a particular cost function, norm of $A$, noise level), suggesting that the additional 
exploratory perturbation improves the quality of the descent direction in 
the proximity of critical points.}

\subsection{Comparison with the state-of-the-art}
\Cref{fig:comparison_methods} (right) compares \Cref{alg:two_point_ds_tv} with online 
gradient descent~\cite{EH:16} (orange), implemented with the optimal stepsize 
$\eta=1/L$, and the two-point gradient-estimation method of~\cite{YN-VS:17} 
(blue). For a sharper comparison, all methods are implemented using the inexact 
oracle~\eqref{eq:oracle_feedback_optimization_v2_sims}. 
We stress that online gradient descent uses additional (first-order) 
knowledge relative to \Cref{alg:two_point_ds_tv}, which instead relies 
exclusively on zeroth-order information. 
On the other hand, the descent-based method of~\cite{YN-VS:17} 
uses zeroth-order information, but seeks to replicate the behavior of online 
gradient descent by producing gradient estimates from zeroth-order information. Furthermore, in contrast with the proposed method, this descent-based method can be unstable and requires stepsizes to be tuned in practice. 

The simulation shows that the proposed direct-search method achieves transient 
performance comparable to that of first-order methods. In contrast, the 
randomized schemes exhibit a higher asymptotic variance, attributable to the 
persistent random exploration noise used to identify descent directions. 
Overall, the results demonstrate that the proposed method attains performance 
comparable both to existing zeroth-order approaches and to first-order methods 
with access to exact gradients, highlighting the potential of the approach.


\subsection{Dependence on the problem dimension}
\label{sec:dimension_dependence}

\Cref{fig:dim_p} reports the number of iterations required to reach an
$\varepsilon$-stationary point as the decision dimension $p$ grows, for several
stationarity tolerances. The observed growth is approximately linear in $p$, in
agreement with the scaling
$T=\mathcal{O}(p\varepsilon^{-2}\log^2(L_{\nabla \Phi}\sqrt{p}/\varepsilon))$
predicted by \Cref{thm:two_point_diminishing}.

\begin{figure}[t!]
    \centering
    \includegraphics[width=0.75\columnwidth]{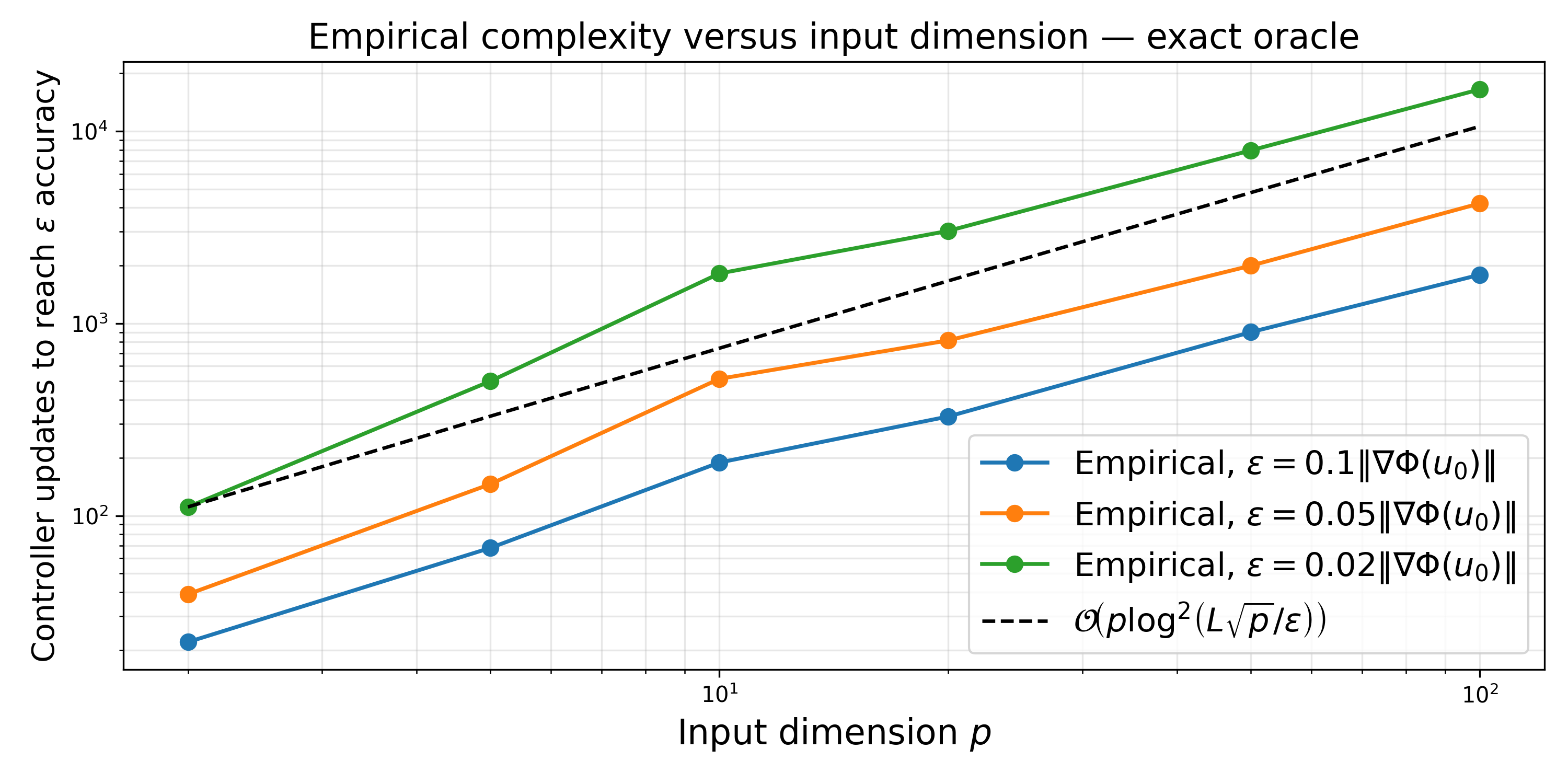}
    \caption{
    Iteration complexity of \Cref{alg:two_point_ds_tv}
    as a function of the input dimension \(p\) for different stationarity
    tolerances \(\varepsilon\).
    The figure illustrates the scaling predicted by
    \Cref{thm:two_point_diminishing}, namely
    \(\displaystyle
    T =
    \mathcal{O}\!\left(
    \frac{p}{\varepsilon^2}
    \log^2\!\left(
    \frac{L_{\nabla\Phi}\sqrt{p}}{\varepsilon}
    \right)
    \right)
    \),
    showing an approximately linear growth of the number of plant iterations
    with respect to \(p\), up to logarithmic factors.
    Smaller values of \(\varepsilon\) require more iterations to reach an
    \(\varepsilon\)-stationary point.
    See \Cref{sec:simulation_results} for the numerical setup.
    }
    \label{fig:dim_p}
\end{figure}

\section{Conclusions}
\label{sec:conclusions}

This paper developed a direct-search framework for nonconvex time-varying optimization under inexact zeroth-order information. Unlike existing direct-search methods, which are largely confined to static settings, the proposed approach explicitly accounts for temporal variability and oracle inaccuracies. We established non-asymptotic stationarity guarantees and iteration-complexity bounds under both constant and diminishing probing ratios, recovering classical zeroth-order rates up to logarithmic factors. We also showed how the framework applies to feedback optimization and online equilibrium selection in dynamical systems, yielding guarantees that explicitly quantify the effects of temporal drift and plant-induced oracle errors.
Promising directions of future work include analyzing one-point or three-point direct-search 
methods, constrained and distributed extensions, accelerated schemes, and the 
derivation of lower complexity bounds for time-varying zeroth-order optimization.

\section*{Acknowledgments}
The authors thank Geovani N.~Grapiglia for insightful discussions and valuable
input during the development of this work.


\appendix

\section{Auxiliary results}

\subsection{Sums of diminishing probing ratios}
\label{app:stepsize_sums}

\begin{lemma}[Diminishing probing ratio sums]
\label{lem:diminishing_stepsize_sums}
Let $T \ge 2$ be even and consider the probing ratios
\[
\delta_t = \frac{1}{\sqrt{t+1}},
\qquad t = 0,2,\dots,T-2.
\]
Then, the following bounds hold:
\begin{align}
\sum_{t \in \evenset{T}}\delta_t
&\ge
\frac{\sqrt{T}}{\sqrt{2}+1},
\label{eq:sum_delta_lower}
\\
\sum_{t \in \evenset{T}}\delta_t^2
&\le
1 + \frac{1}{2}\log(T).
\label{eq:sum_delta_sq_upper}
\end{align}
\end{lemma}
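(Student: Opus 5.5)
Both bounds follow from comparing the sums over the even indices $t=2k$, $k=0,\dots,T/2-1$, with integrals. Write $N\defeq T/2\ge 1$, so that
\[
\sum_{t\in\evenset{T}}\delta_t=\sum_{k=0}^{N-1}\frac{1}{\sqrt{2k+1}},
\qquad
\sum_{t\in\evenset{T}}\delta_t^2=\sum_{k=0}^{N-1}\frac{1}{2k+1}.
\]

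\emph{Lower bound \eqref{eq:sum_delta_lower}.} The map $s\mapsto (2s+1)^{-1/2}$ is decreasing, so for each $k$ we have $(2k+1)^{-1/2}\ge\int_k^{k+1}(2s+1)^{-1/2}\,ds$. Summing over $k$ gives
\[
\sum_{k=0}^{N-1}\frac{1}{\sqrt{2k+1}}\ge\sqrt{2N+1}-1=\sqrt{T+1}-1.
\]
Equivalently, one can telescope the inequality $\frac{1}{\sqrt{2k+1}}\ge\sqrt{2k+3}-\sqrt{2k+1}$, which holds because its right-hand side equals $2/(\sqrt{2k+3}+\sqrt{2k+1})$. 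It then remains to check the elementary inequality
\[
\sqrt{T+1}-1\ge\frac{\sqrt T}{\sqrt2+1}=(\sqrt2-1)\sqrt T.
\]
Since $\sqrt{T+1}\ge\sqrt T$, it suffices that $\sqrt T-1\ge(\sqrt2-1)\sqrt T$, i.e., $(2-\sqrt2)\sqrt T\ge1$, i.e., $\sqrt T\ge 1+\tfrac{1}{\sqrt2}\approx1.707$. This holds for $T\ge 4$. The case $T=2$ I will check directly: the sum equals $1$, and $\sqrt2/(\sqrt2+1)=2-\sqrt2<1$. The constant $\sqrt2+1$ is presumably what the authors obtained from the cruder bound $\sqrt{2k+1}\le\sqrt2\sqrt{k+1}$ followed by the integral bound on $\sum_k(k+1)^{-1/2}$. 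Either route works, and I will use whichever makes the constants close most cleanly.

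\emph{Upper bound \eqref{eq:sum_delta_sq_upper}.} I separate the $k=0$ term, which equals $1$. For $k\ge1$, decreasing monotonicity gives $\frac{1}{2k+1}\le\int_{k-1}^{k}\frac{ds}{2s+1}$. Summing over $k=1,\dots,N-1$ yields
\[
\sum_{k=1}^{N-1}\frac{1}{2k+1}\le\int_0^{N-1}\frac{ds}{2s+1}=\tfrac12\log(2N-1)=\tfrac12\log(T-1)\le\tfrac12\log T.
\]
Adding back the first term gives $1+\tfrac12\log T$, as claimed.

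The only delicate point is the small-$T$ verification in the lower bound, where the asymptotic comparison is not yet effective. I will handle it by the explicit check at $T=2$ above, noting that the inequality is already verified for all even $T\ge4$.
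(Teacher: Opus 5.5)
Your proposal is correct and follows essentially the same route as the paper: the paper compares the first sum with $\int_0^{T/2}(2x+1)^{-1/2}\,dx=\sqrt{T+1}-1$, and for the second it splits off the $k=0$ term and compares with $\int_1^{T/2}\frac{dx}{2x}=\frac12\log(T/2)$. That last step tacitly relies on the convexity (midpoint) inequality $\frac{1}{2k+1}\le\int_k^{k+1}\frac{dx}{2x}$, whereas your shifted comparison needs only monotonicity and gives the slightly weaker but sufficient $\frac12\log(T-1)$. Your case split at $T=2$ is valid but unnecessary: writing $\sqrt{T+1}-1=T/(\sqrt{T+1}+1)$ reduces the claim to $(\sqrt2+1)\sqrt T\ge\sqrt{T+1}+1$, which holds termwise ($\sqrt{2T}\ge\sqrt{T+1}$ and $\sqrt T\ge1$) for all $T\ge1$, with equality at $T=1$. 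That equality case, rather than the cruder bound you guessed, explains the constant $\sqrt2+1$.
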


\begin{proof}
Let $t=2k$, with $k=0,1,\dots,\frac{T}{2}-1$. Then
\[
\delta_{2k} = \frac{1}{\sqrt{2k+1}}.
\]

For the first bound,
\begin{align*}
\sum_{t \in \evenset{T}}\delta_t
&=
\sum_{k=0}^{T/2-1} \frac{1}{\sqrt{2k+1}}
\;\ge\;
\int_{0}^{T/2} \frac{1}{\sqrt{2x+1}}\,dx \\
&=
\left[\frac{\sqrt{2x+1}}{1}\right]_{0}^{T/2}
=
\sqrt{T+1}-1
\;\ge\;
\frac{\sqrt{T}}{\sqrt{2}+1},
\end{align*}
where the last inequality holds for all $T \ge 2$.

For the second bound,
\begin{align*}
\sum_{t \in \evenset{T}}\delta_t^2
&=
\sum_{k=0}^{T/2-1} \frac{1}{2k+1}
\;\le\;
1 + \int_{1}^{T/2} \frac{1}{2x}\,dx \\
&=
1 + \frac{1}{2}\log\!\left(\frac{T}{2}\right)
\;\le\;
1 + \frac{1}{2}\log(T).
\end{align*}
\end{proof}

\subsection{Gaussian one-sided moments}
\label{app:gaussian}

\begin{lemma}[Gaussian one-sided moments]
\label{lem:gaussian_one_sided}
\\
Let $v \sim \mathcal{N}(0,\frac{1}{p}I_p)$. Then,
$\mathbb{E}[\|v\|^2]=1.$
Moreover, for any vector \(g\in\mathbb R^p\setminus\{0\}\), the following identities hold:
\begin{align}
    \label{eq:descent_product}
\mathbb{E}\!\left[(-\langle g,v\rangle)_+\right]
&=
\frac{1}{\sqrt{2\pi p}}\|g\|,
\\
\label{eq:descent_norm}
\mathbb{E}\!\left[\|v\|^2 \mathbf{1}_{\{\langle g,v\rangle<0\}}\right]
&=
\frac{1}{2}.
\end{align}
\end{lemma}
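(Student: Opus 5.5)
The plan is to reduce everything to one-dimensional Gaussian computations by exploiting rotational invariance. Write $v = \frac{1}{\sqrt p} z$ with $z\sim\mathcal N(0,I_p)$. The first claim is immediate: $\mathbb E[\|v\|^2]=\frac1p\mathbb E[\|z\|^2]=\frac1p\cdot p=1$.

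For \eqref{eq:descent_product}, I will note that $\langle g,v\rangle$ is a scalar Gaussian with mean zero and variance $\|g\|^2/p$. Hence $-\langle g,v\rangle \stackrel{d}{=} \frac{\|g\|}{\sqrt p}\,\xi$ with $\xi\sim\mathcal N(0,1)$. It then suffices to compute the half-normal moment
\[
\mathbb E[\xi_+]=\int_0^\infty \frac{x}{\sqrt{2\pi}}e^{-x^2/2}\,dx=\frac{1}{\sqrt{2\pi}},
\]
which yields $\frac{\|g\|}{\sqrt{2\pi p}}$.

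For \eqref{eq:descent_norm}, the key step is symmetry. The map $v\mapsto -v$ preserves the law of $v$, preserves $\|v\|^2$, and exchanges the events $\{\langle g,v\rangle<0\}$ and $\{\langle g,v\rangle>0\}$. Therefore the two expectations $\mathbb E[\|v\|^2\mathbf 1_{\{\langle g,v\rangle<0\}}]$ and $\mathbb E[\|v\|^2\mathbf 1_{\{\langle g,v\rangle>0\}}]$ coincide. Since $g\neq0$, the event $\{\langle g,v\rangle=0\}$ is a hyperplane of Lebesgue measure zero and hence has probability zero. The two expectations therefore sum to $\mathbb E\|v\|^2=1$, so each equals $\tfrac12$.

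There is no real obstacle here. The only points needing care are using $g\neq 0$ to discard the null event $\{\langle g,v\rangle=0\}$, and keeping track of the $1/p$ covariance scaling so that the constant comes out as $1/\sqrt{2\pi p}$ rather than $1/\sqrt{2\pi}$.
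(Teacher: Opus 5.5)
Your proposal is correct and follows essentially the same route as the paper: computing $\mathbb{E}\|v\|^2$ from the component variances, reducing $\langle g,v\rangle$ to a scalar $\mathcal{N}(0,\|g\|^2/p)$ and evaluating the half-normal moment $\sigma/\sqrt{2\pi}$, and using the $v\mapsto -v$ symmetry together with the null hyperplane $\{\langle g,v\rangle=0\}$ to split $\mathbb{E}\|v\|^2=1$ into two equal halves. Your plan mentions rotational invariance, but the argument you actually carry out only uses the fact that a linear functional of a Gaussian vector is Gaussian, exactly as the paper does.
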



\begin{proof}
Since $v \sim \mathcal{N}\!\left(0,\frac{1}{p}I_p\right)$, its components satisfy 
$v_i \sim \mathcal{N}\!\left(0,\frac{1}{p}\right)$ for $i=1,\ldots,p$. Hence,
\[
\mathbb{E}\!\left[\|v\|^2\right]
=
\mathbb{E}\!\left[\sum_{i=1}^p v_i^2\right]
=
\sum_{i=1}^p \mathbb{E}\!\left[v_i^2\right]
=
\sum_{i=1}^p \mathrm{Var}(v_i)
=
1.
\]

We next prove~\eqref{eq:descent_product}. Suppose 
$Z \sim \mathcal{N}(0,\sigma^2)$. Then,
\begin{align*}
\mathbb{E}[(-Z)_+]
&=
\frac{1}{\sqrt{2\pi\sigma^2}}
\int_{-\infty}^{0} (-x)\, e^{-\frac{x^2}{2\sigma^2}} \, dx =
\frac{1}{\sqrt{2\pi\sigma^2}}
\int_{0}^{+\infty} y\, e^{-\frac{y^2}{2\sigma^2}} \, dy
=
\frac{\sigma}{\sqrt{2\pi}},
\end{align*}
where the second equality follows by letting $y=-x$.
In our case,
$\langle g,v\rangle
=
\sum_{i=1}^p g_i v_i
\sim
\mathcal{N}\!\left(0,\frac{1}{p}\|g\|^2\right),$
and therefore $\sigma^2 = \|g\|^2/p$. Substituting into the previous expression 
yields
$\mathbb{E}\!\left[(-\langle g,v\rangle)_+\right]
=
\frac{1}{\sqrt{2\pi p}}\|g\|,$
which proves~\eqref{eq:descent_product}.

Finally, we prove~\eqref{eq:descent_norm}. Since $v$ is symmetric about the origin and 
$\|v\|^2$ is an even function, we have
\[
\mathbb{E}\!\left[
\|v\|^2 \mathbf{1}_{\{\langle g,v\rangle<0\}}
\right]
=
\mathbb{E}\!\left[
\|v\|^2 \mathbf{1}_{\{\langle g,v\rangle>0\}}
\right].
\]
Since $g\neq 0$, then $\mathbb{P}(\langle g,v\rangle=0)=0$, and thus
\[
\mathbf{1}_{\{\langle g,v\rangle<0\}}
+
\mathbf{1}_{\{\langle g,v\rangle>0\}}
=
1
\quad \text{a.s.}
\]
It follows that
\[
2\,\mathbb{E}\!\left[
\|v\|^2 \mathbf{1}_{\{\langle g,v\rangle<0\}}
\right]
=
\mathbb{E}\!\left[\|v\|^2\right]
=
1,
\]
which implies~\eqref{eq:descent_norm}.
\end{proof}

\Cref{lem:gaussian_one_sided} quantifies the expected improvement obtained by 
probing the function along a random Gaussian direction. 
\Cref{eq:descent_product} shows that the expected one-sided projection of $v$ 
against $g$ scales linearly with $\|g\|$, providing a descent signal of order 
$1/\sqrt{p}$. Noting that $\mathbf{1}_{\{\langle g_t,v_t\rangle<0\}}$ indicates 
whether $v_t$ is a descent direction for $g_t$, \eqref{eq:descent_norm} states 
that the squared norm of $v$, restricted to the event that $v$ is a descent 
direction for $g$, has expectation $\frac{1}{2}$. Equivalently, since that event 
has probability $\frac{1}{2}$, the conditional expectation is
$\mathbb{E}\!\left[\|v\|^2 \mid \langle g,v\rangle<0\right]=1$.




\bibliographystyle{siamplain-nodoi}
\bibliography{BIB/alias,BIB/full_GB,BIB/GB,BIB/references}

\end{document}